\documentclass{sn-jnl}

\usepackage{graphicx}%
\usepackage{multirow}%
\usepackage{amsmath,amssymb,amsfonts}%
\usepackage{amsthm}%
\usepackage{mathrsfs}%
\usepackage[title]{appendix}%
\usepackage{xcolor}%
\usepackage{textcomp}%
\usepackage{manyfoot}%
\usepackage{booktabs}%
\usepackage{subfig}
\usepackage{algorithm}%
\usepackage{algorithmicx}%
\usepackage{algpseudocode}%
\usepackage{listings}%
\usepackage{bm}
\usepackage{tikz-cd}
\usepackage{extarrows}

\newcommand{\m}{\textbf{-}}
\newcommand{\nm}[1]{\| #1 \|}
\newcommand{\bnm}[1]{\bigl\| #1 \bigr\|}
\newcommand{\Bnm}[1]{\Bigl\| #1 \Bigr\|}
\newcommand{\dt}{\,\mathrm{d}t}

\numberwithin{equation}{section}
\theoremstyle{thmstyleone}%
\newtheorem{theorem}{Theorem}
\newtheorem{proposition}[theorem]{Proposition}%
\newtheorem{lemma}{Lemma}[section]%
\newtheorem{assumption}{Assumption}

\theoremstyle{thmstyletwo}%
\newtheorem{remark}{Remark}%

\theoremstyle{thmstylethree}%
\newtheorem{definition}{Definition}%

\begin{document}

\title[Article Title]{Asymptotic Recovery in Fourier Spectral Methods for the Schr\"odinger Equation with Point Singularities}


\author[1]{\fnm{Yanjie} \sur{Li}}

\author[2]{\fnm{Sihong} \sur{Shao}}

\affil[1]{School of Mathematical Sciences, Peking University, Beijing 100871, China. Email: {\tt 2401110048@stu.pku.edu.cn}}
\affil[2]{ CAPT, LMAM and School of Mathematical Sciences, Peking University, Beijing 100871, China. Email: {\tt sihong@math.pku.edu.cn}}


\abstract{
\unboldmath
This paper studies the Fourier spectral method (FSM) for the Schr\"odinger equation with singular potentials $V \in H^{s}$, where $s > \max\{d/2-2,-1\}$ and $d$ denotes the spatial dimension. 
This setting includes a broad class of singular potentials, such as the 3D Coulomb potential and the 1D Dirac-delta potential.
First, we combine the Feshbach-Schur map with a refined perturbation argument to derive sharp convergence orders for FSM, yielding order $2s+2$ for eigenvalues and order $s+1$ for eigenfunctions in the $H^1$ norm. 
More importantly, the $H^1$ error with respect to the projected eigenfunction converges with a higher order $s+1+b$, where
$
b=\min\{s+2-d/2-\varepsilon,\; s+1,\; 2\}>0
$
for arbitrarily small $\varepsilon>0$, revealing a super-convergence phenomenon.
Second, in the presence of potentials with isolated point singularities, we develop an asymptotic-recovery (AR) technique to post-process the FSM solutions. The resulting method, dubbed AR-FSM, fully exploits the super-convergence property and achieves convergence orders $2s+2+2b$ for eigenvalues and $s+1+b$ for eigenfunctions in the $H^1$ norm, while the AR post-processing requires only a computational cost that is linear in the number of FSM degrees of freedom.
The analysis introduces a rigorous definition of point singularities and develops a foundational framework for their study. It further establishes an asymptotic expansion of eigenfunctions consisting of a regular component in $H^{s+4}$ together with $d+1$ asymptotic functions associated with each singular point. 
Numerical experiments confirm the sharpness of these theoretical bounds.
}

\keywords{singular potentials,
asymptotic recovery,
eigenvalue problem,
Fourier spectrum methods,
super-convergence}


\pacs[MSC Classification]{65N15, 65N35, 65T40, 35J10, 81Q05}

\maketitle

\section{Introduction}
Accurate calculation of eigenpairs of the Schr\"odinger operator is of broad scientific and engineering importance, motivating the development of numerous numerical schemes. In practice, however, physically relevant potentials often contain point singularities, such as the Coulomb potential in electronic systems or Dirac‑delta interactions that model point defects and impurities \cite{dean2021delta,holmer2007delta}. The presence of such singularities can cause standard numerical methods to break down or suffer from severely degraded performance. This limitation creates a strong demand for robust, high‑order accurate schemes capable of handling singular potentials efficiently.

Several approaches have been proposed to address the difficulty caused by singular potentials. 
Pseudo-potential methods \cite{troullier1991PP,kresse1994PP,ramer1999PP,hamann2013PP} seek to replace the singular potential with an effective one, thus lifting the regularity of the solution. 
Other strategies include the use of linear combinations of atomic orbitals \cite{bachmayr2014LCAO,scholz2017LCAO,li2017invpow}, as well as explicit correlation methods \cite{shiozaki2009EC,ten2012EC,kong2012EC,li2012EC,hattig2012EC}, which incorporate electron–electron correlations directly into the approximation space.

From the perspective of Fourier spectral methods (FSM), also known as plane-wave methods, the global Fourier basis offers several attractive features, including natural periodicity, exponential convergence for smooth problems, and efficient implementation via the Fast Fourier Transform (FFT). 
However, the presence of point singularities in the potential severely degrades the performance of FSM. 
The underlying difficulty stems from the cusp-like local behavior of the eigenfunction near each singular point. 
These cusps lead to slow decay of the Fourier coefficients, which limits the overall convergence order. To address this issue within the plane-wave framework, several hybrid approaches have been developed. 
Among them, the linearized augmented plane-wave method \cite{madsen2001LAPW,blugel2006LAPW} and the projector augmented-wave method \cite{blochl1994PAW,kresse1999PAW} are among the most widely used techniques in electronic structure calculations.

From a theoretical standpoint, it has been established in \cite{cances2010THM} that FSM achieves convergence of order $2s+2$ for eigenvalues and order $s+1$ for eigenfunctions in the $H^1$ norm when the potential satisfies $V\in H^{s}$ with $s>d/2$, where $d$ denotes the spatial dimension and $H^{s}$ is the periodic Sobolev space. The same convergence orders were shown for piecewise smooth potentials $V\in H^{1/2-\varepsilon}$ in two dimensions \cite{norton2010FSM-Theorem}. For singular potentials with $s\le d/2$, however, rigorous convergence analysis of FSM remains limited. To the best of our knowledge, \cite{dusson2023FSM-Theorem} provides one of the few available results in this direction, showing that FSM achieves convergence  of order $2r$ for eigenvalues and eigenfunctions in the $L^2$ norm, provided that $V$ acts as a bounded operator from $H^{1-r}$ to $H^{-1+r}$. However, these orders appear not to be sharp in general.

In this paper, we impose the following assumption on the potential. This assumption includes a wide range of singular potential including the 1D Dirac delta potential ($s=-0.5^{\m}$) as well as the 3D Coulomb potential ($s=0.5^{\m}$), where the notation $a^{\m}$ denotes $a-\varepsilon$ for arbitrarily small $\varepsilon>0$.
\begin{assumption}\label{Asp::FSM}
The potential satisfies $V \in H^s$ with $s > \max\{d/2-2,\,-1\}$.
\end{assumption}

Under this assumption, we adapt the Feshbach-Schur map  developed in \cite{dusson2023FSM-Theorem} to obtain sharp convergence orders for FSM. The main improvement in the analysis is a perturbation argument performed directly on the first $n$ eigenvalues, which allows full exploitation of the regularity of the corresponding eigenfunctions. As a result, FSM achieves convergence orders $2s+2$ for eigenvalues and $s+1$ for eigenfunctions in the $H^1$ norm. Moreover, the $H^1$ error with respect to the projected eigenfunction converges with order $s+1+b$, where $b=\min\{(s+2-d/2)^{\m},s+1,2\}$, revealing a super-convergence phenomenon. This result extends the analysis for regular potentials in \cite{cances2010THM,cances2018THM} to the singular setting, constituting the first contribution of this paper.

Motivated by the super-convergence property, we develop an asymptotic-recovery (AR) technique for potentials with point singularities, providing a post-processing procedure to recover the coefficients in the asymptotic expansion of eigenfunctions and thereby reconstruct the high-frequency content of the FSM solution. The resulting method, termed AR-FSM, fully exploits the super-convergence property and achieves convergence orders $2s+2+2b$ for eigenvalues and $s+1+b$ for eigenfunctions in the $H^1$ norm.
The analysis introduces a rigorous definition of point singularities and develops a foundational framework for their study. Within this framework, we establish an asymptotic expansion of eigenfunctions consisting of a regular component in $H^{s+4}$ together with $d+1$ asymptotic functions associated with each singular point. These results provide a precise description of the high-frequency behavior of the Fourier coefficients and are of independent interest beyond the AR-FSM framework. This constitutes the second contribution of this paper.

\subsection{Problem set up}
Consider the $d$-dimensional Schr\"odinger operator
\begin{equation}
\label{Eq-1.1-01}
\mathcal{H} := \mathcal{T}+\mathcal{V} =-\nabla^2_{\bm{x}} + V(\bm{x}),
\end{equation}
with a periodic potential $V$ on $\Omega = [0, 1]^d$, acting on the Hilbert space
\[L_{\text{per}}^2(\Omega):=\{ \psi(\bm{x})\in L_{\text{loc}}^2(\mathbb{R}^d) | \text{$\psi(\bm{x})$ is periodic on $\Omega$}\},\]
equipped with the inner product $\langle f, g \rangle := \int_{\Omega} \overline{f(\bm x)}{g(\bm x)}  \, dx$.
Under Assumption~\ref{Asp::FSM}, the operator $\mathcal{H}$ admits a non-decreasing sequence of eigenpairs $(\lambda_n, \psi_n) \in \mathbb{R} \times H^1$, $n \in \mathbb{N}^+$, with $\lambda_1 \leq \lambda_2 \leq \cdots \leq \lambda_n \leq\cdots  $, satisfying
\begin{equation}
\label{Eq-1.1-02}
\langle \phi, \mathcal{H} \psi_n \rangle = \lambda_n \langle \phi, \psi_n \rangle \quad \forall\, \phi \in H^1.
\end{equation}
Here, $H^s$ denotes the periodic Sobolev space of order $s \in \mathbb{R}$ with the norm
\begin{equation}
\label{Eq-1.1-03}
\| \psi \|_{H^s} :=\left(\sum_{\bm{k} \in \mathbb{Z}^d} (4\pi^2|\bm{k}|^2 + 1)^{s} |\hat{\psi}_{\bm{k}}|^2\right)^{1/2},
\end{equation}
where $\hat{\psi}_{\bm k}  = \langle e_{\bm k},\psi\rangle$ is the Fourier coefficients of $\psi$.  and $e_{\bm k}(\bm x) = e^{2\pi i \bm k\cdot \bm x}$
is the Fourier basis. The goal is to develop numerical methods for computing the eigenpairs $(\lambda_n, \psi_n)$ defined by \eqref{Eq-1.1-02}.  

Throughout this paper,  we write $A \lesssim B$ to denote that there exists a constant $C$ independent of $N$ and $\psi$ such that $A \le C B$.  
For simplicity, we also omit the domain $\Omega$ when referring to periodic function spaces (for instance, $H^s$ and $W^{t,\infty}$ in Lemma~\ref{Lem5.5}), whereas the domain is written explicitly for non-periodic function spaces (for instance, $H^s(\mathbb{R}^d)$ and $H^s(B_R)$ in Lemma~\ref{Lem6.3}).

\subsection{Fourier spectral method}
\label{subsec:FM}
FSM solves \eqref{Eq-1.1-02} by expanding the eigenfunction $\psi$ in a finite Fourier basis. 
For truncation parameter $N \in \mathbb{N}^+$, define the truncated index set and the corresponding finite-dimensional function space by
\[\mathscr{K}_N := \left\{ \bm k\in \mathbb{Z}^d\ | \ \| \bm k \|_{\infty}\leq N\right\},\quad \mathscr{X}_N:=\text{span}\{ e_{\bm k}|\ \bm k\in \mathscr{K}_N\}.\] 
Applying the Galerkin method to \eqref{Eq-1.1-02} over $\mathscr{X}_N$
leads to the algebraic eigenvalue problem: Find 
$(\lambda_n^N,\psi_n^N)\in \mathbb{R}\times \mathscr{X}_N$ such that
\begin{equation}
\label{Eq-1.2-01}
    \langle \phi, \mathcal{H}\psi_n^N\rangle
    = \lambda_n^N \langle \phi,\psi_n^N\rangle,
    \qquad \forall\, \phi \in \mathscr{X}_N.
\end{equation}
Let $\mathcal{P}_N$ denote the orthogonal projection onto $\mathscr{X}_N$, defined by
\[
\mathcal{P}_N \psi
:= \sum_{\bm k \in \mathscr{K}_N}
\langle e_{\bm k},\psi\rangle e_{\bm k}.
\]
Then \eqref{Eq-1.2-01} can equivalently be written as
$
\mathcal{P}_N \mathcal{H} \mathcal{P}_N \psi_n^N
=
\lambda_n^N \psi_n^N,
$
Our convergence analysis for FSM follows from the key lemma below.
\begin{lemma}\label{Lem1.1}
Under Assumption~\ref{Asp::FSM}, let
$
b = \min\{(s+2-d/2)^{\m},\, s+1,\, 2\}.
$
Then for fixed $n \in \mathbb{N}^+$ and sufficiently large $N$, we have 
\begin{equation}
\|\psi_n^N - \mathcal{P}_N \psi\|_{H^1}
\lesssim N^{-(s+1+b)}
\|\psi_n^N\|_{L^2},
\end{equation}
where $\psi \in \operatorname{ker}(\lambda_n - \mathcal{H})$ is an exact eigenfunction associated with $\lambda_n$.
\end{lemma}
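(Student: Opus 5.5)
The plan is to use the Feshbach--Schur reduction onto $\mathscr{X}_N$ and its complement, and then push the complement coupling through the regularity of $\psi$.

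\textbf{Setup.} Write $\mathcal{Q}_N = I-\mathcal{P}_N$ and fix a cluster of eigenvalues equal to $\lambda_n$ with spectral projector $\Pi$. The FSM pair $(\lambda_n^N,\psi_n^N)$ satisfies $\mathcal{P}_N(\mathcal{H}-\lambda_n^N)\psi_n^N=0$ with $\psi_n^N\in\mathscr{X}_N$. Conversely, the exact eigenfunction splits as $\psi=\mathcal{P}_N\psi+\mathcal{Q}_N\psi$ with
\[
\mathcal{Q}_N\psi = -R_N(\lambda_n)\,\mathcal{Q}_N\mathcal{V}\mathcal{P}_N\psi,
\qquad
R_N(\lambda):=(\mathcal{Q}_N\mathcal{H}\mathcal{Q}_N-\lambda)^{-1}\big|_{\mathcal{Q}_N}.
\]
Since $\mathcal{Q}_N\mathcal{T}\mathcal{Q}_N\gtrsim N^2$ and $\mathcal{V}$ is a small perturbation of $\mathcal{T}$ at high frequency (Assumption~\ref{Asp::FSM} gives $\mathcal{V}:H^{1}\to H^{-1+\delta}$ bounded for some $\delta>0$), $R_N(\lambda)$ exists for $\lambda$ near $\lambda_n$. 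Moreover, $\|R_N\|_{H^{-1+t}\to H^{1}}\lesssim N^{-t}$ for $t\in[0,2]$.

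\textbf{Reduced equation for $\mathcal{P}_N\psi$.} Substituting gives that $\mathcal{P}_N\psi$ solves
\[
\bigl(\mathcal{P}_N\mathcal{H}\mathcal{P}_N - \mathcal{P}_N\mathcal{V}R_N(\lambda_n)\mathcal{Q}_N\mathcal{V}\mathcal{P}_N-\lambda_n\bigr)\mathcal{P}_N\psi=0 .
\]
Hence $\mathcal{P}_N\psi$ is an exact eigenvector of $\mathcal{P}_N\mathcal{H}\mathcal{P}_N$ up to the residual
\[
r_N:=\mathcal{P}_N\mathcal{V}\mathcal{Q}_N\psi .
\]

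\textbf{Perturbation on the first $n$ eigenvalues.} Apply a Davis--Kahan type argument to $\mathcal{P}_N\mathcal{H}\mathcal{P}_N$ restricted to the low-lying spectral subspace. The eigenvalues of the discrete operator near $\lambda_n$ converge to $\lambda_n$, and the others stay separated by a gap uniform in $N$. With $\psi$ chosen as the exact eigenfunction whose projection best matches $\psi_n^N$, this yields
\[
\|\psi_n^N-\mathcal{P}_N\psi\|_{H^1}\lesssim \|r_N\|_{H^{-1}}+|\lambda_n^N-\lambda_n|\,\|\mathcal{Q}_N\psi\|_{\ldots}.
\]
The second term is lower order: $|\lambda_n^N-\lambda_n|\lesssim N^{-(2s+2)}$ follows from the standard variational bound using $\psi\in H^{s+2}$. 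The problem therefore reduces to bounding $\|\mathcal{P}_N\mathcal{V}\mathcal{Q}_N\psi\|_{H^{-1}}$.

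\textbf{Main obstacle: the bilinear estimate.} I need
\[
\|\mathcal{P}_N(V\,\mathcal{Q}_N\psi)\|_{H^{-1}}\lesssim N^{-(s+1+b)} .
\]
First, elliptic regularity gives $\psi\in H^{s+2}$, since $-\Delta\psi=(\lambda-V)\psi$ with $V\psi\in H^{s}$ whenever $s>d/2-2$. Hence $\|\mathcal{Q}_N\psi\|_{H^{\sigma}}\lesssim N^{\sigma-s-2}$ for $\sigma\le s+2$.

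The product estimate is where the three branches of $b$ appear.

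1. If $V\in H^{s}$ with $s>d/2$ is an algebra multiplier, only the factor $N^{-1}$ from the $H^{-1}$ norm on $\mathscr{X}_N$ is available, plus duality. The bound becomes
\[
\|\mathcal{P}_N(V f)\|_{H^{-1}}=\sup_{\phi\in\mathscr{X}_N}\frac{\langle V\phi, f\rangle}{\|\phi\|_{H^1}},
\]
and one places $V\phi$ in $H^{\min(s,\cdot)}$ and $f=\mathcal{Q}_N\psi$ in $H^{-\min(\cdot)}$.

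2. Counting exponents: pair $f\in H^{-t}$, which has size $N^{-t-s-2}$, against $V\phi\in H^{t}$. The multiplication $H^{1}\times H^{s}\to H^{t}$ requires
\[
t\le s,\qquad t\le 1,\qquad t< s+1-d/2 .
\]
This gives the exponent $s+2+t=s+1+b$ with $b=t+1$, i.e.\ $b\le\min\{s+1,\,2,\,(s+2-d/2)^{\m}\}$.

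The case $t<0$ (singular $V$) is handled by the same duality with negative $t$. I expect the sharp product rule in periodic Sobolev spaces with a negative-index factor to be the delicate step. I would prove it by a Littlewood--Paley or direct Fourier convolution bound, splitting frequencies of $\phi$ at $N/2$ so that $\mathcal{Q}_N$ forces high frequencies in $V$.

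Finally, normalize by $\|\psi_n^N\|_{L^2}$ and absorb constants, giving the bound in Lemma~\ref{Lem1.1}.
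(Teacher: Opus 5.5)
Your argument is correct, but it runs the perturbation in the opposite direction from the paper.

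\textbf{The paper's route.} It compares $\mathcal{S}_2=\mathcal{P}_N\mathcal{H}\mathcal{P}_N$ with the Feshbach--Schur operator $\mathcal{S}_1=\mathcal{P}_N(\mathcal{H}-\mathcal{F}_N(\lambda_n))\mathcal{P}_N$, whose $n$-th eigenpair is exactly $(\lambda_n,\mathcal{P}_N\psi)$. It evaluates the residual $\mathcal{P}_N\mathcal{F}_N(\lambda_n)\psi_n^N$ at the discrete eigenfunction. This buys two things for free: the monotonicity of $\sigma_j^N(\lambda^\star)$ in Lemma~\ref{Lem5.6} gives $\gamma_{n,\mathcal{S}_1}\ge\gamma_{n,\mathcal{H}}$, and $\mathcal{P}_{n,\mathcal{S}_1}\psi_n^N$ lies in $\mathcal{P}_N\ker(\lambda_n-\mathcal{H})$ automatically. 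The bound then comes from the two-factor estimate $\|\mathcal{P}_N^\perp\mathcal{V}v\|_{H^{-1}}\|\mathcal{P}_N^\perp\mathcal{V}\psi_n^N\|_{H^{-1}}\lesssim N^{-b}\|v\|_{H^1}\,N^{-(s+1)}\|\psi_n^N\|_{L^2}$. That estimate requires uniform $H^{s+2}$ regularity of the discrete eigenfunctions (the remark after Lemma~\ref{Lem5.4}).

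\textbf{Your route.} You instead evaluate the residual at the exact eigenfunction, $(\mathcal{S}_2-\lambda_n)\mathcal{P}_N\psi=-\mathcal{P}_N\mathcal{V}\mathcal{Q}_N\psi=(\mathcal{S}_2-\mathcal{S}_1)\mathcal{P}_N\psi$, and use the spectral gap of the FSM operator itself. This needs only the regularity of $\psi$ and a single pairing, $|\langle V\phi,\mathcal{Q}_N\psi\rangle|\le\|V\phi\|_{H^{b-1}}\|\mathcal{Q}_N\psi\|_{H^{1-b}}\lesssim N^{-(s+1+b)}\|\phi\|_{H^1}\|\psi\|_{H^{s+2}}$. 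Its first factor is exactly Lemma~\ref{Lem5.2} with $t=1-l=b-1$, so the cited product rule of Lemma~\ref{Lem5.1} suffices and no frequency splitting is needed.

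\textbf{What your route costs.} You must import standard Galerkin eigenvalue convergence ($\lambda_j^N\downarrow\lambda_j$ by min--max on the nested spaces $\mathscr{X}_N$) to get a gap around the cluster at $\lambda_n$ that is uniform in $N$. When $\lambda_n$ is multiple, you also need a dimension count. Let $\Pi_N$ be the spectral projector of $\mathcal{S}_2$ onto that cluster; show that $\psi\mapsto\Pi_N\mathcal{P}_N\psi$ is a bijection from $\ker(\lambda_n-\mathcal{H})$ onto the cluster space. Then $\psi_n^N=\Pi_N\mathcal{P}_N\psi$ for some $\psi$ with $\|\psi\|_{L^2}\lesssim\|\psi_n^N\|_{L^2}$, and this should replace your informal ``best-matching $\psi$''.

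\textbf{Minor points.}
\begin{itemize}
\item The resolvent $R_N$ and the reduced equation are unnecessary: $r_N$ follows directly from $\mathcal{P}_N(\mathcal{H}-\lambda_n)\psi=0$ and the fact that $\mathcal{T}$ commutes with $\mathcal{P}_N$.
\item The extra term $|\lambda_n^N-\lambda_n|\,\|\mathcal{Q}_N\psi\|$ does not arise in the residual argument. It would be harmless anyway, since $b\le s+1$.
\item $\psi\in H^{s+2}$ needs the bootstrap of Lemma~\ref{Lem5.4} starting from $H^1$, not a single elliptic step.
\end{itemize}
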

Based on this lemma, we prove the optimal convergence order of FSM.
\begin{theorem}[\textbf{Convergence order of FSM}] \label{Thm01}
Under Assumption~\ref{Asp::FSM}, for fixed $n \in \mathbb{N}^+$ and sufficiently large $N$, we have
\begin{equation}
\|\psi_n^N - \psi\|_{H^1} \lesssim N^{-(s+1)}\|\psi\|_{L^2},\qquad |\lambda_n^N - \lambda_n| \lesssim N^{-(2s+2)}.
\end{equation}
\end{theorem}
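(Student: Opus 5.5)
We derive Theorem~\ref{Thm01} from Lemma~\ref{Lem1.1} by combining it with the approximation properties of $\mathcal{P}_N$ and the regularity of eigenfunctions.

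\emph{Step 1: regularity.} We first record that every $\psi\in\ker(\lambda_n-\mathcal{H})$ lies in $H^{s+2}$. This follows from the eigen-equation $-\nabla^2\psi = (\lambda_n - V)\psi$ and a bootstrap. Under Assumption~\ref{Asp::FSM}, the multiplication $\psi\mapsto V\psi$ maps $H^{t}$ into $H^{\min\{s,t\}}$, or into a slightly weaker space when $s<d/2$. Starting from $\psi\in H^1$ and iterating, we obtain $\|\psi\|_{H^{s+2}}\lesssim\|\psi\|_{L^2}$. We then use the standard spectral estimate
\[
\|\psi-\mathcal{P}_N\psi\|_{H^1}\le (4\pi^2N^2+1)^{-(s+1)/2}\|\psi\|_{H^{s+2}}\lesssim N^{-(s+1)}\|\psi\|_{L^2}.
\]

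\emph{Step 2: eigenfunction bound.} We use the triangle inequality
\[
\|\psi_n^N-\psi\|_{H^1}\le \|\psi_n^N-\mathcal{P}_N\psi\|_{H^1}+\|\mathcal{P}_N\psi-\psi\|_{H^1}.
\]
By Lemma~\ref{Lem1.1} and Step~1, the right-hand side is bounded by $N^{-(s+1+b)}\|\psi_n^N\|_{L^2}+N^{-(s+1)}\|\psi\|_{L^2}$. It remains to compare $\|\psi_n^N\|_{L^2}$ with $\|\psi\|_{L^2}$. Here $\psi$ is the eigenfunction produced by the lemma for the given $\psi_n^N$, so the normalization is tied. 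Since $b>0$, for $N$ large we have
\[
\bigl|\|\psi_n^N\|_{L^2}-\|\psi\|_{L^2}\bigr|\le \|\psi_n^N-\mathcal{P}_N\psi\|_{L^2}+\|\psi-\mathcal{P}_N\psi\|_{L^2}\le \tfrac12\|\psi_n^N\|_{L^2}+CN^{-(s+1)}\|\psi\|_{L^2}.
\]
This gives $\|\psi_n^N\|_{L^2}\lesssim\|\psi\|_{L^2}$, and hence the first estimate of the theorem.

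\emph{Step 3: eigenvalue bound.} We use the standard Rayleigh-quotient identity. Since $\langle \psi_n^N,(\mathcal{H}-\lambda_n)\psi\rangle=0$ in the weak sense with test function $\psi_n^N\in H^1$, and the operator is symmetric, we get
\[
\lambda_n^N-\lambda_n=\frac{\langle \psi_n^N-\psi,(\mathcal{H}-\lambda_n)(\psi_n^N-\psi)\rangle}{\|\psi_n^N\|_{L^2}^2}.
\]
The numerator is controlled by boundedness of the form $a(u,v)=\langle u,\mathcal{H}v\rangle$ on $H^1\times H^1$. This boundedness holds because $V$ maps $H^1$ to $H^{-1}$: this is exactly the condition $s>\max\{d/2-2,-1\}$, via the product estimate $H^s\cdot H^1\subset H^{-1}$. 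Combined with Step~2, it yields
\[
|\lambda_n^N-\lambda_n|\lesssim\|\psi_n^N-\psi\|_{H^1}^2/\|\psi_n^N\|_{L^2}^2\lesssim N^{-(2s+2)}.
\]

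\emph{Main obstacle.} The main difficulty is the regularity statement $\psi\in H^{s+2}$ in Step~1 when $s<d/2$, notably for negative $s$ such as the Dirac delta. There one needs the product estimate $\|V\psi\|_{H^{s}}\lesssim\|V\|_{H^s}\|\psi\|_{H^{t}}$ for $t$ large enough, namely $t>d/2$ when $s\ge0$, or $t>d/2-s$ type duality when $s<0$. One must also check that the bootstrap terminates exactly at $s+2$. The margin $s+2>d/2$ provided by the assumption is what makes this work. If $V\psi$ only lands in $H^{s^{\m}}$, the resulting loss of $\varepsilon$ can be absorbed; alternatively, the lemma's positive $b$ absorbs the defect in the comparison.
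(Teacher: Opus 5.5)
Your proposal is correct and is essentially the paper's proof: the triangle inequality with Lemma~\ref{Lem1.1}, the truncation bound $\|\mathcal{P}_N^\perp\psi\|_{H^1}\lesssim N^{-(s+1)}\|\psi\|_{H^{s+2}}$ together with $\|\psi\|_{H^{s+2}}\lesssim\|\psi\|_{L^2}$, and then the Rayleigh-quotient identity bounded by $H^1$-continuity of the form. The regularity bound is the paper's Lemma~\ref{Lem5.4}, obtained by the bootstrap you describe using $\|\mathcal{V}\phi\|_{H^t}\lesssim\|\phi\|_{H^{t+l}}$ for $-1\le t\le s$ with loss $l=2-b<2$. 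Your explicit comparison of $\|\psi_n^N\|_{L^2}$ with $\|\psi\|_{L^2}$ also fills in a step the paper only asserts; the same inequality gives the reverse bound you use implicitly in Step~3.

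One caveat concerns your side remarks on regularity. The bootstrap gives exactly $H^{s+2}$, with no $\varepsilon$-loss. For $s<0$ the last step only needs $\psi\in H^{\tau}$ with $\tau>d/2$ and $\tau\ge -s$; your condition $\tau>d/2-s$ is stronger and would be unreachable below $s+2$ when, e.g., $d=1$ and $s\le -3/4$. This exactness is genuinely needed: an $\varepsilon$-loss would weaken the $\|\mathcal{P}_N^\perp\psi\|_{H^1}$ term, and it cannot be absorbed by $b$, which only improves the other term.
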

The proofs of Lemma~\ref{Lem1.1} and Theorem~\ref{Thm01} are deferred to Section~\ref{sec:proofFM}.

The estimate in Lemma~\ref{Lem1.1} implies a super-convergence of order $b$ for the eigenfunctions in the $H^1$ norm. 
This observation naturally suggests the use of post-processing techniques to exploit this super-convergence. 
In particular, two-grid methods \cite{cances2016post,cances2021post,dusson2021post2} have been proposed for this purpose. 
These approaches solve the problem on a coarse grid and then lift the solution to a finer grid, thereby reducing the computational cost of the eigensolver.  However, the overall computational complexity includes the cost of FFT performed on the finer Fourier grid. 
Consequently, while these methods improve efficiency, they do not fundamentally enhance the convergence order.

\subsection{An illustration of AR} %
\begin{figure}[t]
\centering
\subfloat[$\psi_{\mathrm{num}}=\mathcal{P}_N\psi$]{\label{fig:1a}\includegraphics[width=0.45\linewidth]{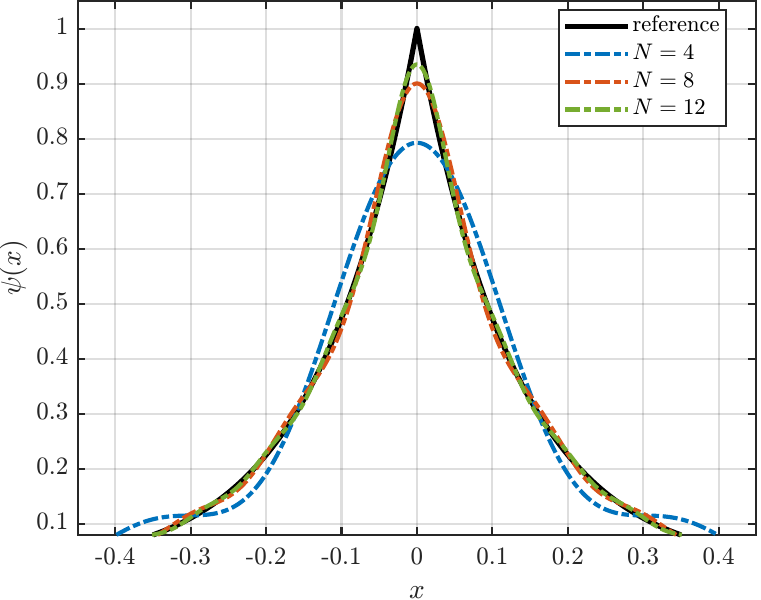}}\quad
\subfloat[$\psi_{\mathrm{num}}=\mathcal{A}_N\mathcal{P}_N\psi$]{\label{fig:1b}\includegraphics[width=0.45\linewidth]{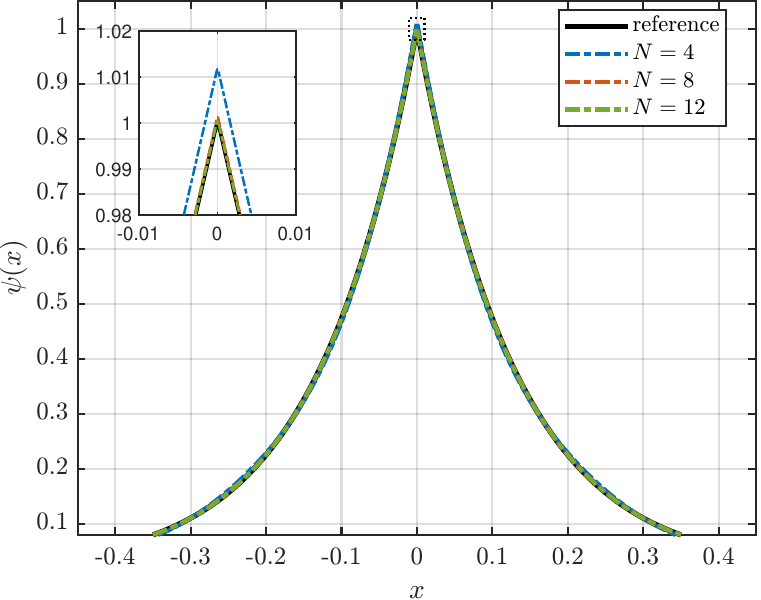}}
\caption{
Numerical eigenfunctions of the one-dimensional Schr\"odinger equation with a Dirac-delta potential $V(x) = -15\delta(x)$.
Figure (a) shows the Fourier projection of $\psi$, while figure (b) displays the result after applying the AR post-processing \eqref{Eq-1.3-04} to the Fourier projection.
}
\label{fig:01}
\end{figure}

The proposed AR post-processing is based on decomposing the eigenfunction into the low and high-frequency components, in the spirit of two-grid methods. However, instead of representing the high-frequency part on a finer Fourier grid, we approximate it using the intrinsic asymptotic behavior of the eigenfunctions. To illustrate this idea, we provide a simple yet representative example. Consider the one‑dimensional Schr\"odinger operator with Dirac-delta potential $V(x) = \gamma\delta(x)$. The weak form of the eigenvalue problem \eqref{Eq-1.1-02} then reads
\begin{equation}\label{Eq-1.3-01}
    \langle \phi,\mathcal{T}\psi\rangle +\gamma\overline{\phi(0)}\psi(0)  = \lambda \langle \phi,\psi\rangle , \quad \forall\, \phi \in H^{1}.
\end{equation}

Substituting the test function $\phi(x) = e_{k}(x)$ into the weak formulation \eqref{Eq-1.3-01} gives
\begin{equation}\label{Eq-1.3-02}
    \hat{\psi}_k = \frac{-\gamma\psi(0)}{4\pi^2 k^2-\lambda}.
\end{equation}
This relation reveals the leading asymptotic decay $\hat{\psi}_k \sim-\gamma\psi(0)/4\pi^2k^2$ of the eigenfunction $\psi(x)$. To match this decay, we introduce the asymptotic function
\begin{equation}\label{Eq-1.3-03}
\Phi(x) = 1+\gamma\frac{\sin(\pi|x|)}{2\pi} 
\quad\Longleftrightarrow\quad
\hat{\Phi}_k = \delta_{0k}+\frac{-\gamma}{4\pi^2k^2-\pi^2},
\end{equation}
whose Fourier coefficients reproduce the same $k^{-2}$ asymptotic decay. 
Let $\psi^N \in \mathscr{X}_N$ be a numerical solution. 
We define the AR operator $\mathcal{A}_N$ by introducing a high-frequency correction
\begin{equation}\label{Eq-1.3-04}
\mathcal{A}_N \psi^N := \psi^N + \beta \mathcal{P}_N^\perp \Phi = \underbrace{\psi^N-\beta\mathcal{P}_N\Phi}_{\in\mathscr{X}_N} +\beta\Phi,
\end{equation}
where $\mathcal{P}_N^{\perp} = 1-\mathcal{P}_N$ is the projection operator onto high-frequency space. The coefficient $\beta$ is then determined by substituting $\psi = \mathcal{A}_N \psi^N$ into \eqref{Eq-1.3-02} and matching coefficient of the $k^{-2}$ decay,
\begin{equation}\label{Eq-1.3-05}
\beta = [\mathcal{A}_N \psi](0)
= \left[\psi^N - \beta \mathcal{P}_N \Phi\right](0) + \beta
\quad \Longrightarrow \quad
\left[\psi^N - \beta \mathcal{P}_N \Phi\right](0) = 0.
\end{equation}
 This forms a linear equation for $\beta$, where the coefficients can be evaluated within the complexity $\mathcal{O}(N)$. As presented in Figure~\ref{fig:01}, this simple correction yields an evident improvement in the post-processed eigenfunction. 

\begin{table}[t]
\centering
\footnotesize
\caption{Theoretical orders of convergence for eigenvalues and eigenfunctions in the $H^1$ norm for FSM and AR-FSM. 
The notation $a^{\m}$ denotes $a-\varepsilon$ for arbitrarily small $\varepsilon>0$. 
}
\label{tab:01}
\begin{tabular}{c cc cc}
\toprule
& \multicolumn{2}{c}{FSM} & \multicolumn{2}{c}{AR-FSM} \\
\cmidrule(lr){2-3} \cmidrule(lr){4-5}
Potential  
& $H^1$ & Energy 
& $H^1$ & Energy \\
\midrule
$V\in H^{s}$ & $s+1$ & $2s+2$ & $s+1+b$ & $2s+2+b$ \\
\addlinespace
$d=1,\ V_{\mathrm{lc}}=\gamma\delta(x)$ & $0.5^{\m}$ & $1.0^{\m}$ & $1.0^{\m}$ & $2.0^{\m}$ \\
\addlinespace
$d=3,\ V_{\mathrm{lc}}\sim \gamma/|\bm x|$ & $1.5^{\m}$ & $3.0^{\m}$ & $2.5^{\m}$ & $5.0^{\m}$ \\
\bottomrule
\end{tabular}
\end{table}
This example illustrates how the AR technique reconstruct the high-frequency components and improve the accuracy of the FSM solution. 
This observation extends to a general class of potentials with isolated point singularities. 
In particular, the AR post-processing technique fully exploits the super-convergence of FSM implied by Lemma~\ref{Lem1.1}. 
We summarize the resulting convergence properties as follows.
\begin{theorem}[\bf Convergence order of AR-FSM]\label{Thm02}
  Let $V$ satisfy Assumption~\ref{Asp::ARFSM}. Then the estimates for the post-processed eigenfunction $\tilde{\psi}_n^N:=\mathcal{A}_N\psi^N_n$ and the eigenvalue $\tilde{\lambda}_n^N:=\langle\tilde{\psi}_n^N,\mathcal{H}\tilde{\psi}_n^N\rangle/\langle\tilde{\psi}_n^N,\tilde{\psi}_n^N\rangle$ hold
\begin{equation}
\|\tilde{\psi}_n^N - \psi\|_{H^1} \lesssim N^{-(s+1+b)}\|\psi\|_{L_2},\qquad |\tilde{\lambda}_n^N - \lambda_n| \lesssim N^{-(2s+2+2b)}.
\end{equation}
\end{theorem}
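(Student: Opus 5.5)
The strategy is to split the error of the post-processed eigenfunction into a low-frequency part, controlled by Lemma~\ref{Lem1.1}, and a high-frequency part, controlled by the asymptotic expansion of eigenfunctions near the point singularities. The eigenvalue estimate will then follow from the $H^1$ estimate through the standard Rayleigh-quotient identity. Concretely, I would use the expansion that the paper announces under Assumption~\ref{Asp::ARFSM}:
\[
\psi=\psi_{\mathrm{reg}}+\sum_{j}\sum_{\ell=0}^{d}\beta_{j\ell}\Phi_{j\ell},
\qquad \psi_{\mathrm{reg}}\in H^{s+4}.
\]
Here $\Phi_{j\ell}$ are the $d+1$ asymptotic functions attached to the singular point $\bm x_j$, and $\beta_{j\ell}$ are linear functionals of $\psi$ (values or gradients of a smoothed part of $\psi$ at $\bm x_j$), as in the 1D model \eqref{Eq-1.3-05}. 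By construction, the AR operator $\mathcal{A}_N$ computes coefficients $\tilde\beta_{j\ell}$ from $\psi_n^N$ and returns $\tilde\psi_n^N=\psi_n^N+\sum\tilde\beta_{j\ell}\mathcal{P}_N^\perp\Phi_{j\ell}$.

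With this expansion the error decomposes as
\[
\tilde\psi_n^N-\psi=(\psi_n^N-\mathcal{P}_N\psi)+\sum_{j,\ell}(\tilde\beta_{j\ell}-\beta_{j\ell})\mathcal{P}_N^\perp\Phi_{j\ell}-\mathcal{P}_N^\perp\psi_{\mathrm{reg}}.
\]
I would bound the three terms as follows.
\begin{itemize}
\item The first term is $O(N^{-(s+1+b)})$ in $H^1$ by Lemma~\ref{Lem1.1}.
\item The third term is $O(N^{-(s+3)})$ by the $H^{s+4}$ regularity of $\psi_{\mathrm{reg}}$, and $s+3\ge s+1+b$ since $b\le 2$.
\item For the middle term, each $\Phi_{j\ell}$ has the singular regularity of $\psi$ (for $\ell=0$ it is like $\psi$ itself, in $H^{s+2^{\m}}$ roughly), so $\|\mathcal{P}_N^\perp\Phi_{j\ell}\|_{H^1}\lesssim N^{-(s+1)}$. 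Higher-order functions ($\ell\ge1$) decay faster.
\end{itemize}
It therefore suffices to show the coefficient error $|\tilde\beta_{j\ell}-\beta_{j\ell}|\lesssim N^{-b}$, or more precisely $N^{-b}$ times the appropriate factor for each $\ell$.

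This coefficient estimate is the main obstacle. The coefficients are fixed by a finite linear system, the analogue of $[\psi^N-\beta\mathcal{P}_N\Phi](0)=0$. Subtracting the identity satisfied by the exact $\beta$, namely $[\mathcal{P}_N\psi-\beta\mathcal{P}_N\Phi](0)=[\mathcal{P}_N\psi_{\mathrm{reg}}](0)$ plus corrections, reduces the problem to bounding point evaluations of $\psi_n^N-\mathcal{P}_N\psi$ and of $\mathcal{P}_N^\perp\psi_{\mathrm{reg}}$ (or their gradients, for $\ell\ge1$).
\begin{itemize}
\item For $\psi_n^N-\mathcal{P}_N\psi\in\mathscr{X}_N$, I would use the inverse inequality $\|v\|_{L^\infty}\lesssim N^{d/2-1}\|v\|_{H^1}$ together with Lemma~\ref{Lem1.1}. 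This gives $N^{-(s+2-d/2)-b}$, which should be $\lesssim N^{-b}$ relative to the normalisation of $\Phi_{j\ell}$. That normalisation is the point I would need to check, since the leading coefficient of $\Phi$ is $O(1)$ and $\mathcal{P}_N^\perp\Phi$ already carries $N^{-(s+1)}$.
\item For $\mathcal{P}_N^\perp\psi_{\mathrm{reg}}$, Sobolev embedding gives the tail bound $N^{-(s+4-d/2)^{\m}}$.
\end{itemize}
I also need the system matrix, built from $\delta_{\ell\ell'}-[\mathcal{P}_N\Phi_{j\ell'}]$ evaluated at the points, to be uniformly invertible for large $N$. I would establish this by showing it converges to a nonsingular limit, since $\mathcal{P}_N\Phi\to\Phi$ at the points after removal of the normalized singular part. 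If the naive bound loses powers, I would refine by weighting the $\ell\ge1$ functions with their faster decay.

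For the eigenvalue, $\tilde\lambda_n^N-\lambda_n=\langle e,(\mathcal{H}-\lambda_n)e\rangle/\|\tilde\psi_n^N\|^2$ with $e=\tilde\psi_n^N-\psi$, after choosing $\psi$ as the eigenfunction whose projection is closest. Boundedness of $\mathcal{H}-\lambda_n:H^1\to H^{-1}$ under Assumption~\ref{Asp::FSM} then gives $|\tilde\lambda_n^N-\lambda_n|\lesssim\|e\|_{H^1}^2\lesssim N^{-(2s+2+2b)}$. For degenerate $\lambda_n$, the same argument applies with $\psi$ taken in the eigenspace as in Lemma~\ref{Lem1.1}.
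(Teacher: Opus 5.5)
Your plan is essentially the paper's proof. There, Theorem~\ref{Thm02} is obtained by combining Lemma~\ref{Lem1.1} with Theorem~\ref{Thm03}, whose proof is exactly your coefficient-error analysis: uniform invertibility of the point-evaluation system, an inverse estimate for $\psi_n^N-\mathcal{P}_N\psi\in\mathscr{X}_N$, and the $H^{s+4}$ tail of $\psi^{\geq 2}$. This is followed by the same Rayleigh-quotient bound for $\tilde\lambda_n^N$.

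Two small corrections. First, the weighting you keep in reserve is actually required. Lemma~\ref{Lem5.5}~(iii) bounds $\nabla\mathcal{P}_N\Phi_{q,\bm 0}$ at the singular points only by $N^{\max\{\varepsilon,\,d/2-s-1\}}$, so the unscaled matrix need not converge. The paper rescales unknowns and equations by $N^{-|\bm\alpha|}$ to obtain $\hat\Phi=I+o(1)$. Second, on $\mathscr{X}_N$ the inverse inequality should read $\|v\|_{L^\infty}\lesssim N^{\max\{d/2-1,\,\varepsilon\}}\|v\|_{H^1}$, not $N^{d/2-1}\|v\|_{H^1}$, which is false for $d=1$. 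The corrected bound still suffices because $s>d/2-2$ and $s+1>0$.
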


While FSM achieves convergence orders of $s+1$  
eigenfunctions in the $H_1$ norm and $2s+2$ for eigenvalues, AR-FSM enhances these to $s+1+b$ and $2s+2+2b$, as stated in Theorem~\ref{Thm02}. The additional computational cost in the post-processing is dominated by the assembly of a linear system for recovering the asymptotic coefficients, and scales linearly with the number of FSM degrees of freedom.
A summary of the convergence orders for representative singular potentials is presented in Table~\ref{tab:01}.

The remainder of this paper is organized as follows. 
In Section~\ref{sec:expansion}, we introduce a formal definition of point singularities and derive asymptotic expansions of eigenfunctions. 
These expansions provide the foundation for the AR post-processing, which is detailed in Section~\ref{sec:postproc}. 
Numerical experiments are presented in Section~\ref{sec:experiments}. 
The proofs of the theoretical results are deferred to Sections~\ref{sec:proofFM} and \ref{sec:proofAR}. 
Finally, Section~\ref{sec:conclusions} concludes the paper and discusses some future directions.

\section{Expansion of eigenfunctions in point-singular potential}
\label{sec:expansion}
This section is devoted to deriving an expansion of eigenfunction $\psi$ of the form
\begin{equation}\label{Eq-2.0-01}
\psi =
\underbrace{\psi^{(0)}}_{\in H^{s+2}}
+
\underbrace{\psi^{(1)}}_{\in H^{s+3}}
+
\underbrace{\psi^{\geq 2}}_{\in H^{s+4}}.
\end{equation}
We retain only the first two orders of the expansion, as this is sufficient for the post-processing of FSM, for which the super-convergence gain $b \leq 2$ according to Lemma~\ref{Lem1.1}.

We begin by introducing a formal definition of point singularities, followed by the asymptotic expansion of eigenfunction.

\subsection{Potential with point singularities}
Let $V$ be a potential with a point singularity at $\bm x=\bm 0$.  Intuitively, the asymptotic decay in Fourier coefficients of $\mathcal{{V}}\psi$ is governed by the local behavior
of $\psi$ near $\bm x=\bm 0$. More specifically, for a function $\psi$ with sufficient regularity to admits a Taylor expansion, the action of $V$ on monomials determines the high-frequency asymptotic decay of $V\psi$. 
We therefore adopt the following definition.
\begin{definition}[Periodic point‑singular Sobolev space]\label{Def01}
   Let $s\in\mathbb R$ and $r\in\mathbb N$. 
    a function $V$ is said to belong to $PSH^{s,r}$, if and only if, there exists a compactly supported function $V_{\mathrm{lc}} $ such that
    \begin{equation}\label{Eq-defPSH-01}
        V(\bm{x})
        =
        \sum_{\bm n \in \mathbb{Z}^d}
        V_{\mathrm{lc}}(\bm{x}+\bm n),
    \end{equation}
    and  for every multi-index  $\bm{\alpha} \in \mathbb{N}^{d}$ with $|\bm{\alpha}| \le r$, 
    \begin{equation}\label{Eq-defPSH-02}
    \bm x^{\bm \alpha}V_{\mathrm{lc}}(\bm x) \in H^{s+|\bm \alpha|}(\mathbb{R}^d),
    \end{equation}
    where $\bm x^{\bm \alpha} := \prod_{i=1}^d x_i^{\alpha_i}$ and $|\bm \alpha| := \sum_{i=1}^{d}\alpha_i$. 
\end{definition}

\begin{remark}
        The requirement that $V_{\mathrm{lc}}$ be compactly supported is sufficient to ensure the convergence of the periodic sum in \eqref{Eq-defPSH-01}. Long-range potentials, such as the logarithmic potentials $\log |\bm x|$ and Coulomb potentials $1/|\bm x|$, also fall within this framework in the sense that their short-range components in the Ewald decomposition \cite{toukmaji1996ewald} satisfy this compact-support condition.
\end{remark}

\begin{remark}
    In Fourier representation, the multiplication by $\bm x^{\bm \alpha}$ translates into the differential operator $(2\pi i)^{-|\bm \alpha|} \partial_{\bm k}^{\bm \alpha}$. Equivalently, \eqref{Eq-defPSH-02} implies that the Fourier coefficients of  $V_{\mathrm{lc}}$ exhibit accelerated decay after applying the partial derivative in $\bm k$-space. Consequently, the space  $PSH^{s,r}$ includes functions in $H^{s}$ whose Fourier coefficients behave like $|\bm k|^{-\gamma}$ for some $\gamma \in \mathbb{R}$. Typical singular potentials, such as the Coulomb potential, the Dirac‑delta potential, and the logarithmic potential, all belong to this class.
\end{remark}

In general, we allow the potential to possess isolated point singularities, which are specified in the following assumption.
\begin{assumption}\label{Asp::ARFSM}
Let $s > \max\left\{ d/2 - 2, -1 \right\}$. The potential $V(x)$ admits a decomposition of the form
\begin{equation}
V(\bm{x}) = V_{0}(\bm{x}) + \sum_{p=1}^{n_p} V_{p}(\bm{x} - \bm{x}_p),
\end{equation}
where $V_{0}\in H^{s+2}$, $V_{p}(\cdot-\bm x_p)\in PSH^{s,2}$,
and $r_m=\min_{p\neq q}|\bm x_p-\bm x_q|>0$.
Accordingly, we decompose potential operator as $ \mathcal{V}=\mathcal{V}_{0}+\sum_{p=1}^{n_p}\mathcal{V}_p.$
\end{assumption}

\subsection{Asymptotic expansion} 
Assumption~\ref{Asp::ARFSM} enables an expansion of $\mathcal{V}\psi$ based on the Taylor expansion of $\psi$ around each singular point $\bm x_p$. However, the eigenfunction $\psi$ may not possess sufficient regularity to admit such expansion. Therefore, we assume that the $r$-th order term of the expansion admits
\begin{equation}\label{Eq-2.2-01}
    \psi^{(r)} := \sum_{p=1}^{n_p}\sum_{|\bm \alpha|=r} \beta_{p,\bm \alpha}\Phi_{p,\bm \alpha},\quad r\in\{0,1\},
\end{equation}
where the functions $\Phi_{p,\bm \alpha}$ are periodic, low-regularity analogues of $(\bm x - \bm x_p)^{\bm \alpha}$ associated with $V_p$, and $\beta_{p,\bm \alpha}$ are the expansion coefficients. We require each $\Phi_{p,\bm \alpha}$ approximately solves single potential problem and satisfies
\begin{equation}
\label{Eq-2.2-02}
    \bigl(\mathcal{T}+\mathcal{V}_p\bigr)\Phi_{p,\bm\alpha} :=
    R_{p,\bm \alpha}\in H^{s+2},\quad
    \mathcal{D}^{\bm \alpha'}_{\bm x_p} [\Phi_{p,\bm \alpha}] = \delta_{\bm \alpha',\bm \alpha},\quad
    \forall\, |\bm \alpha'|\leq |\bm \alpha|\leq 1,
\end{equation}
where $\mathcal{D}^{\bm \alpha}_{\bm x_p}$ denote the linear functional that extracts the derivative of order $\bm \alpha$ at $\bm x_p$.

To further determine the coefficients $\beta_{p,\bm \alpha}$, we substitute the expansion \eqref{Eq-2.0-01} and \eqref{Eq-2.2-01} into the eigenvalue equation $(\mathcal{H}-\lambda)\psi = 0$,  which yields
\begin{equation}
\label{Eq-2.2-03}
    \mathcal{T}\psi^{\geq 2} + \sum_{p=1}^{n_p}\mathcal{V}_p\bigl(\psi - \sum_{|\bm \alpha|\leq 1}\beta_{p,\bm \alpha}\Phi_{p,\bm \alpha} \bigr) +R=0,
\end{equation}
where the total residual is given by
\begin{equation}\label{Eq-2.2-04}
R = \sum_{p=1}^{n_p}\sum_{|\bm \alpha|\leq 1}\beta_{p,\bm \alpha}R_{p,\bm \alpha}+\bigl(\mathcal{V}_{0}-\lambda\bigr)\psi \in H^{s+2}.     
\end{equation}
 The $H^{s}$ and $H^{s+1}$ components in left-hand-side of \eqref{Eq-2.2-03} vanish. In view of condition \eqref{Eq-defPSH-02}, each singular potential $V_p(\bm x)$, and hence the operator $\mathcal{V}_p$, acts on a function whose value and first-order derivatives vanish at $\bm x_p$, which implies
\begin{equation}\label{Eq-2.2-05}
    \mathcal{D}^{\bm \alpha'}_{\bm x_p} \Bigl(\psi - \sum_{|\bm \alpha|\leq 1}\beta_{p,\bm \alpha}\Phi_{p,\bm \alpha} \Bigr) =0,\quad  \forall\, |\bm \alpha'|\leq 1.
\end{equation}
By \eqref{Eq-2.2-02},  \eqref{Eq-2.2-05} is equivalent to
\begin{equation}\label{Eq-2.2-06}
    \beta_{p,\bm 0} = \mathcal{D}^{\bm 0}_{\bm x_p}[\psi],\qquad
    \beta_{p,\bm \alpha}=\mathcal{D}^{\bm \alpha}_{\bm x_p}\bigl[\psi-\beta_{p,\bm 0}\Phi_{p,\bm 0}\big],\quad \forall\, |\bm \alpha|=1.
\end{equation}
This implies that the coefficients are invariant  with respect to the expansion order. 

The proceeding argument is formalized in Lemma~\ref{Lem2.1}, whose proof is deferred to Section~\ref{subsec:AR3}.
\begin{lemma}
\label{Lem2.1}
    Under Assumption~\ref{Asp::ARFSM}, the eigenfunction $\psi$ of $\mathcal{H}$ admits an expansion of the form \eqref{Eq-2.0-01}. Specifically, there exists $\Phi_{p,\bm \alpha}$ satisfying \eqref{Eq-2.2-02}, and each expansion term  admits the representation \eqref{Eq-2.2-01}, where the coefficients $\beta_{p,\bm \alpha}$ are determined by \eqref{Eq-2.2-06}. Moreover,
     \begin{equation}
     \|\psi^{(0)}\|_{H^{s+2}}+ 
     \|\psi^{(1)}\|_{H^{s+3}}+ 
     \|\psi^{\geq 2}\|_{H^{s+4}} \lesssim \|\psi\|_{L^2}.
     \end{equation}
\end{lemma}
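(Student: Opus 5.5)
The plan is to construct the functions $\Phi_{p,\bm\alpha}$ explicitly, then peel off the singular part of $\psi$ one order at a time, using elliptic regularity of $\mathcal{T}$ (shifted by a constant to handle the zero mode) together with the moment conditions \eqref{Eq-defPSH-02}.

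\emph{Baseline regularity.} We first record the regularity of $\psi$ and of multiplication by $V$. From Assumption~\ref{Asp::FSM} and the eigenvalue equation, a bootstrap gives $\psi\in H^{s+2}$ with $\|\psi\|_{H^{s+2}}\lesssim\|\psi\|_{L^2}$. Two mapping properties drive this and the later steps. First, $V\in H^s$ sends $H^{t}$ to $H^{s}$ for $t>d/2$ (and $t\ge -s$ when $s<0$). Second, because $s+2>d/2$, the function $\psi$ is continuous, and when $s+3>d/2+1$ it is $C^1$. The point evaluations $\mathcal{D}^{\bm\alpha}_{\bm x_p}$ in \eqref{Eq-2.2-06} are therefore bounded on $H^{s+2}$ for $|\bm\alpha|=0$. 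For $|\bm\alpha|=1$ they are bounded on $H^{s+3}$; note that $\psi-\beta_{p,\bm 0}\Phi_{p,\bm 0}$ will be shown to lie in $H^{s+3}$ near $\bm x_p$.

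\emph{Construction of $\Phi_{p,\bm\alpha}$.} Take a smooth cutoff $\chi_p$ equal to $1$ near $\bm x_p$, supported in a ball of radius less than $r_m/2$. Define $\Phi_{p,\bm\alpha}=\chi_p(\bm x-\bm x_p)^{\bm\alpha}+w_{p,\bm\alpha}$, where $w_{p,\bm\alpha}$ solves
\[
(\mathcal{T}+c)\,w_{p,\bm\alpha}=-\mathcal{V}_p\bigl[\chi_p(\bm x-\bm x_p)^{\bm\alpha}\bigr],
\]
with a constant $c>0$. By \eqref{Eq-defPSH-02}, the right-hand side lies in $H^{s+|\bm\alpha|}$, so $w_{p,\bm\alpha}\in H^{s+2+|\bm\alpha|}$. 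The residual is
\[
R_{p,\bm\alpha}=\mathcal{T}\bigl[\chi_p(\bm x-\bm x_p)^{\bm\alpha}\bigr]-c\,w_{p,\bm\alpha}+\mathcal{V}_p w_{p,\bm\alpha}.
\]
The first term is smooth and the second lies in $H^{s+2}$. The third term is the delicate one: it lies in $H^{s+2}$ only if $\mathcal{V}_p$ gains enough on $w$. To handle it, we iterate the construction a finite number of times (a Neumann-type correction), each time subtracting the Taylor data of $w$ at $\bm x_p$ and using \eqref{Eq-defPSH-02} with $|\bm\alpha|\le 2$. Finally, we renormalize by a triangular linear combination so that $\mathcal{D}^{\bm\alpha'}_{\bm x_p}\Phi_{p,\bm\alpha}=\delta_{\bm\alpha',\bm\alpha}$. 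This is possible because the perturbation $w$ has small or well-defined point values.

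\emph{Expansion of $\psi$.} Define the $\beta_{p,\bm\alpha}$ by \eqref{Eq-2.2-06} and set $\psi^{\ge2}=\psi-\psi^{(0)}-\psi^{(1)}$. Then \eqref{Eq-2.2-03} holds, and we need the key lemma:
\[
u\in H^{t},\quad \mathcal{D}^{\bm\alpha'}_{\bm x_p}u=0 \text{ for all }|\bm\alpha'|\le 1 \quad\Longrightarrow\quad \mathcal{V}_p u\in H^{s+2}.
\]
To prove it, split $u$ near $\bm x_p$ via a Taylor expansion. A function vanishing to second order behaves like a combination $\sum_{|\bm\alpha|=2}(\bm x-\bm x_p)^{\bm\alpha}g_{\bm\alpha}$ with $g_{\bm\alpha}$ of appropriate regularity. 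By \eqref{Eq-defPSH-02} with $|\bm\alpha|=2$, each piece $(\bm x-\bm x_p)^{\bm\alpha}V_p\in H^{s+2}$, and multiplying by the smooth enough $g_{\bm\alpha}$ preserves this. Away from $\bm x_p$, the potential $V_p$ is absent because of its compact support, or is smooth there. Applying the lemma and inverting $\mathcal{T}$ (up to the constant mode) in \eqref{Eq-2.2-03} gives $\psi^{\ge2}\in H^{s+4}$. This has to be done in two bootstrap stages: first show that $\psi-\psi^{(0)}\in H^{s+3}$ using only the vanishing of the value, and then use that extra smoothness to make the first-derivative functionals meaningful. The norm bounds follow by tracking constants, since $|\beta_{p,\bm\alpha}|\lesssim\|\psi\|_{L^2}$ by the bounded point evaluations.

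\emph{Main obstacle.} The hard step is the key lemma: making rigorous that second-order vanishing plus \eqref{Eq-defPSH-02} yields $H^{s+2}$, with $u$ only in a fractional Sobolev space and $s$ possibly negative. I would try to do this on the Fourier side, where multiplication by $\bm x^{\bm\alpha}$ becomes $\partial_{\bm k}^{\bm\alpha}$. The product $\widehat{V_p u}$ is a convolution $\hat V*\hat u$. Subtracting the Taylor terms corresponds to subtracting $\hat V(\bm k)\sum_{\bm m}\hat u_{\bm m}$ and its first moments. A Taylor remainder in $\bm k$ of $\hat V_{\mathrm{lc}}$ (integral form, on the continuous transform of $V_{\mathrm{lc}}$) then bounds the convolution by second $\bm k$-derivatives. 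These are controlled by $\|\bm x^{\bm\alpha}V_{\mathrm{lc}}\|_{H^{s+2}}$.
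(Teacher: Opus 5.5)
Your architecture is the paper's. You build $\Phi_{p,\bm\alpha}$ by a Neumann-type iteration that strips Taylor data at $\bm x_p$ (Lemma~\ref{Lem6.5}, where the smooth periodic $m_{\bm\alpha}$ plays the role of your $\chi_p(\bm x-\bm x_p)^{\bm\alpha}$). You then run a two-stage bootstrap ($r=1$, then $r=2$) with localization near each singular point. The genuine gap is in the key lemma. The mechanism you propose for it yields \emph{no} derivative gain, so neither your iteration for $\Phi_{p,\bm\alpha}$ nor the bootstrap for $\psi^{\geq2}$ closes.

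Concretely, take $u\in H^{\sigma}$ vanishing to second order at $\bm x_p$ and write $u=\sum_{|\bm\alpha|=2}(\bm x-\bm x_p)^{\bm\alpha}g_{\bm\alpha}$. In general $g_{\bm\alpha}$ is only in $H^{\sigma-2}$, since the integral remainder loses exactly two derivatives (cf.\ Lemma~\ref{Lem6.3}). Multiplying by $(\bm x-\bm x_p)^{\bm\alpha}V_p\in H^{s+2}$, the product estimate gives at most $\mathcal{V}_pu\in H^{\min\{\sigma-2,\,s+2\}}$. Inverting $\mathcal{T}$ in \eqref{Eq-2.2-03} then just returns $\psi^{\geq2}$ to $H^{\sigma}$, so starting from the a priori $\psi^{\geq2}\in H^{s+3}$ you never get past $H^{s+3}$. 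To reach $\mathcal{V}_pu\in H^{s+2}$ this way you need $g_{\bm\alpha}\in H^{s+2}$, i.e.\ $u\in H^{s+4}$ near $\bm x_p$, which is the conclusion itself.

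Your Fourier route is the same identity in disguise. With $\bm x_p=\bm 0$, the quantity $\hat V_{\mathrm{lc}}(\bm k-\bm m)-\hat V_{\mathrm{lc}}(\bm k)+\bm m\cdot\nabla\hat V_{\mathrm{lc}}(\bm k)$ is the transform of $V_{\mathrm{lc}}(\bm x)\bigl(e^{2\pi i\bm m\cdot\bm x}-1-2\pi i\bm m\cdot\bm x\bigr)$. Its integral remainder therefore reproduces the decomposition above. Bounding it by $|\bm m|^2|\hat u_{\bm m}|$ uniformly in $\bm m$, including $|\bm m|\gtrsim|\bm k|$, is even lossier. Both versions use only the top moment condition $|\bm\alpha|=2$ in \eqref{Eq-defPSH-02}.

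What is needed is a graded estimate with a strict gain, namely the paper's Lemma~\ref{Lem6.2}. If $\phi$ vanishes to order $r$ at the singular point, then $\|V\phi\|_{H^{t+r}}\lesssim\|V\|_{PSH^{s,r}}\|\phi\|_{H^{t+r+l}}$ for $-1\le t+r\le s+r$, with $l=2-b<2$. The paper proves it (Case~2; the case $s<0$ is a variant) as follows:
\begin{itemize}
\item write $\|V\phi\|_{H^{t+r}}\lesssim\sum_{|\bm\beta|\le r}\|\mathcal{D}^{\bm\beta}(V\phi)\|_{H^t}$ and expand by Leibniz;
\item pair each $\mathcal{D}^{\bm\beta'}V$ with a factor $\bm x^{\bm\alpha}$, $|\bm\alpha|=|\bm\beta'|$, extracted from $\mathcal{D}^{\bm\beta-\bm\beta'}\phi$, which vanishes to order $|\bm\beta'|$;
\item use that the balanced products $\bm x^{\bm\alpha}\mathcal{D}^{\bm\beta'}[h_cV]$ lie in $H^s$, which uses \emph{every} moment condition $|\bm\alpha|\le r$ at its own level;
\item multiply these into $H^t$ against factors in $H^{t+l}$ via Lemma~\ref{Lem5.2}.
\end{itemize}
This gives $\mathcal{V}_pu\in H^{\min\{\sigma-l,\,s+2\}}$ instead of $H^{\min\{\sigma-2,\,s+2\}}$, a gain of $b>0$ per step. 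That gain is exactly what makes $(\mathcal{T}+1)^{-1}\mathcal{V}_p$, composed with Taylor-data removal, map $H^{\tau+l}$ into $H^{\tau+2}$. You need this both in your construction of $\Phi_{p,\bm\alpha}$ and in the bootstraps for $\psi^{\geq1}$ and $\psi^{\geq2}$.

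Further points:
\begin{itemize}
\item Even with the correct lemma, applying it once and inverting $\mathcal{T}$ from $H^{s+3}$ fails when $b<1$ (e.g.\ for the 3D Coulomb case, where $l$ is slightly above $1$). You need finitely many iterations.
\item The same graded form is what starts your baseline bootstrap from $H^1$ when $d\ge2$. There your property ``$H^t\to H^s$ for $t>d/2$'' does not apply.
\item Localization requires $\Phi_{q,\bm\alpha}\in H^{s+4}$ near $\bm x_p$ for $q\neq p$; the paper gets this from Lemmas~\ref{Lem6.6} and~\ref{Lem6.4}.
\item Away from $\bm x_p$, $V_p$ is only $H^{s+2}$, not smooth. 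That suffices, but both this point and the previous one must be proved rather than assumed.
\end{itemize}
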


\begin{remark}
We refer to \eqref{Eq-2.0-01} as an asymptotic expansion in the sense that the Fourier coefficients  $\hat{\psi}_{\bm k}^{(r)}$  exhibit faster decay as $|\bm k|\to \infty$ for larger values of $r$.
\end{remark}
\begin{remark}
    One can generalize \eqref{Eq-2.0-01} to higher order term $\psi^{(r)}$ with $r\geq 2$, while determining the coefficients $\beta_{p,\bm \alpha}$ by matching the $H^{s+|\bm \alpha|}$ components in \eqref{Eq-2.2-03}.
    In this case, the residual $R$ contains a $H^{s+2}$ contribution from the term $\lambda\psi$, and thus the higher-order coefficients $\beta_{p,\bm \alpha}$ (for $|\bm \alpha| \geq 2 $) will depend on $\lambda$.
\end{remark}

\section{Asymptotic recovery}
\label{sec:postproc}
Let $\psi^N  \in \mathscr{X}_N$ be an approximation to $\mathcal{P}_N\psi$, for instance, obtained by FSM. The AR operator recovers the coefficients $\beta_{p,\bm \alpha}$ in $\psi^{(0)}$ and $\psi^{(1)}$ 
from $\psi^N$, and then adds these terms in high-frequency coefficients 
\begin{equation}
\label{Eq-3.0-01}
    \mathcal{A}_N\psi^N := \psi^N + \mathcal{P}^\perp_N\sum_{p=1}^{n_p}\sum_{|\bm \alpha|\leq 1}\beta_{p,\bm \alpha}\Phi_{p,\bm \alpha}.
\end{equation}
To determine the coefficients $\beta_{p,\bm \alpha}$, substituting $\psi=\mathcal{A}_N\psi^N$ into equation \eqref{Eq-2.2-05} yields,  for each $|\bm \alpha'|\leq 1,\ 1\leq p\leq n_p$, the linear system for 
\begin{equation}
\label{Eq-3.0-02}
     {D}^{\bm \alpha'}_{\bm x_p} \Bigl(\psi^N - \sum_{|\bm \alpha|\leq 1}\beta_{p,\bm \alpha}\mathcal{P}_N\Phi_{p,\bm \alpha} +\sum_{q\neq p}\sum_{|\bm \alpha|\leq 1}\beta_{q,\bm\alpha}\mathcal{P}_N^{\perp}\Phi_{q,\bm \alpha}\Bigr) =0.
\end{equation}
A detailed analysis presented in Section~\ref{subsec:AR4} shows that this linear system is uniformly well conditioned for all sufficiently large $N$; therefore, it uniquely determines $\beta_{p,\bm \alpha}$. Furthermore, constructing the coefficients of the linear system requires only $O(N^{d})$ operations. Indeed, evaluating the derivatives of $\psi^N$  and $\mathcal{P}_N\Phi_{p,\bm \alpha}$ at a given point requires only $O(N^d)$ operations using their Fourier coefficients, while the $\mathcal{P}_N^\perp\Phi_{p,\bm \alpha}$ term can be evaluated in the same way using the identity $\mathcal{P}_N^\perp\Phi_{p,\bm \alpha} = \Phi_{p,\bm \alpha}-\mathcal{P}_N\Phi_{p,\bm \alpha}$.

For the rest of this section, we establish the convergence order of AR and AR-FSM, and then derive explicit expressions for the asymptotic functions $\Phi_{\bm \alpha}$ for the 1D Dirac-delta potential and the 3D Coulomb potential.

\subsection{Convergence order of AR and AR-FSM}
By the triangle inequality, the error of AR post-processing can be decomposed as
\begin{equation}
\|\mathcal{A}_N\psi^N-\psi\|_{H^1} \leq \|\mathcal{A}_N(\psi^N-\mathcal{P}_N\psi)\|_{H^{1} } + \|\mathcal{A}_N\mathcal{P}_N\psi - \psi\|_{H^1}.  
\end{equation}
Bounding the first term requires a stability estimate for $\mathcal{A}_N$, while the second term quantifies its approximation accuracy. A detailed analysis of these properties is presented in Section~\ref{subsec:AR4}, leading to the following theorem.
\begin{theorem}[\bf Convergence order of AR]\label{Thm03}  
     Under  Assumption~\ref{Asp::ARFSM}, let $\Phi_{p,\bm \alpha}$ satisfy \eqref{Eq-2.2-02}, $\psi$ be an eigenfunction of $\mathcal{H}$ and  $\psi^N\in \mathscr{X}_N$. 
     Then the estimate holds for AR operator $\mathcal{A}_N$ given by \eqref{Eq-3.0-01} and \eqref{Eq-3.0-02}, for all sufficiently large $N$ 
    \begin{equation}\label{Eq-thm03-01}
    \|\mathcal{A}_N \psi^N-\psi\|_{H^{1}}\lesssim \|\psi^N-\mathcal{P}_N \psi \|_{H^1}+N^{-(s+3)}\|\psi\|_{L^2}.
    \end{equation}
\end{theorem}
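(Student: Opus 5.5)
The plan is to follow the splitting already indicated before the statement:
\[
\|\mathcal{A}_N\psi^N-\psi\|_{H^1}\le \|\mathcal{A}_N\psi^N-\mathcal{A}_N\mathcal{P}_N\psi\|_{H^1}+\|\mathcal{A}_N\mathcal{P}_N\psi-\psi\|_{H^1}.
\]
The first term is handled by stability and the second by consistency. A preliminary observation is that $\mathcal{A}_N$ is affine-linear in its input. The coefficients $\beta=(\beta_{p,\bm\alpha})$ solve the linear system \eqref{Eq-3.0-02}, which we write as $M_N\beta=f_N(\psi^N)$. Here $f_N(\psi^N)=(\mathcal{D}^{\bm\alpha'}_{\bm x_p}\psi^N)_{p,\bm\alpha'}$, and $M_N$ has entries $\mathcal{D}^{\bm\alpha'}_{\bm x_p}\mathcal{P}_N\Phi_{p,\bm\alpha}$ on the diagonal blocks and $-\mathcal{D}^{\bm\alpha'}_{\bm x_p}\mathcal{P}_N^\perp\Phi_{q,\bm\alpha}$ off the diagonal. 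So $\mathcal{A}_N$ is in fact linear, and $\mathcal{A}_N(\psi^N-\mathcal{P}_N\psi)=e+\mathcal{P}_N^\perp\sum\beta[e]\Phi_{p,\bm\alpha}$ with $e:=\psi^N-\mathcal{P}_N\psi\in\mathscr{X}_N$.

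\textbf{Step 1: uniform invertibility of $M_N$.} I would show that $M_N\to M_\infty$ as $N\to\infty$. For the limit I use the normalization in \eqref{Eq-2.2-02}: $\mathcal{D}^{\bm\alpha'}_{\bm x_p}\Phi_{p,\bm\alpha}=\delta_{\bm\alpha',\bm\alpha}$ for $|\bm\alpha'|\le|\bm\alpha|$. The entry with $\bm\alpha=\bm 0$, $|\bm\alpha'|=1$ is the gradient of $\Phi_{p,\bm 0}$ at $\bm x_p$, which need not exist classically. I would interpret it through the definition of $\mathcal{D}$ as a functional, for instance via symmetric averaging, and then $M_\infty$ is block lower-triangular with unit diagonal. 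The off-diagonal blocks involve point evaluations of $\mathcal{P}_N^\perp\Phi_{q,\bm\alpha}$ at $\bm x_p\neq\bm x_q$. I expect these to tend to zero because $\Phi_{q,\bm\alpha}$ is smooth away from $\bm x_q$, so its Fourier truncation converges locally uniformly there. Expressing the derivative functionals via Fourier sums, the diagonal entries $\mathcal{D}^{\bm\alpha'}\mathcal{P}_N\Phi_{p,\bm\alpha}$ converge to the normalized values. This gives $\|M_N^{-1}\|\lesssim1$ for large $N$.

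\textbf{Step 2: stability.} For $e\in\mathscr{X}_N$, an inverse inequality gives $|\mathcal{D}^{\bm\alpha'}_{\bm x_p}e|\lesssim N^{|\bm\alpha'|+d/2-1}\|e\|_{H^1}$, up to a logarithm at the critical case. Since $\Phi_{p,\bm\alpha}$ has regularity $H^{s+2+|\bm\alpha|}$, we have $\|\mathcal{P}_N^\perp\Phi_{p,\bm\alpha}\|_{H^1}\lesssim N^{-(s+1+|\bm\alpha|)}$. The product is therefore $N^{d/2-2-s}\|e\|_{H^1}$ (times a logarithm at most), and this is $\lesssim\|e\|_{H^1}$ precisely because $s>d/2-2$, which is strict in Assumption~\ref{Asp::ARFSM}. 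Combining with Step 1, I get $\|\mathcal{A}_N e\|_{H^1}\lesssim\|e\|_{H^1}$.

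\textbf{Step 3: consistency.} By Lemma~\ref{Lem2.1}, $\psi=\sum\beta^*_{p,\bm\alpha}\Phi_{p,\bm\alpha}+\psi^{\ge2}$, where $\beta^*$ satisfies \eqref{Eq-2.2-05}. Substituting this into the system for input $\mathcal{P}_N\psi$, one finds that the true coefficients satisfy $M_N\beta^*=f_N(\mathcal{P}_N\psi)+\rho_N$. The residual collects the terms $\mathcal{D}^{\bm\alpha'}_{\bm x_p}\mathcal{P}_N^\perp\psi^{\ge2}$ together with the cross terms already built into $M_N$. Then $\mathcal{A}_N\mathcal{P}_N\psi-\psi=-\mathcal{P}_N^\perp\psi^{\ge2}+\mathcal{P}_N^\perp\sum(\beta-\beta^*)\Phi$, and two bounds follow:
\[
\|\mathcal{P}_N^\perp\psi^{\ge2}\|_{H^1}\lesssim N^{-(s+3)}\|\psi\|_{L^2},\qquad |\mathcal{D}^{\bm\alpha'}\mathcal{P}_N^\perp\psi^{\ge2}|\lesssim N^{-(s+4-d/2-|\bm\alpha'|)}\|\psi\|_{L^2}.
\]
The second is a tail sum of $|\bm k|^{|\bm\alpha'|}|\hat\psi^{\ge2}_{\bm k}|$ estimated by Cauchy--Schwarz. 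Multiplying $|\beta-\beta^*|$ by $N^{-(s+1)}$ gives $N^{-(2s+4-d/2)}\le N^{-(s+3)}$, again using $s>d/2-2$ (logarithms absorbed).

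\textbf{Main obstacle.} The delicate part is Step 1 for the entries $\mathcal{D}^{\bm\alpha'}_{\bm x_p}\Phi_{p,\bm 0}$ with $|\bm\alpha'|=1$, together with the uniform treatment of derivative functionals on functions of limited regularity such as $H^{s+2}$ with $s+2$ possibly below $d/2+1$. My plan there is to use the weak definition of $\mathcal{D}$ and exploit the decay of $\hat\Phi$ to show convergence of the truncated Fourier sums.
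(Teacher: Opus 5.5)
Your outline has a genuine gap. It sits in Step~1 and carries into Steps~2--3.

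\textbf{Step 1 fails in general.} The entries $\mathcal{D}^{\bm\alpha'}_{\bm x_p}[\mathcal{P}_N\Phi_{p,\bm 0}]$ with $|\bm\alpha'|=1$ are gradients at $\bm x_p$ of trigonometric polynomials. In general the only available bound is $O(N^{\max\{\varepsilon,\,d/2-s-1\}})$, from Lemma~\ref{Lem5.5}(iii) with $m=s+2$. For $s<d/2-1$ these entries can actually diverge. Take $d=3$ and suppose $V_p$ behaves near $\bm x_p$ like $|\bm y|^{-a}(1+\bm e\cdot\bm y/|\bm y|)$, with $\bm y=\bm x-\bm x_p$ and $a\approx d/2-s$. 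Then $\Phi_{p,\bm 0}-1\sim|\bm y|^{\mu}W(\bm y/|\bm y|)$ with $\mu=2-a<1$ and $W$ containing a dipole component. The cube-truncated gradient at $\bm x_p$ then generically grows like $N^{1-\mu}=N^{d/2-s-1}$.

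Reinterpreting $\mathcal{D}$ (symmetric averaging, weak definitions) does not change these finite-$N$ numbers, and they are what enter $M_N$. They vanish only for symmetric singularities such as the Dirac and Coulomb examples. So $M_N$ has no limit. Because the unbounded block sits below the diagonal of an almost block-triangular matrix, $\|M_N^{-1}\|$ grows at least like it.

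\textbf{A bounded unscaled inverse would not suffice either.} The estimate $|\beta|\le\|M_N^{-1}\|\,|f|$ lumps all $\bm\alpha'$ together. Your Step~2 product $N^{d/2-2-s}$ tacitly pairs $|\bm\alpha'|$ with $|\bm\alpha|$; the cross pairing $|\bm\alpha'|=1$, $|\bm\alpha|=0$ gives $N^{d/2-1-s}$. In Step~3 you do lump, and the final inequality $N^{-(2s+4-d/2)}\le N^{-(s+3)}$ is equivalent to $s\ge d/2-1$, not to $s>d/2-2$. So for $d/2-2<s<d/2-1$ your argument only yields $N^{-(2s+4-d/2)}$.

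\textbf{The missing idea is the graded rescaling used in the paper.} Set $\hat\beta_{p,\bm\alpha}=N^{-|\bm\alpha|}\beta_{p,\bm\alpha}$ and $\hat d_{p,\bm\alpha'}=N^{-|\bm\alpha'|}\mathcal{D}^{\bm\alpha'}_{\bm x_p}[\psi^N]$, so the matrix entries become $N^{|\bm\alpha|-|\bm\alpha'|}(\cdots)$. Then:
\begin{itemize}
\item the divergent entry becomes $N^{-1}O(N^{\max\{\varepsilon,d/2-s-1\}})=o(1)$;
\item entries with $|\bm\alpha'|<|\bm\alpha|$ become $N\cdot O(N^{d/2-s-3})=o(1)$ by Lemma~\ref{Lem5.5}(ii);
\item cross-point entries with $|\bm\alpha'|>|\bm\alpha|$ are $o(1)$ by Lemma~\ref{Lem5.5}(iii) together with the splitting of Lemma~\ref{Lem6.4}.
\end{itemize}
The last item also removes any need for a localization principle for cube partial sums at $\bm x_p\neq\bm x_q$. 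Your Step~1 assumes such a principle without proof, and it is not automatic for $d\ge2$.

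The rescaled matrix is therefore $I+o(1)$. The resulting bound $|\beta_{p,\bm\alpha}|\le 2N^{|\bm\alpha|}\max_{q,\bm\alpha'}N^{-|\bm\alpha'|}|\mathcal{D}^{\bm\alpha'}_{\bm x_q}[\cdot]|$ pairs exactly with $\|\mathcal{P}_N^\perp\Phi_{p,\bm\alpha}\|_{H^1}\lesssim N^{-(s+1+|\bm\alpha|)}$. It gives an $O(1)$ factor in the stability step and $N^{-(s+3)}N^{d/2-s-2}$ in the consistency step. With this change, the rest of your outline goes through: the linearity of $\mathcal{A}_N$, the residual $\rho_N$, and the tail bounds for $\psi^{\geq 2}$.
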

By this lemma, the overall convergence order of eigenfunctions is governed by the minimum of  $s+3$ and the order of projected error $\|\psi^N - \mathcal{P}_N \psi\|_{H^1}$. According to Lemma~\ref{Lem1.1}, FSM produces an approximation $\psi^N \in \mathscr{X}_N$ such that the projected error converges at order $s+1+b$. Provided Lemma~\ref{Lem1.1} and Theorem~\ref{Thm03}, we can establish the convergence order of AR-FSM in Theorem~\ref{Thm02}.
\begin{proof}[Proof of Theorem~\ref{Thm02}]
    Combining Lemma~\ref{Lem1.1} and Theorem~\ref{Thm03} together,
    \begin{equation}\label{Eq-thm01-02}
    \begin{aligned}
        \|\mathcal{A}_N\psi_n^N-\psi\|_{H^1} 
        &\lesssim \|\psi_n^N-\mathcal{P}_N\psi\|_{H^1} + N^{-(s+3)}\|\psi\|_{L^2}\\
        &\lesssim N^{-(s+1+b)}\|\psi\|_{L^2},
    \end{aligned}
    \end{equation}
    where $\psi \in \operatorname{ker}(\lambda_n-\mathcal{H})$ is an exact eigenfunction associated with $\lambda_n$. This proves the estimate for eigenfunction since $\tilde{\psi}_n^N=\mathcal{A}_N\psi_n^N$. The estimate for eigenvalue is obtained using the Rayleigh quotient and Lemma~\ref{Lem5.3}:
     \begin{equation}
    \begin{aligned}
        |\tilde\lambda_n^N-\lambda_n| \|\tilde\psi_n^N\|^2_{L^2}
        &= 
        \left|\langle \tilde\psi_n^N,(\mathcal{H}-\lambda_n)\tilde\psi_n^N\rangle\right|\\
        &=
        \left| \langle \tilde\psi_n^N-\psi,(\mathcal{H}-\lambda_n)(\tilde\psi_n^N-\psi)\rangle  \right|\\
        &\lesssim 
        \|\tilde\psi_n^N-\psi\|_{H^1}^2 + (\lambda_n+M)\|\tilde\psi_n^N-\psi\|_{L^2}^2\\
        &\lesssim N^{-(2s+2+2b)}\|\psi\|_{L^2}^2\\
        &\lesssim N^{-(2s+2+2b)}\|\tilde\psi_n^N\|_{L^2}^2,
    \end{aligned}
    \end{equation}
    where the last two inequalities are obtained using \eqref{Eq-thm01-02}. Thus, dividing both sides $\|\tilde\psi_n^N\|_{L^2}^2$ yields the desired estimate for eigenvalue.
\end{proof}

\subsection{Explicit construction of asymptotic functions}
\label{subsec:3.2}

The general existence of functions $\Phi_{p,\bm \alpha}$ satisfying \eqref{Eq-2.2-02} is guaranteed by Lemma~\ref{Lem6.5}. For practical purposes, we derive explicit expressions for these functions for certain classes of potentials in this section. Since the construction is invariant under translations, we shift $\bm x_p$ to the origin and restrict our attention to the case $\bm x_p=\bm 0$. Our objective is to construct $\Phi_{\bm \alpha} $ for $ |\bm \alpha|\leq 1$ such that
 \begin{equation}\label{Eq-3.2-01}
\bigl(\mathcal{T}+\mathcal{V}\bigr)\Phi_{\bm \alpha}\in H^{s+2},\qquad          \mathcal{D}^{\bm \alpha'}_{\bm 0}[\Phi_{\bm \alpha}] = \delta_{\bm \alpha',\bm \alpha},\quad \forall\, |\bm\alpha'|\leq |\bm \alpha|,
 \end{equation}
for the 1D Dirac delta potential and the 3D Coulomb potential.
\subsubsection*{1D Dirac-delta potential} The potential $V(x) = \gamma\delta(x)$ where $d=1$ and $s=0.5^{\m}$. 
The asymptotic function for $\alpha=0$ is defined by \eqref{Eq-1.3-03}. 
A direct calculation shows that
\begin{equation}
    (\mathcal{T}+\mathcal{V})\Phi_{0}(x) =-\frac{\pi\gamma}{2}\sin(\pi |x|)\in H^{s+2},
\end{equation}
which verifies the required conditions \eqref{Eq-3.2-01} for the case $\alpha=0$.

For  $\alpha=1$, note that $\Phi_1(x)V(x)=0$  whenever $\Phi_1(0)=0$, we may simply choose a smooth function
\begin{equation}
    \Phi_{1} (x) = \frac{1}{2\pi}\sin (2\pi x).
\end{equation}
 In this case, we have $\psi^{(1)}\in C^{\infty}$ in asymptotic expansion \eqref{Eq-2.0-01}. Consequently, $\psi^{(1)}$ can be absorbed into $\psi^{\geq 2}$ and is therefore not necessary to include in the expansion.

\subsubsection*{3D Coulomb potential} The potential $V_{\mathrm{lc}}(\bm x) \sim \gamma|\bm x|^{-1}$ where $d=3$ and $s=0.5^{\m}$. In the periodic setting, we define $V=\sum_{\bm n}V_{\mathrm{lc}}(\bm \cdot+\bm n)$ via its Fourier coefficients as 
\begin{equation}
\hat V_{\bm k}=
\begin{cases}
0, & \bm k=\bm 0,\\
\dfrac{\gamma}{\pi}\dfrac{1}{|\bm k|^{2}}, & \bm k\neq \bm 0 ,
\end{cases}
\qquad \bm k \in \mathbb{Z}^3.
\end{equation}
Define the auxiliary function (with $B>0$ an arbitrary parameter)
\begin{equation}\label{Eq-3.2-05}
    \Psi(\bm x)
    := \sum_{\bm n\in \mathbb{Z}^3} e^{-2\pi B|\bm x + \bm n|}
    \quad \Longleftrightarrow \quad
    \hat{\Psi}_{\bm k}
    = \frac{B}{\pi^2\bigl(|\bm k|^2+B^2\bigr)^2}.
\end{equation}
We then define
\begin{equation}\label{Eq-3.2-06}
    \Phi_{\bm 0}(\bm x)
    := -\frac{\gamma}{4\pi B}\bigl(\Psi(\bm x)-\Psi(\bm 0)\bigr) + 1 .
\end{equation}
To construct $\Phi_{\bm \alpha}$ for $|\bm \alpha|=1$, we multiply the auxiliary
function in \eqref{Eq-3.2-05} by $\bm x^{\bm \alpha}$,
\begin{equation}
    \Psi^{\bm \alpha}(\bm x)
    := \sum_{\bm n\in \mathbb{Z}^3}
    (\bm x+\bm n)^{\bm \alpha} e^{-2\pi B|\bm x+\bm n|}
    \quad \Longleftrightarrow \quad
    \hat{\Psi}^{\bm \alpha}_{\bm k}
    = \frac{-2 i B\, \bm k^{\bm \alpha}}
    {\pi^3\bigl(|\bm k|^2+B^2\bigr)^3}.
\end{equation}
Let $s_{\bm \alpha} = {(2\pi)}^{-1}\sin (2\pi \bm x^{\bm \alpha})$, then we define
\begin{equation}\label{Eq-3.2-08}
    \Phi_{\bm \alpha}(\bm x)
    := -\frac{\gamma}{2\pi B}
    \Bigl(
        (\Psi^{\bm \alpha}(\bm x)
        - s_{\bm \alpha}(\bm x)\mathcal{D}^{\bm \alpha}_{\bm 0}[ \Psi^{\bm \alpha}] 
    \Bigr)
    +s_{\bm \alpha}(\bm x),
    \qquad |\bm \alpha|=1.
\end{equation}
The asymptotic functions are explicitly constructed in \eqref{Eq-3.2-06} and \eqref{Eq-3.2-08}. 

Since the super-convergence order of FSM satisfies $s+1+b\leq s+2$ for the 3D Coulomb potential, recovering the $\Phi_{\bm 0}$ term alone is sufficient to fully exploit the super-convergence property. Indeed, for $|\bm \alpha|=1$,
\[
\|\mathcal{P}_{N}^{\perp}\Phi_{\bm \alpha}\|_{H^1}
\lesssim
N^{-(s+2)}
\|\Phi_{\bm \alpha}\|_{H^{s+3}},
\]
so the contribution from these terms is already of higher order than the target convergence order $s+1+b$. Below we only verify that $\Phi_{\bm 0}$ defined in \eqref{Eq-3.2-06} satisfies \eqref{Eq-3.2-01}.
The function $\Phi_{\bm 0}$ satisfies the normalization condition
$\mathcal{D}^{\bm 0}_{\bm 0}[\Phi_{\bm 0}] = 1$, and a direct calculation yields
\begin{equation}
    \bigl(\mathcal{T}+\mathcal{V}\bigr)\Phi_{\bm 0}
    = V -\frac{\gamma}{4\pi B}\,\mathcal{T}\Psi
      - \frac{\gamma}{4\pi B}\, V\bigl(\Psi-\Psi(\bm 0)\bigr).
\end{equation}
The Fourier coefficients of
$V - \frac{\gamma}{4\pi B}\mathcal{T}\Psi$ are given by
\[
    \frac{\gamma}{\pi}\frac{1}{|\bm k|^2} - \frac{\gamma}{4\pi B}\frac{4\pi^2 |\bm k|^2B}{\pi^2(|\bm k|^2+B^2)^2} = \frac{\gamma}{\pi}\left(\frac{1}{|\bm k|^2}-\frac{|\bm k|^2}{(|\bm k|^2+B^2)^2}\right),
\]
which decay as $\mathcal{O}(|\bm k|^{-4})$. Therefore this term belongs to
$H^{s+2}$. For $ V(\Psi-\Psi(\bm 0))$ term, using the real-space representation in
\eqref{Eq-3.2-05}, 
\begin{equation}\label{Eq-3.2-09}
    V\bigl(\Psi-\Psi(\bm 0)\bigr)
    = \sum_{\bm n \in \mathbb{Z}^3}
    V_{\mathrm{lc}}(\bm x+\bm n)
    \Bigl(
        e^{-2\pi B|\bm x+\bm n|}-1
        + \phi(\bm x+\bm n)
        - \phi(\bm 0)
    \Bigr),
\end{equation}
where $\phi(\bm x)
    := \sum_{\bm n\in \mathbb{Z}^3\setminus\{\bm 0\}}
    e^{-2\pi B|\bm x+\bm n|}$
collects the contributions from the periodic images. The function $\phi$ is smooth in a neighborhood of $\bm x=\bm 0$ and satisfies $\nabla\phi(\bm 0)=\bm 0$. Consequently,
$
    V_{\mathrm{lc}}(\bm x)\bigl(\phi(\bm x)-\phi(\bm 0)\bigr)
    \in H^{s+2}(\mathbb{R}^3).
$ On the other hand, the function $|\bm x|^{-1}\bigl(e^{-2\pi B|\bm x|}-1\bigr)$ also belongs to $H^{s+2}(\mathbb{R}^3)$ for $s=0.5^{\m}$.
It therefore follows from \eqref{Eq-3.2-09} that $ \mathcal V\bigl(\Psi-\Psi(\bm 0)\bigr)\in H^{s+2} $. 

\begin{remark}
    In both the Dirac-delta and Coulomb cases, the high-frequency component of $\Phi_{p, \bm \alpha}$ can be written as $\gamma$ times a fixed function, and $\gamma$ denotes the strength of the potential $V$. It is not clear at present whether this property persists for general potential with point singularities.
\end{remark}

\section{Numerical experiments}
\label{sec:experiments}
In this section, we numerically validate the theoretical estimates and evaluate the performance of FSM and AR-FSM. We first benchmark FSM, and confirm the sharpness of our convergence bounds stated in Theorem~\ref{Thm01}. Then we apply AR-FSM to two representative singular potentials—the 1D Dirac‑delta potential and the 3D Coulomb potential—to demonstrate the improved convergence order.
The resulting algebraic eigenvalue problems are solved using the Locally Optimal Block Preconditioned Conjugate Gradient (LOBPCG) method \cite{knyazev2001LOBPCG}.

\subsection{FSM benchmark}
To present the numerical results, we first introduce several error indicators.
For the $n$-th numerical eigenpair $(\lambda_n^N, \psi_n^N) \in \mathbb{R} \times \mathscr{X}_N$, with $\lambda_n$ denoting the corresponding reference eigenvalue, we define the eigenvalue error as
\begin{equation}
\mathrm{EigErr}_n := |\lambda_n^N - \lambda_n^{\mathrm{ref}}|.
\end{equation}
The $H^1$ eigenfunction errors are quantified in two ways. First, $\mathrm{pH1Err}_n$ measures the relative $H^1$ error with respect to the projection of the reference eigenfunction onto $\mathscr{X}_N$:
\begin{equation}\label{eq:pH1Err}
\mathrm{pH1Err}_n := \min_{\psi^{\mathrm{ref}}_n} 
\frac{\|\psi_n^N - \mathcal{P}_N \psi^{\mathrm{ref}}_n\|_{H^1}}{\|\psi_n^N\|_{H^1}},
\end{equation}
while $\mathrm{H1Err}_n$ measures the relative $H^1$ error with respect to the reference eigenfunction
\begin{equation}
\mathrm{H1Err}_n := \min_{\psi_n^{\mathrm{ref}} } 
\frac{\|\psi_n^N - \psi_n^{\mathrm{ref}}\|_{H^1}}{\|\psi_n^N\|_{H^1}},
\end{equation}
where the minimization is taken over the eigenspace associated with the reference eigenvalue $\lambda_n^{\mathrm{ref}}$.

To evaluate the overall accuracy in the numerical experiments, we define the mean errors over the first $m=8$ eigenstates:
\[
\mathrm{EigErr} := \frac{1}{m}\sum_{n=1}^{m} \mathrm{EigErr}_n, \quad
\mathrm{pH1Err} := \frac{1}{m}\sum_{n=1}^{m} \mathrm{pH1Err}_n, \quad
\mathrm{H1Err} := \frac{1}{m}\sum_{n=1}^{m} \mathrm{H1Err}_n.
\]

\subsubsection*{1D model problem} \label{subsubsec:4.1.1}

\begin{figure}[t]
    \centering
    \captionof{table}{Theoretical convergence orders of $\mathrm{EigErr}$, $\mathrm{pH1Err}$, $\mathrm{H1Err}$ in 1D test case,  as predicted by Lemma~\ref{Lem1.1} and Theorem~\ref{Thm01} with different values of $t$ in \eqref{Eq-4.1-01}. The notation $a^{\m}$ denotes $a-\varepsilon$ for arbitrarily small $\varepsilon>0$.}
    \label{tab:02}
    \begin{tabular}{c c c  c c c}
    \toprule
    \multicolumn{3}{c}{parameters} & \multicolumn{3}{c}{Theoretical orders} \\
    \cmidrule(lr){1-3}
    \cmidrule(lr){4-6}
    $t$ & $s$ & $b$ 
    & EigErr & pH1Err & H1Err \\
    \midrule
    $0$        & $-0.5^\m$ & $0.5^\m$ & $1.0^\m$   & $1.0^\m$   & $0.5^\m$ \\
    $0.5$ & $0^\m$         & $1.0^\m$        & $2.0^\m$   & $2.0^\m$   & $1.0^\m$ \\
    $1.0$        & $0.5^\m$  & $1.5^\m$ & $3.0^\m$   & $3.0^\m$ & $1.5^\m$ \\
    \bottomrule
    \end{tabular}
    
    \includegraphics[width=0.95\linewidth]{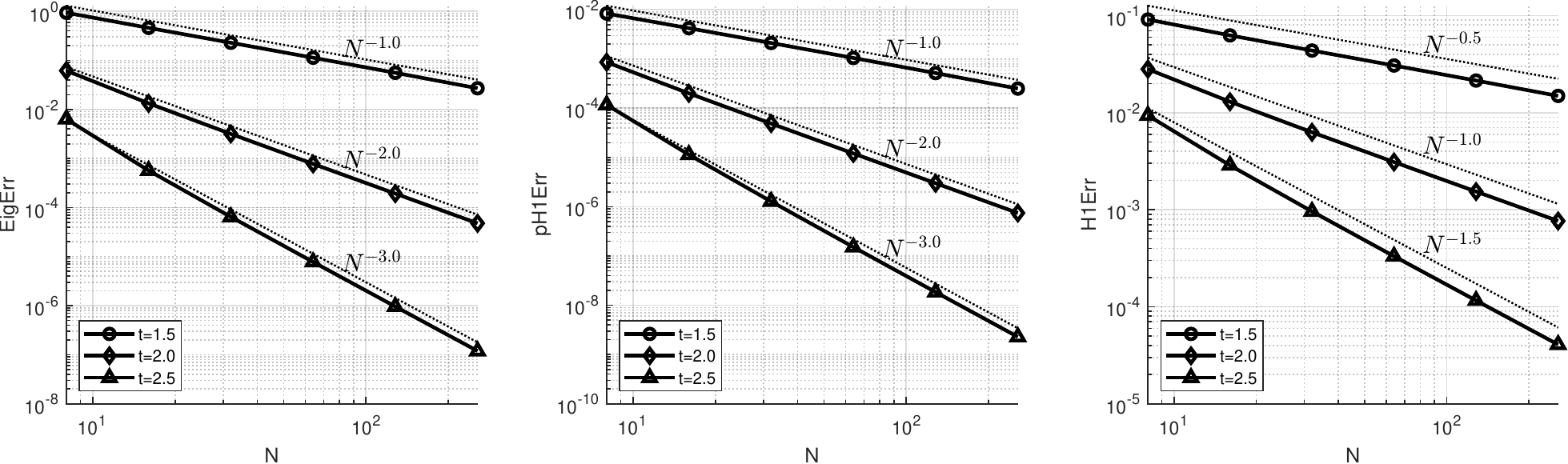}
    \captionof{figure}{FSM benchmark: Convergence of $\mathrm{EigErr}$ (left), $\mathrm{pH1Err}$ (middle), $\mathrm{H1Err}$ (right) against $N$ in 1D test case with different values of  in \eqref{Eq-4.1-01}. The black curves with markers represent the numerical errors,  where the markers correspond to $N=8,16,32,64,128, 256$. The dotted lines indicate the theoretical convergence slopes. The observed convergence orders are in excellent agreement with the theoretical predictions. } 
    \label{fig:02}

\end{figure}

Consider the 1D test problem with potential $V$ defined by the Fourier coefficients
\begin{equation}\label{Eq-4.1-01}
\hat{V}_k = \begin{cases}
    0 & k=0,\\
    -10|k|^{-t}&k\neq 0.
\end{cases}
\end{equation}
By the definition of $H^s$ norm \eqref{Eq-1.1-03}, we have $V \in H^{s}$ for $s=(t-0.5)^{\m}$. 

We investigate the convergence order of FSM for three representative values of $t=0, 0.5,1.0$. Table~\ref{tab:02} summarizes the theoretical convergence order predicted by Lemma~\ref{Lem1.1} and Theorem~\ref{Thm01}, while Figure~\ref{fig:02} illustrates the convergence of the FSM solutions with respect to the truncation parameter $N$ for these values of $t$. 

\subsubsection*{3D model problem}\label{subsubsec:4.1.2}

\begin{figure}[t]
\centering
    \captionof{table}{Theoretical convergence orders of $\mathrm{EigErr}$, $\mathrm{pH1Err}$ in 3D test case, as predicted by Lemma~\ref{Lem1.1} and Theorem~\ref{Thm01} with different values of $t$ in \eqref{Eq-4.1-02}. The notation $a^{\m}$ denotes $a-\varepsilon$ for arbitrarily small $\varepsilon>0$.}
    \label{tab:03}
    \begin{tabular}{c c c  c c c}
    \toprule
    \multicolumn{3}{c}{parameters} & \multicolumn{3}{c}{Theoretical orders} \\
    \cmidrule(lr){1-3}
    \cmidrule(lr){4-6}
    $t$ & $s$ & $b$ 
    & EigErr & pH1Err & H1Err \\
    \midrule
    $1.5$        & $0^\m$    & $0.5^\m$ & $2.0^\m$   & $1.5^\m$   & $1.0^\m$ \\
    $2.0$        & $0.5^\m$  & $1.0^\m$ & $3.0^\m$   & $2.5^\m$   & $1.5^\m$ \\
    $2.5$        & $1.0^\m$  & $1.5^\m$ & $4.0^\m$   & $3.5^\m$   & $2.0^\m$ \\
    \bottomrule
    \end{tabular}

    \includegraphics[width=0.95\linewidth]{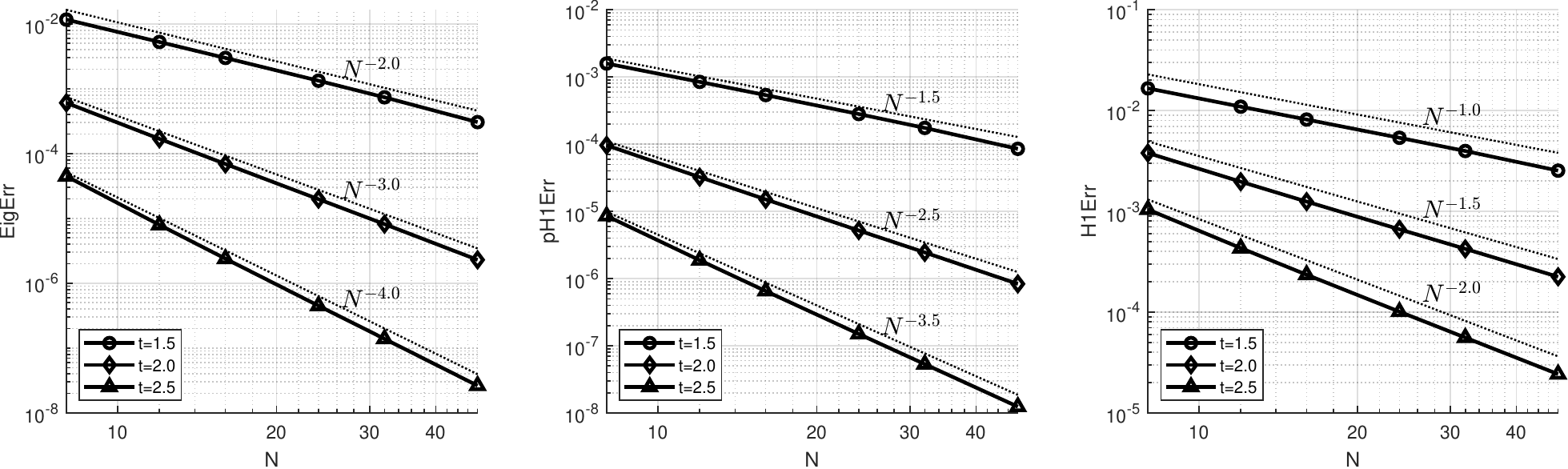}
    \captionof{figure}{FSM benchmark: Convergence of $\mathrm{EigErr}$ (left), $\mathrm{pH1Err}$ (middle), $\mathrm{H1Err}$ (right) against $N$ in 3D test case with different values of in \eqref{Eq-4.1-02}. The black curves with markers represent the numerical errors,  where the markers correspond to $N=8,12,16,24,32,48$. The dotted lines indicate the reference slopes corresponding to theoretical orders. The observed convergence orders are in excellent agreement with the theoretical predictions. }
    \label{fig:03}
\end{figure}

We next consider the 3D test problem to examine the convergence behavior of FSM in higher dimensions.
The potential $V$ is defined through its Fourier coefficients
\begin{equation}\label{Eq-4.1-02}
\hat{V}_{\bm k} =
\begin{cases}
0, & \bm k=\bm 0,\\
-|\bm k|^{-t}, & \bm k\neq \bm 0.
\end{cases}
\end{equation}

By the definition of $H^s$ norm \eqref{Eq-1.1-03}, we have $V \in H^{s}$ for $s=(t-1.5)^{\m}$.

We investigate the convergence order of FSM for three representative values of $t=1.5, 2.0,2.5$. Table~\ref{tab:03} summarizes the theoretical convergence order predicted by Lemma~\ref{Lem1.1} and Theorem~\ref{Thm01}, while Figure~\ref{fig:03} illustrates the convergence of the FSM solutions with respect to the truncation parameter $N$ for these values of $t$. 

 The observed convergence orders in both test cases are in excellent agreement with the theoretical predictions, thereby confirming the sharpness of the estimate of FSM in \ref{Lem1.1} and Theorem~\ref{Thm01}.

\subsection{Performance of AR-FSM}

Next, we compare the performance of FSM and AR-FSM  with point-singular potential.
We first consider a 1D test problem with a superposition of Dirac-delta singularities, and then a three-dimensional test problem with a superposition of Coulomb singularities. The corresponding asymptotic basis functions are provided in Section~\ref{subsec:3.2}.

To observe the convergence order of the AR post-processed eigenvalues in Theorem~\ref{Thm02}, it is necessary to evaluate the numerical integrals in the Rayleigh quotient with sufficient accuracy. We employ a re-expansion technique to eliminate terms involving more than two distinct singular points, while terms associated with a single singular point are computed analytically. The details of the implementation are deferred to  the forthcoming work \cite{lipreparationEFSM}.

\begin{figure}[t]
\centering
\begin{minipage}{0.48\linewidth}
\centering
\includegraphics[width=\linewidth]{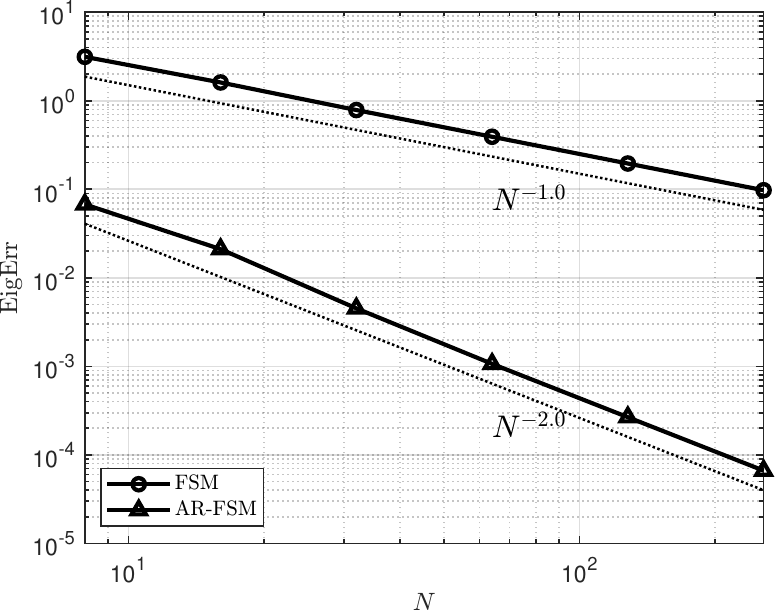}
\end{minipage}
\hfill
\begin{minipage}{0.48\linewidth}
\centering
\includegraphics[width=\linewidth]{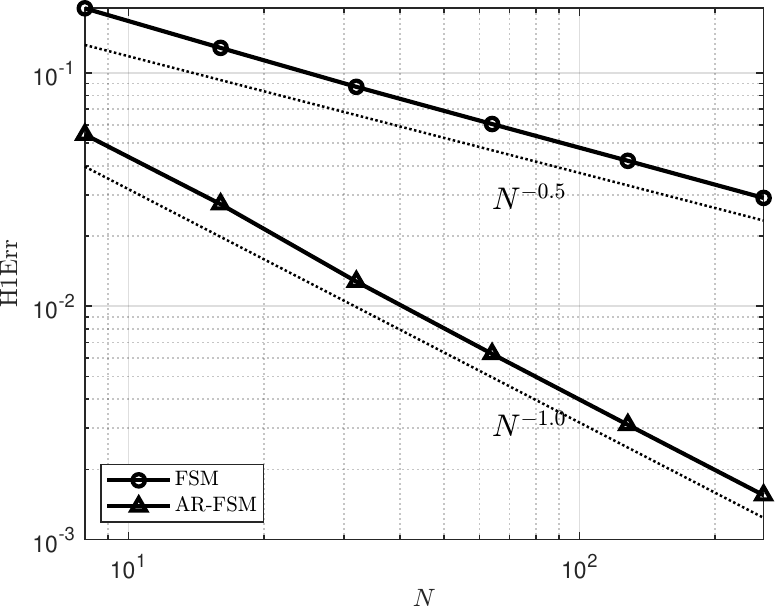}
\end{minipage}

\begin{minipage}{0.48\linewidth}
\centering
\includegraphics[width=\linewidth]{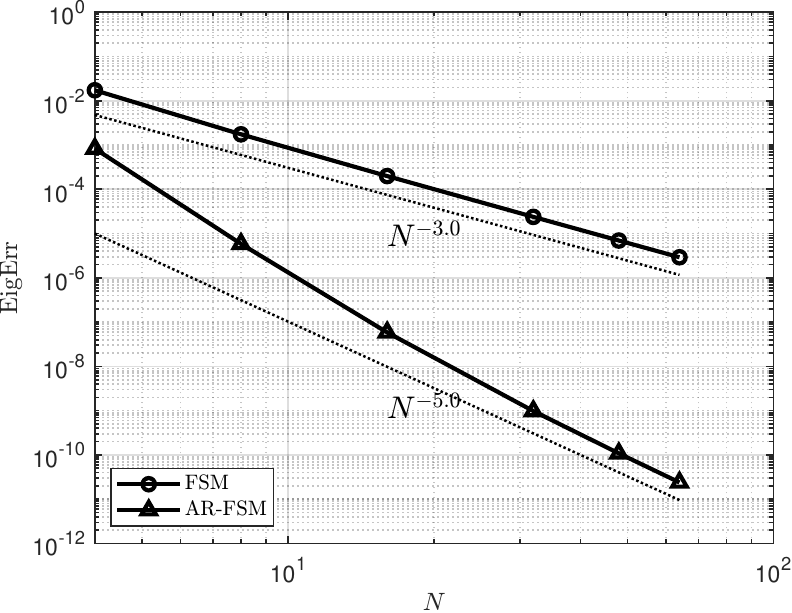}
\end{minipage}
\hfill
\begin{minipage}{0.48\linewidth}
\centering
\includegraphics[width=\linewidth]{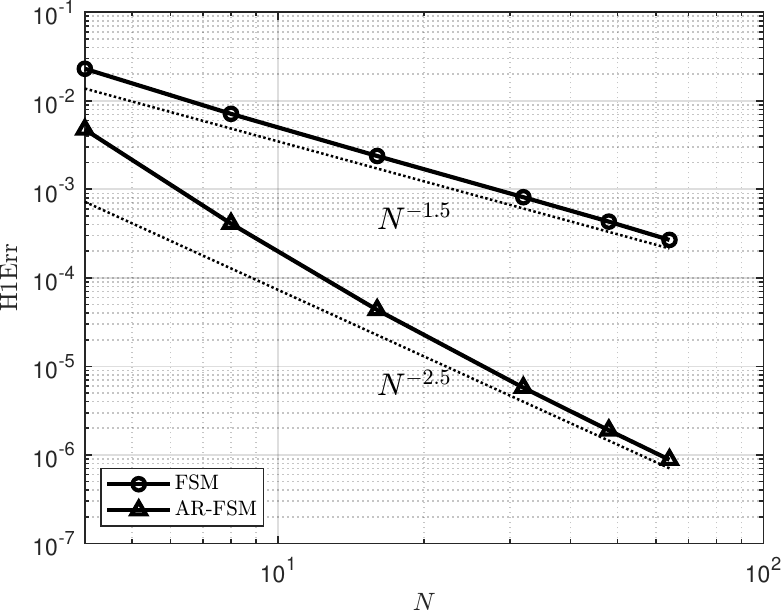}
\end{minipage}
\caption{ FSM v.s. AR-FSM: Convergence of EigErr (left) and H1Err (right) against $N$ for the 1D Dirac-delta potentials (top) and the 3D Coulomb potentials (bottom).
The bold curves with markers represent the numerical error, where the markers correspond to $N=8,16,32,64,128,256$ in the 1D case, and $N=4,8,16,32,48,64$ in the 3D case.
The results confirm the theoretical predictions and demonstrate the superior performance of AR-FSM over FSM.}
\label{fig:04}
\end{figure}

\subsubsection*{1D Dirac-delta singularities}
Consider the potential
\[
V(x) = \sin(2\pi x) + \sum_{p=1}^{n_p} \gamma_p \, \delta(x - x_p),
\]
with $n_p=6$. The positions $\hat{x}:=(x_p)$ and the strengths $ \hat{\gamma}:=(\gamma_p)$ are
\[
\hat{x} = (0.08,\ 0.21,\ 0.39,\ 0.57,\ 0.74,\ 0.91),
\qquad
\hat{\gamma} = (-12,7,\ -9,\  6,-7,\  9).
\]
The potential $V$ satisfies Assumption~\ref{Asp::ARFSM} with index $s=-0.5^{\m}$. 

\subsubsection*{3D Coulomb singularities}
Consider the potential
\[
V(\bm{x}) = \sum_{p=1}^{n_p} \gamma_p \, V_{\mathrm{Coulomb}}(\bm{x}-\bm{x}_p),
\]
where $ V_{\mathrm{Coulomb}} $ is defined via its Fourier coefficients
\[
\hat V_{\mathrm{Coulomb,}\bm k} =
\begin{cases}
\dfrac{\pi}{2}, & \bm k=\bm 0,\\[1mm]
\dfrac{1}{\pi |\bm k|^{2}}, & \bm k\neq \bm 0.
\end{cases}
\]
We set $ n_p=4 $, with 
\[
\begin{aligned}
\bm{x}_1 &= (0.00,\,0.00,\,0.00), &
\bm{x}_2 &= (0.70,\,0.40,\,0.40),\\
\bm{x}_3 &= (0.30,\,0.50,\,0.70), &
\bm{x}_4 &= (0.40,\,0.70,\,0.40),
\end{aligned}
\]
\[
\bm{\gamma}_1=\gamma_2=\gamma_3=\gamma_4=-\pi.
\]
The potential $V$ satisfies Assumption~\ref{Asp::ARFSM} with index $s=0.5^{\m}$.

According to Theorems \ref{Thm01} and \ref{Thm02}, in the 1D Dirac-delta case ($s=-0.5^{\m}$, $b=0.5^{\m}$), FSM achieves convergence orders of $1^{\m}$ for $\mathrm{EigErr}$ and $0.5^{\m}$ for $\mathrm{H1Err}$, while the AR-FSM improves these to $2^{\m}$ and $1^{\m}$, respectively. 
In the 3D Coulomb case ($s=0.5^{\m}$, $b=1^{\m}$),  FSM achieves convergence orders of $3^{\m}$ for $\mathrm{EigErr}$ and $1.5^{\m}$ for $\mathrm{H1Err}$, while the AR-FSM improves these to $5^{\m}$ and $2.5^{\m}$, respectively. 
The numerical results in Figure~\ref{fig:04} confirm the sharpness of these theoretical predictions.

\section{Analysis of FSM}\label{sec:proofFM}

This section is devoted to proving Lemma~\ref{Lem1.1} and Theorem~\ref{Thm01}.  
The proof proceeds in three main steps. First, in Section~\ref{subsec:proofFM-4}, we establish preliminary results, including the regularity properties of eigenfunctions and related estimates.  
Next, in Section~\ref{subsec:proofFM-2}, we connect the FSM solution to the exact eigenpair through the Feshbach-Schur map.
Finally, in Section~\ref{subsec:proofFM-3}, we derive a perturbation bound that controls the error in the subspace spanned by the first $n$ eigenfunctions.  Combining these steps leads to the main convergence results of FSM.

Throughout Sections \ref{sec:proofFM} and \ref{sec:proofAR}, the parameters $s, b$ and $l$ are fixed and retain the same meaning. In particular, $s>\max\{d/2-2,-1\}$ denotes the regularity index of the potential $V$, and we set $b = \min\{(s+2-d/2)^{\m}, s+1,2\}>0$ and $l=2-b=\max\{-(s-d/2)^{\m},1-s,0\}$. For any $-1 \leq t \leq s$, the following index conditions are satisfied
\begin{equation}\label{Eq-5.0-01}
    t \leq \min\{t+l, s\}, \qquad
    t + d/2 < t+l + s, \qquad
    t+l + s \geq 0.
\end{equation}
\subsection{Preliminary results}
\label{subsec:proofFM-4}
We first provide the boundedness property of potential $V$ satisfying  Assumption~\ref{Asp::FSM}.
\begin{lemma}\label{Lem5.1}(Theorem A.1 in \cite{behzadan2021multiplication})
    Let $(\alpha,\beta,\gamma) \in \mathbb{R}^3$ belong to the index set
    \[ 
    I:=\bigl\{(x,y,z)\in \mathbb{R}^3|\  z \leq \min\{x,y
    \},\ z+d/2< x+y,\ x+y \geq 0\bigr\}.
    \]
    Then the embedding $H^\alpha(\cdot) \times H^\beta(\cdot) \hookrightarrow H^\gamma(\cdot)$ holds, where $H^{s}(\cdot)$ denotes $H^{s}$ or $H^{s}(\mathbb{R}^d)$. That is,  for all $u \in H^\alpha(\cdot)$, $v \in H^\beta(\cdot)$,
    \begin{equation}
        \|uv\|_{H^\gamma(\cdot)} \lesssim \| u \|_{H^\alpha(\cdot)} \|v\|_{H^\beta(\cdot)}.
    \end{equation}
\end{lemma}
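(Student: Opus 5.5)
The plan is to prove the estimate on the Fourier side, treating the periodic case (Fourier series on $\mathbb{Z}^d$) and the whole-space case (Fourier transform on $\mathbb{R}^d$) in parallel. Every step below uses only the weight $\langle \bm k\rangle:=(4\pi^2|\bm k|^2+1)^{1/2}$ and discrete or continuous Young/Hölder inequalities, so it transfers verbatim between the two settings. Since $\widehat{uv}=\hat u*\hat v$, we may assume $\hat u,\hat v\ge 0$. We then split into three cases according to the signs of $\gamma$, $\alpha$, $\beta$.

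\emph{Case 1: $\gamma\ge 0$.} Then $\alpha,\beta\ge\gamma\ge 0$. We use the elementary bound $\langle \bm k\rangle^\gamma\lesssim \langle \bm k-\bm j\rangle^\gamma+\langle \bm j\rangle^\gamma$, which splits $\langle\bm k\rangle^\gamma(\hat u*\hat v)_{\bm k}$ into two symmetric convolutions. The first is
\[
(a\langle\cdot\rangle^{\gamma-\alpha})*(c\langle\cdot\rangle^{-\beta}), \qquad a=\langle\cdot\rangle^\alpha\hat u,\quad c=\langle\cdot\rangle^\beta\hat v\in\ell^2 .
\]
By Hölder, $a\langle\cdot\rangle^{\gamma-\alpha}\in\ell^p$ for every $1/p<1/2+(\alpha-\gamma)/d$ (with $p\ge1$), and $c\langle\cdot\rangle^{-\beta}\in\ell^q$ for every $1/q<1/2+\beta/d$. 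Young's inequality into $\ell^2$ requires $1/p+1/q=3/2$. Such $p,q\in[1,2]$ exist exactly when $(\alpha-\gamma)+\beta>d/2$, which is the hypothesis $\gamma+d/2<\alpha+\beta$. When one of the admissible ranges already contains $1$, we simply take that exponent equal to $1$. The second convolution is handled symmetrically, with the roles of $\alpha$ and $\beta$ exchanged.

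\emph{Case 2: $\gamma<0\le\min\{\alpha,\beta\}$.} Here we bound $\langle\bm k\rangle^\gamma(\hat u*\hat v)$ by duality: it suffices to estimate $\sum_{\bm k}\hat w_{\bm k}(\hat u*\hat v)_{\bm k}$ for $\hat w=\langle\cdot\rangle^{\gamma}e$ with $e\in\ell^2$. This trilinear sum is of the form $\sum_{\bm j,\bm m}A_{\bm j}B_{\bm m}C_{\bm j+\bm m}$ with
\[
A=\langle\cdot\rangle^{-\alpha}a,\qquad B=\langle\cdot\rangle^{-\beta}c,\qquad C=\langle\cdot\rangle^{\gamma}e .
\]
By Hölder these lie in $\ell^{p_1},\ell^{p_2},\ell^{p_3}$ for $1/p_i<1/2+\sigma_i/d$, where $\sigma=(\alpha,\beta,-\gamma)$. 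The three-function Young inequality needs $\sum 1/p_i=2$ with each $p_i\ge1$. This can be arranged iff $\alpha+\beta-\gamma>d/2$, which again is exactly the hypothesis.

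\emph{Case 3: one of $\alpha,\beta$ is negative,} say $\alpha<0$; then $\gamma\le\alpha<0$ and $\beta\ge-\alpha>0$. We argue by duality:
\[
\|uv\|_{H^\gamma}=\sup_{\|w\|_{H^{-\gamma}}\le1}|\langle \bar u, vw\rangle|\le\|u\|_{H^\alpha}\sup\|vw\|_{H^{-\alpha}} .
\]
The new triple $(\beta,-\gamma,-\alpha)$ lies in $I$: indeed $-\alpha\le\beta$ follows from $\alpha+\beta\ge0$; $-\alpha\le-\gamma$ from $\gamma\le\alpha$; $-\alpha+d/2<\beta-\gamma$ is the original strict inequality; and $\beta-\gamma\ge0$ holds. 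Moreover $-\alpha>0$, so the new triple falls under Case 1, which closes the argument.

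The main obstacle is the exponent bookkeeping in Cases 1 and 2: one must choose Hölder/Young exponents that stay in $[1,2]$ while the strict inequality $\gamma+d/2<\alpha+\beta$ supplies exactly the needed room. In particular, care is needed when some $\sigma_i\ge d/2$, where the weighted sequence is already in $\ell^1$ and the corresponding exponent must be pinned at $1$. I would first carry this out with explicit choices, e.g.\ distributing the total deficit $\alpha+\beta-\gamma-d/2>0$ equally among the factors and capping at $\ell^1$. Endpoint equality $\gamma=\min\{\alpha,\beta\}$ causes no trouble, because it only sets a Hölder exponent equal to $2$.
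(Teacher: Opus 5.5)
Your proof is correct, but it takes a different route from the paper.

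\textbf{The paper's route.} The paper does not prove the multiplication estimate from scratch. It cites Theorem~A.1 of Behzadan--Holst for the general case. It then patches only the single configuration that the cited theorem excludes: $\gamma<0\le\min\{\alpha,\beta\}$ with $\alpha+\beta=0$, i.e.\ $\alpha=\beta=0$ and $\gamma<-d/2$. This case is handled by
$H^0\times H^0\hookrightarrow L^1\hookrightarrow (L^\infty)^\star\hookrightarrow (H^{-\gamma})^\star=H^\gamma$.

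\textbf{Your route.} You give a self-contained Fourier-side proof using weighted H\"older plus the two- and three-function Young inequalities. You then reduce the case $\min\{\alpha,\beta\}<0$ to Case~1 by duality, and your check that $(\beta,-\gamma,-\alpha)\in I$ with $-\alpha>0$ is correct. In your scheme the paper's exceptional endpoint is not special. It is the subcase of Case~2 with $\sigma_3=-\gamma>d/2$, where you take $(p_1,p_2,p_3)=(2,2,1)$; this is exactly the Fourier-side counterpart of the paper's $L^1\hookrightarrow H^\gamma$ step.

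\textbf{Exponent bookkeeping.} Your bookkeeping goes through. Each admissible range for $1/p_i$ is an interval with left endpoint $1/2$, and it contains $1$ only when the weight exponent exceeds $d/2$. The strict inequality $\gamma+d/2<\alpha+\beta$ is precisely what makes the suprema sum to more than $3/2$ (Case~1) or $2$ (Case~2), or reach that value with attainment.

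\textbf{Comparison.} Your approach treats the periodic space $H^s$ and $H^s(\mathbb{R}^d)$ simultaneously and transparently, whereas the paper simply asserts the cited result for both settings. It also shows that the hypothesis $\alpha+\beta\ge 0$ is used only in the duality step of Case~3. The paper's approach is much shorter, but it depends on an external theorem whose hypotheses you have to track.

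\textbf{One sentence to add.} For negative indices the product $uv$ is defined by continuous extension from trigonometric polynomials (resp.\ Schwartz functions). Hence it suffices to prove the bound for such $u,v$, using that $|\hat u|*|\hat v|$ dominates $|\widehat{uv}|$.
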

\begin{proof}
The only case not covered by Theorem A.1 in \cite{behzadan2021multiplication} occurs when $\gamma<0$ and $\min\{\alpha,\beta\}\geq 0$, where we require $\alpha+\beta\geq0$ rather than $\alpha+\beta > 0$. In this remaining case $\alpha=\beta=0$, we have 
\[
H^{0}(\cdot)\times H^{0}(\cdot) \hookrightarrow L^{1}(\cdot)\hookrightarrow\bigl(L^{\infty}(\cdot)\bigr)^{\star}\hookrightarrow \bigr(H^{-\gamma}(\cdot)\bigl)^{\star}=H^{\gamma}(\cdot),
\]
where the last embedding follows from  $H(\cdot)^{-\gamma}\hookrightarrow L^{\infty}(\cdot)$ using $-\gamma>d/2$.

\end{proof}

Combining this lemma with \eqref{Eq-5.0-01}, we obtain the following corollary.
\begin{lemma}\label{Lem5.2}
Let $V \in H^s$. Then for any $-1 \leq t \leq s$ and $\phi \in H^{t+l}$,
\begin{equation}\label{eq:Lem5.2-01}
\|\mathcal{V} \phi\|_{H^{t}} \lesssim \|\phi\|_{H^{t+l}}.
\end{equation}
\end{lemma}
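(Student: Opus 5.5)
The plan is to apply Lemma~\ref{Lem5.1} with the triple $(\alpha,\beta,\gamma)=(s,\,t+l,\,t)$, taking $u=V\in H^s$ and $v=\phi\in H^{t+l}$. This gives
\[
\|\mathcal{V}\phi\|_{H^t}=\|V\phi\|_{H^t}\lesssim \|V\|_{H^s}\|\phi\|_{H^{t+l}}.
\]
Since $V$ is fixed, $\|V\|_{H^s}$ is absorbed into the implicit constant, and this is exactly \eqref{eq:Lem5.2-01}. So the only work is to check that $(s,t+l,t)$ lies in the index set $I$, i.e.
\[
t\le \min\{s,t+l\},\qquad t+d/2< s+t+l,\qquad s+t+l\ge 0 .
\]
These are precisely the conditions recorded in \eqref{Eq-5.0-01}, so I would simply verify \eqref{Eq-5.0-01} from the definition $l=\max\{-(s-d/2)^{\m},\,1-s,\,0\}$ and the range $-1\le t\le s$.

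The first condition holds because $t\le s$ by hypothesis and $l\ge 0$ gives $t\le t+l$.

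For the second condition, note that it reduces to $l>d/2-s$. This holds because $l\ge -(s-d/2)^{\m}=d/2-s+\varepsilon$. Here the strict inequality is what forces the $\varepsilon$-loss in $b$.

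For the third condition, $s+t+l\ge s-1+l\ge s-1+(1-s)=0$, using $t\ge -1$ and $l\ge 1-s$.

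The only delicate point is the boundary case $s+t+l=0$, together with possibly negative $\gamma=t$. This case is exactly the one handled in the proof of Lemma~\ref{Lem5.1} (the case $\alpha+\beta\ge 0$ rather than $>0$), so no extra argument is needed. I would also remark that the bound holds uniformly in $t$ over the compact range $[-1,s]$, which is immaterial here since the constant may depend on $t$.
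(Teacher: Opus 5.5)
Your proposal is correct and matches the paper's argument: Lemma~\ref{Lem5.2} is obtained as an immediate corollary of Lemma~\ref{Lem5.1} with $(\alpha,\beta,\gamma)=(s,t+l,t)$ via the index conditions \eqref{Eq-5.0-01}, which the paper only states and which you verify explicitly and correctly. One small aside on the boundary case: $s+t+l=0$ forces $d=1$, $t=-1$ and $l=1-s$, so it coincides with the $\alpha=\beta=0$ case treated in the proof of Lemma~\ref{Lem5.1} only when $s=0$; otherwise one index is negative and the case is covered by the cited Theorem~A.1 itself, but since Lemma~\ref{Lem5.1} is stated on all of $I$, nothing further is needed either way.
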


Next, we establish boundedness and regularity results for the operator
$\mathcal{H}=\mathcal{T}+\mathcal{V}$. We note that Lemmas~\ref{Lem5.3} and \ref{Lem5.4} hold under the sole assumption that $\mathcal{V}$ satisfies \eqref{eq:Lem5.2-01} (without requiring $\mathcal{V}$ to be a multiplication operator).

\begin{lemma}\label{Lem5.3}
 Let $\mathcal{H}=\mathcal{T}+\mathcal{V}$ with $\mathcal{V}$ satisfying \eqref{eq:Lem5.2-01}. Then there exists a constant $M>0$ such that
\begin{equation}\label{Eq-Lem5.3-01}
\tfrac12\|\phi\|_{H^1}^2 \leq \langle \phi, (\mathcal{H}+M)\phi\rangle \lesssim \|\phi\|_{H^1}^2,\qquad \forall\, \phi \in H^1.
\end{equation}
\end{lemma}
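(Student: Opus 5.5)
The plan is to show that the form $\phi\mapsto\langle\phi,\mathcal V\phi\rangle$ is relatively form-bounded with respect to $\mathcal T$ with relative bound zero, using only \eqref{eq:Lem5.2-01} with $t=-1$ and interpolation. Taking $t=-1$ in \eqref{eq:Lem5.2-01} gives
\[
|\langle \phi,\mathcal V\phi\rangle| \le \|\phi\|_{H^1}\|\mathcal V\phi\|_{H^{-1}} \lesssim \|\phi\|_{H^1}\|\phi\|_{H^{l-1}} .
\]
Since $l=2-b$ with $b>0$, we have $l-1<1$. Also $l\ge 0$, so $l-1\ge -1$; in any case $\|\phi\|_{H^{l-1}}\le\|\phi\|_{H^{\max\{l-1,0\}}}$.

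The upper bound follows immediately. We have $\langle\phi,\mathcal T\phi\rangle\le\|\phi\|_{H^1}^2$ and $M\|\phi\|_{L^2}^2\le M\|\phi\|_{H^1}^2$. The potential term is bounded by $\|\phi\|_{H^1}\|\phi\|_{H^{l-1}}\le\|\phi\|_{H^1}^2$, because $l-1\le 1$.

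For the lower bound, set $\theta=\max\{l-1,0\}\in[0,1)$ and interpolate in Fourier space:
\[
\|\phi\|_{H^\theta}\le\|\phi\|_{H^1}^{\theta}\|\phi\|_{L^2}^{1-\theta}.
\]
Combining this with Young's inequality gives, for any $\delta>0$,
\[
C\|\phi\|_{H^1}\|\phi\|_{H^\theta}\le C\|\phi\|_{H^1}^{1+\theta}\|\phi\|_{L^2}^{1-\theta}\le \delta\|\phi\|_{H^1}^2 + C_\delta\|\phi\|_{L^2}^2 .
\]
Young applies here with exponents $2/(1+\theta)$ and $2/(1-\theta)$, which is legitimate precisely because $\theta<1$. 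Choose $\delta=1/2$. By the definition \eqref{Eq-1.1-03}, $\langle\phi,\mathcal T\phi\rangle=\sum 4\pi^2|\bm k|^2|\hat\phi_{\bm k}|^2=\|\phi\|_{H^1}^2-\|\phi\|_{L^2}^2$. Hence
\[
\langle\phi,(\mathcal H+M)\phi\rangle\ge \|\phi\|_{H^1}^2-\|\phi\|_{L^2}^2-\tfrac12\|\phi\|_{H^1}^2-C_{1/2}\|\phi\|_{L^2}^2+M\|\phi\|_{L^2}^2 .
\]
Taking $M=1+C_{1/2}$ yields $\tfrac12\|\phi\|_{H^1}^2$.

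The main point to check is that $\theta<1$ strictly, i.e.\ $b>0$. This is guaranteed by $s>\max\{d/2-2,-1\}$ through the definition of $b$. One also needs $t=-1$ to be admissible in Lemma~\ref{Lem5.2}, which holds since $-1\le s$. If $\mathcal V$ is not symmetric, $\langle\phi,\mathcal V\phi\rangle$ may be complex. In that case the bound is applied to its real part, and the quantity $\langle\phi,(\mathcal H+M)\phi\rangle$ is understood accordingly; for real $V$ it is real.
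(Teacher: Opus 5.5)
Your proof is correct and follows essentially the same route as the paper: bound the potential form using \eqref{eq:Lem5.2-01}, interpolate between $L^2$ and $H^1$, absorb with Young's inequality (relative form bound zero since $l<2$), and choose $M$ accordingly. The only difference is that the paper applies \eqref{eq:Lem5.2-01} with $t=-l/2$ and pairs symmetrically in $H^{l/2}\times H^{-l/2}$, whereas you take the endpoint $t=-1$ and pair in $H^{1}\times H^{-1}$; in both cases the potential term is controlled by a norm strictly below $H^1$, and the absorption argument is the same.
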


\begin{proof}
By Lemma~\ref{Lem5.2}, there exists a constant $C_V$ such that
\begin{equation}
\langle \phi, \mathcal{V}\phi \rangle 
\leq \|\phi\|_{H^{l/2}} \|\mathcal{V}\phi\|_{H^{-l/2}} 
\leq C_V \|\phi\|_{H^{l/2}}^2.
\end{equation}
By the Sobolev interpolation inequality and Young's inequality, for any $\delta>0$,
\[
\|\phi\|_{H^{l/2}}^2 
\leq \delta \|\phi\|_{H^1}^2 + C_\delta \|\phi\|_{L^2}^2,
\]
where $C_\delta = \delta^{-l/(2-l)}$.
Choosing $\delta = (2C_V)^{-1}$, we deduce
\[
 \tfrac{1}{2}\|\phi\|_{H^1}^2 - (1 + C_V C_\delta)\|\phi\|_{L^2}^2
 \leq \langle \phi, \mathcal{H}\phi \rangle 
 \leq\tfrac{3}{2}\|\phi\|_{H^1}^2 + (-1 + C_V C_\delta)\|\phi\|_{L^2}^2.
\]
This implies \eqref{Eq-Lem5.3-01} by taking $M = 1 + C_V C_\delta$. 
\end{proof}     

\begin{lemma}\label{Lem5.4}
Let $\mathcal{H}=\mathcal{T}+\mathcal{V}$ with $\mathcal{V}$ satisfying \eqref{eq:Lem5.2-01}, and let $(\lambda,\psi)$ be an eigenpair of $\mathcal{H}$ with $\lambda\lesssim 1$. Then 
    \begin{equation}\label{Eq-Lem5.4-01}
    \|\psi\|_{H^{s+2}} \lesssim \|\psi\|_{L^2}.
    \end{equation}
\end{lemma}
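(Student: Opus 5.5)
The proof is an elliptic bootstrap. We rewrite the eigenvalue equation as
\[
(\mathcal{T}+1)\psi=(\lambda+1)\psi-\mathcal{V}\psi,
\]
and use that $(\mathcal{T}+1)^{-1}$ is an isometry from $H^{t}$ onto $H^{t+2}$ in the norm \eqref{Eq-1.1-03}. This gives
\[
\|\psi\|_{H^{t+2}}\le (|\lambda|+1)\|\psi\|_{H^{t}}+\|\mathcal{V}\psi\|_{H^{t}} .
\]
By Lemma~\ref{Lem5.2}, for every $-1\le t\le s$,
\[
\|\psi\|_{H^{t+2}}\lesssim \|\psi\|_{H^{t}}+\|\psi\|_{H^{t+l}},
\]
with a constant depending only on $V$ and the bound $\lambda\lesssim 1$. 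The plan is to apply this repeatedly, each time gaining $2-l=b>0$ derivatives, until $t=s$.

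\textbf{Starting point.} We use $\psi\in H^1$ with $\|\psi\|_{H^1}\lesssim\|\psi\|_{L^2}$. To get the $H^1$ bound, take $\phi=\psi$ in Lemma~\ref{Lem5.3}:
\[
\tfrac12\|\psi\|_{H^1}^2\le\langle\psi,(\mathcal{H}+M)\psi\rangle=(\lambda+M)\|\psi\|_{L^2}^2\lesssim\|\psi\|_{L^2}^2 .
\]

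\textbf{Iteration.} Set $t_0=-1$, so $t_0+l=1-b<1$ and the right-hand side is controlled by $\|\psi\|_{H^1}$. This yields $\|\psi\|_{H^{1+b}}\lesssim\|\psi\|_{L^2}$. Inductively, suppose $\|\psi\|_{H^{\sigma}}\lesssim\|\psi\|_{L^2}$ for some $\sigma$. Choose
\[
t=\min\{\sigma-l,\;s\}.
\]
We check that $t\ge-1$: since $b>0$, after the first step we have $\sigma\ge1+b$, so $\sigma-l\ge 1+b-(2-b)=2b-1>-1$; and $s>-1$ by assumption. Also $t\le\sigma$, since the index condition \eqref{Eq-5.0-01} gives $t\le t+l$. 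Hence both $\|\psi\|_{H^t}$ and $\|\psi\|_{H^{t+l}}$ are bounded by $\|\psi\|_{H^\sigma}$, and we obtain $\|\psi\|_{H^{t+2}}\lesssim\|\psi\|_{L^2}$.

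As long as $t=\sigma-l<s$, the new regularity index is $t+2=\sigma+b$, a fixed gain of $b$. So after finitely many steps (about $(s+3)/b$, a number depending only on $s,d$) we reach $t=s$, which gives $\|\psi\|_{H^{s+2}}\lesssim\|\psi\|_{L^2}$. The constants compound only finitely often, so they remain independent of $\psi$.

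\textbf{Where the work lies.} The main check is that every intermediate $t$ stays in the admissible range $-1\le t\le s$ of Lemma~\ref{Lem5.2}, so that the index conditions \eqref{Eq-5.0-01} hold at each step, and that the gain per step is uniformly $b>0$. This positivity is exactly what Assumption~\ref{Asp::FSM} guarantees, through $l=2-b<2$.

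A minor subtlety is the first step: $H^1$ regularity must be derived from the weak formulation \eqref{Eq-1.1-02} rather than presumed. Testing \eqref{Eq-1.1-02} with $e_{\bm k}$ shows that the identity $(\mathcal{T}+1)\psi=(\lambda+1)\psi-\mathcal{V}\psi$ holds coefficientwise in $H^{-1}$. This is enough to run the argument, since $\mathcal{V}\psi\in H^{-1}$ by Lemma~\ref{Lem5.2} with $t=-1$.
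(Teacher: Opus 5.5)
Your proposal is correct and is essentially the paper's proof: the $H^1$ bound from Lemma~\ref{Lem5.3}, followed by the bootstrap $\psi\in H^{t+l}\Rightarrow\psi\in H^{t+2}$ for $-1\le t\le s$ obtained from $(\mathcal{T}+1)\psi=(\lambda+1-\mathcal{V})\psi$ and Lemma~\ref{Lem5.2}, gaining $b=2-l>0$ per step. One slip: $t_0=-1$ only returns $H^{1}$. The first actual gain comes from $t_0=1-l=b-1$, which lies in $[-1,s]$ because $0<b\le s+1$, and this is exactly what your rule $t=\min\{\sigma-l,s\}$ gives with $\sigma=1$, so the iteration is unaffected.
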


\begin{proof}
By Lemma~\ref{Lem5.3}, there exists $M>0$ such that 
\[
\|\psi\|_{H^1}^2 \leq2 \langle \psi, (\mathcal{H}+M)\psi \rangle 
= 2(\lambda+M)\|\psi\|_{L^2}^2,
\]
which implies $\|\psi\|_{H^1}\lesssim \|\psi\|_{L^2}$. To obtain higher regularity, we employ a bootstrap argument. For $-1 \le t \le s$, we rewrite the eigenvalue equation as
$
(\mathcal{T}+1)\psi = (\lambda+1 - \mathcal{V})\psi.
$
Taking the $H^t$ norm gives
\begin{equation}
\|\psi\|_{H^{t+2}} \lesssim \|\psi\|_{H^t} + \|\mathcal{V}\psi\|_{H^t}
\lesssim \|\psi\|_{H^t} + \|\psi\|_{H^{t+l}}.
\end{equation}
Since $l<2$, the estimate implies the bootstrap implication
$
\psi \in H^{t+l} \;\Longrightarrow\; \psi \in H^{t+2}.
$
Starting from $\psi \in H^1\hookrightarrow H^{-1+l}$, a finite iteration of this argument yields \eqref{Eq-Lem5.4-01}.
\end{proof}
\begin{remark}
    Lemma~\ref{Lem5.4} also holds for discrete eigenpair $(\lambda^N,\psi^N)$ of $\mathcal{H}$ in $\mathscr{X}_N$ with $\lambda^N\lesssim 1$. 
    The only modification in the proof is to take the $H^t$ norm on the projected equation $(\mathcal{T}+1)\psi^N=\mathcal{P}_{N}(\lambda^N+1-\mathcal{V})\psi^N.$
\end{remark}

Finally, the following lemma summarizes the truncation error and inverse inequality with respect to $N$ in the Sobolev norm. The proof follows from the Cauchy-Schwarz inequality and is deferred to Appendix~\ref{proofLem5.5}.
\begin{lemma}\label{Lem5.5}
    Let  $\psi\in H^{m}$ for some $m\in \mathbb{R}$. Then the following estimates hold
    \begin{enumerate}

        \item [$(\mathrm{i})$] For any $t<m$
        \begin{equation}\label{eq-Lem5.5-01}
            \|\mathcal{P}_N^\perp \phi\|_{H^t} \leq (2\pi N)^{t-m}\|\phi\|_{H^{m}};
        \end{equation}
        
        \item [$(\mathrm{ii})$] For any $t\in \mathbb{N}$ with $t+d/2 < m$,
        \begin{equation}\label{eq-Lem5.5-02}
            \|\mathcal{P}_N^\perp \phi\|_{W^{t,\infty}} \lesssim N^{t-m+d/2}\|\phi\|_{H^{m}};
        \end{equation}

        \item [$(\mathrm{iii})$] For any $t\in \mathbb{N}$ ,
        \begin{equation}\label{eq-Lem5.5-03}
            \|\mathcal{P}_N \phi\|_{W^{t,\infty}} \lesssim N^{\max\{-0^{\m},t-m+d/2\}}\|\phi\|_{H^{m}}.
        \end{equation}
        Here, the $W^{t,\infty}$ norm is define as 
        \[
            \|\phi\|_{W^{t,\infty}} :=\max_{|\bm \alpha|\leq t}\| \partial^{\bm \alpha}\phi\|_{L^{\infty}}.
        \]
    \end{enumerate}
\end{lemma}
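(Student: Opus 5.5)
The three estimates all come from the same Fourier-side computation. Write $w_{\bm k}:=4\pi^2|\bm k|^2+1$, so that $\|\phi\|_{H^m}^2=\sum_{\bm k}w_{\bm k}^m|\hat\phi_{\bm k}|^2$. The basic geometric fact I will use is that $\bm k\notin\mathscr K_N$ means $\|\bm k\|_\infty\ge N+1$. Hence $|\bm k|\ge N$ and $w_{\bm k}\ge (2\pi N)^2$. Conversely, $\bm k\in\mathscr K_N$ gives $|\bm k|\le\sqrt d\,N$.

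For (i), no Cauchy--Schwarz is needed. I will write
\[
\|\mathcal P_N^\perp\phi\|_{H^t}^2=\sum_{\bm k\notin\mathscr K_N}w_{\bm k}^{t-m}\,w_{\bm k}^m|\hat\phi_{\bm k}|^2 .
\]
Since $t-m<0$, the factor $w_{\bm k}^{t-m}$ is decreasing in $w_{\bm k}$, so it is at most $(2\pi N)^{2(t-m)}$ on the tail. Taking square roots gives \eqref{eq-Lem5.5-01} with the exact constant.

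For (ii), I will bound pointwise, using $|\partial^{\bm\alpha}e_{\bm k}|\le (2\pi|\bm k|)^{|\bm\alpha|}$ and $|\bm\alpha|\le t$:
\[
|\partial^{\bm\alpha}\mathcal P_N^\perp\phi(\bm x)|\le\sum_{\bm k\notin\mathscr K_N}(2\pi|\bm k|)^{t}|\hat\phi_{\bm k}|
\le \|\phi\|_{H^m}\Bigl(\sum_{\bm k\notin\mathscr K_N}(2\pi|\bm k|)^{2t}w_{\bm k}^{-m}\Bigr)^{1/2}.
\]
The second step is Cauchy--Schwarz after inserting $w_{\bm k}^{m/2}w_{\bm k}^{-m/2}$. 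On the tail, the summand is $\lesssim |\bm k|^{2t-2m}$. Comparing the lattice sum with the integral $\int_{|\bm y|\ge N/2}|\bm y|^{2t-2m}\,d\bm y$, which converges because $2m-2t>d$, bounds the sum by a multiple of $N^{2t-2m+d}$. This yields \eqref{eq-Lem5.5-02}.

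For (iii), the same Cauchy--Schwarz argument applies with the sum now restricted to $\bm k\in\mathscr K_N$. The summand is $\lesssim (1+|\bm k|)^{2t-2m}$, and shell counting bounds the sum by $\sum_{r\le\sqrt d N}(1+r)^{2t-2m+d-1}$. This last sum is bounded by a multiple of:
\begin{itemize}
\item $N^{2t-2m+d}$ if the exponent $2t-2m+d$ is positive;
\item $\log N$ if it is zero;
\item a constant if it is negative.
\end{itemize}
After taking the square root, the logarithm is absorbed into $N^{\varepsilon}$ for arbitrarily small $\varepsilon>0$. This is exactly what the exponent $\max\{\cdot,\,t-m+d/2\}$ in \eqref{eq-Lem5.5-03} encodes, where the first entry is read as an arbitrarily small nonnegative power. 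The only step needing any care is this borderline exponent in (iii), together with the lattice-sum-to-integral comparison in (ii). Both are handled by elementary monotone comparison on dyadic or unit shells, with constants depending only on $d$, $t$ and $m$.
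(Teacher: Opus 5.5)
Your proposal is correct and follows the paper's appendix proof essentially line by line. Part (i) is a termwise weight comparison on the tail, and parts (ii) and (iii) apply Cauchy--Schwarz against the lattice sum of $w_{\bm k}^{t-m}$, over the tail for (ii) and over $\mathscr K_N$ for (iii), with the borderline exponent absorbed into $N^{\varepsilon}$; your remark that the tail is $\|\bm k\|_\infty\ge N+1$ is slightly more careful than the paper's indexing.

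The only cosmetic fix is in (iii): bound $|\partial^{\bm\alpha}e_{\bm k}|\le w_{\bm k}^{t/2}$, as the paper does, rather than by $(2\pi|\bm k|)^{t}$. The latter fails at $\bm k=\bm 0$ with $\bm\alpha=\bm 0$ and $t\ge 1$, and your stated summand bound $(1+|\bm k|)^{2t-2m}$ already presupposes the correct weight.
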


\subsection{The Feshbach-Schur map}
\label{subsec:proofFM-2}
This section introduces the Feshbach-Schur map, which maps the original problem \eqref{Eq-1.1-02} in $H^1$ to discrete problem in $\mathscr{X}_N$. 
Decompose $\psi \in H^1$ as $\psi = \psi^N + \psi^\perp$, where $\psi^N =\mathcal{P}_N\psi$ and $\psi^\perp =\mathcal{P}^{\perp}_N\psi$. Note that the operator $\mathcal{T}$ commutes with $\mathcal{P}_N$ and $\mathcal{P}_N^\perp$, the eigenvalue equation $\mathcal{H}\psi=\lambda\psi$ can be rewritten as: 
\begin{subequations}
\begin{align}
\label{Eq-5.2-01a}
    \langle \phi^N, (\mathcal{H}-\lambda)\psi^N\rangle  
&= -\langle \phi^N, \mathcal{V}\psi^\perp\rangle,
&&\forall\phi^N\in \mathscr{X}_N
\\
\label{Eq-5.2-01b}
\langle \phi^\perp, (\mathcal{H}-\lambda)\psi^\perp\rangle 
&= -\langle \phi^\perp, \mathcal{V}\psi^N\rangle , 
&&\forall\,\phi^\perp\in \mathscr{X}^\perp_N.
\end{align}
\end{subequations}
Here $\mathscr{X}_N^\perp$ denotes the orthonormal complement of $\mathscr{X}_N$ in $H^1$.
By Lemma~\ref{Lem5.6} (i) , \eqref{Eq-5.2-01b} is uni-solvent for $\psi^\perp$, obtaining $\psi^\perp =-\mathcal{G}_N(\lambda) \mathcal{V} \psi^N$, where 
\[\mathcal{G}_N(\lambda):= \mathcal{P}_N^\perp(\mathcal{H}-\lambda)^{-1}_{\mathscr{X}_N^\perp}\mathcal{P}_N^\perp\]
is the solution operator.
Substituting $\psi^\perp$ into \eqref{Eq-5.2-01a} yields a reduced nonlinear eigenvalue problem in finite-dimensional space $\mathscr{X}_N$,
\begin{equation} 
\label{Eq-5.2-02}
    \langle \phi^N, \left(\mathcal{H}- \mathcal{V}\mathcal{G}_N(\lambda)\mathcal{V}\right)\psi^N \rangle  = \lambda \langle \phi^N, \psi^N\rangle,\quad \forall\,\phi^N\in \mathscr{X}_N.
\end{equation}
Define the self-adjoint operator
\[
\mathcal{F}_N(\lambda^{\star} ) = \mathcal{V} \mathcal{G}_N(\lambda^{\star}) \mathcal{V}.
\] 
The mapping $\mathcal{H}\mapsto \mathcal{P}_N\bigl(\mathcal{H}-\mathcal{F}_N(\lambda)\bigr)\mathcal{P}_N$ is referred to as the Feshbach-Schur map. The above construction establishes a one-to-one correspondence between the eigenspaces,
\[
    \mathcal{H}\psi = \lambda\psi \quad 
    \Longleftrightarrow \quad
    \mathcal{P}_N\bigl(\mathcal{H}-\mathcal{F}_{N}(\lambda)\bigr)\mathcal{P}_N\psi^N = \lambda\psi^N.
\]
Now fix an value $\lambda^{\star} \in \mathbb{R}$ on the left-hand side of \eqref{Eq-5.2-02}, denote the algebraic eigenpairs of $\mathcal{P}_N\bigl[\mathcal{H}-\mathcal{F}_N(\lambda^\star)\bigr]\mathcal{P}_N$ in $\mathscr{X}_N$ by
$\big( \sigma_n^N(\lambda^{\star}), \psi^N_n(\lambda^{\star}) \bigr) \in \mathbb{R} \times \mathscr{X}_N$, which satisfy
\begin{equation}
\label{Eq-5.2-03}
       \langle \phi^N, (\mathcal{H}-\mathcal{F}_N(\lambda^{\star}))\psi^N_n(\lambda^{\star}) \rangle  = \sigma_n^N(\lambda^{\star}) \langle \phi^N, \psi^N_n(\lambda^{\star})\rangle,\quad \forall\,\phi^N\in \mathscr{X}_N.
\end{equation}

The following lemma establishes a connection between the eigenpairs of $\mathcal{H}$ and those of $\mathcal{H}-\mathcal{F}_N(\lambda^\star)$,
implying that \eqref{Eq-5.2-03} reproduces the exact $n$-th eigenpair when $\lambda^{\star} = \lambda_n$. In this sense, the operator $\mathcal{F}_N(\lambda_n)$ can be interpreted as a perturbation operator that connects the FSM solution with the exact solution.
\begin{lemma}\label{Lem5.6}
    Let 
    $\{(\lambda_n,\psi_n)\}_{n\geq 1}$
    and $\{(\sigma_n^N(\lambda^{\star}),\psi^N_n(\lambda^{\star})\}_{n\geq 1}$ denote the solutions of \eqref{Eq-1.1-02} and \eqref{Eq-5.2-03}, respectively, all listed in non-decreasing order. Then there exists $N_0$ such that for all $N\geq N_0$, the following hold:
    
    \begin{enumerate}
    \item[$\mathrm{(i)}$] For $\lambda^{\star}\leq \pi^2N^2$, \eqref{Eq-5.2-01b} is uni-solvent for $\psi^\perp =-\mathcal{G}_N(\lambda^\star) \mathcal{V} \psi^N\in \mathscr{X}_N^\perp$;
    
    \item[$\mathrm{(ii)}$] For $\lambda^{\star} \leq \pi^2 N^2$,  $\sigma_n^N(\lambda^{\star})$ is continuous and non-increasing in $\lambda^{\star}$;
    
    \item[$\mathrm{(iii)}$] If $\lambda^\star=\lambda_n \leq \pi^2N^2$, then $\sigma_n^N(\lambda^\star) = \lambda_n$ and there exists $\psi_n \in \ker(\mathcal{H}-\lambda_n)$ such that $\psi^N_n(\lambda^\star) = \mathcal{P}_N\psi_n$.
    \end{enumerate}
\end{lemma}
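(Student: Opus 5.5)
Part (i) is a coercivity statement on the high-frequency space $\mathscr{X}_N^\perp$. For $\phi^\perp\in\mathscr{X}_N^\perp$, every nonzero Fourier mode has $\|\bm k\|_\infty\ge N+1$. Hence $\langle\phi^\perp,\mathcal{T}\phi^\perp\rangle\ge 4\pi^2(N+1)^2\|\phi^\perp\|_{L^2}^2$.

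The potential term is handled as in the proof of Lemma~\ref{Lem5.3}:
\[
|\langle\phi^\perp,\mathcal{V}\phi^\perp\rangle|\le C_V\|\phi^\perp\|_{H^{l/2}}^2 .
\]
Since $l<2$, Lemma~\ref{Lem5.5}(i) gives $\|\phi^\perp\|_{H^{l/2}}^2\lesssim N^{l-2}\|\phi^\perp\|_{H^1}^2$. For $\lambda^\star\le\pi^2N^2$ this yields
\[
\langle\phi^\perp,(\mathcal{H}-\lambda^\star)\phi^\perp\rangle\ge \bigl(1-\tfrac{1}{4}-CN^{l-2}\bigr)\|\phi^\perp\|_{H^1}^2-O(1)\|\phi^\perp\|_{L^2}^2 \gtrsim \|\phi^\perp\|_{H^1}^2
\]
for $N\ge N_0$. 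Here the $\tfrac14$ absorbs $\lambda^\star$, because $\lambda^\star\|\phi^\perp\|^2_{L^2}\le \tfrac14\cdot 4\pi^2N^2\|\phi^\perp\|_{L^2}^2$. Lax–Milgram on $\mathscr{X}_N^\perp$ then gives unique solvability of \eqref{Eq-5.2-01b}. The right-hand side $\mathcal{V}\psi^N$ lies in $H^{-1}$ by Lemma~\ref{Lem5.2} with $t=-1$, since $\psi^N\in H^{1}\subset H^{-1+l}$. We thus obtain $\mathcal{G}_N(\lambda^\star)$ as a bounded, self-adjoint, positive operator $H^{-1}\to H^1$ on the high-frequency part, with a uniform bound for $\lambda^\star\le\pi^2N^2$.

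For (ii), I would first differentiate the resolvent. By the resolvent identity, $\lambda^\star\mapsto\mathcal{G}_N(\lambda^\star)$ is continuous and in fact analytic in operator norm on $(-\infty,\pi^2N^2]$, with
\[
\tfrac{d}{d\lambda^\star}\mathcal{G}_N=\mathcal{G}_N^2\ge0 .
\]
Consequently $\mathcal{F}_N(\lambda^\star)=\mathcal{V}\mathcal{G}_N\mathcal{V}$ is a nondecreasing family of self-adjoint operators, since $\langle\phi,\mathcal{V}\mathcal{G}_N^2\mathcal{V}\phi\rangle=\|\mathcal{G}_N\mathcal{V}\phi\|^2\ge0$. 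The compressed operator $\mathcal{P}_N(\mathcal{H}-\mathcal{F}_N(\lambda^\star))\mathcal{P}_N$ is therefore a continuous, nonincreasing Hermitian matrix family on $\mathscr{X}_N$. By the Courant–Fischer min–max characterization, each ordered eigenvalue $\sigma_n^N(\lambda^\star)$ is continuous and non-increasing.

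Part (iii) is the main obstacle, namely the index matching. The Feshbach–Schur equivalence already shows that $\lambda_n$ is \emph{some} eigenvalue of the matrix at $\lambda^\star=\lambda_n$, with eigenvectors $\mathcal{P}_N\psi$ for $\psi\in\ker(\mathcal{H}-\lambda_n)$. Injectivity of $\psi\mapsto\mathcal{P}_N\psi$ holds because $\psi^\perp$ is determined by $\psi^N$. The remaining task is to show it is exactly the $n$-th eigenvalue, counted with multiplicity.

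To settle the index, I would compare the counting functions via a min–max identity on the quadratic form. For $\phi^N\in\mathscr{X}_N$, set $\phi=\phi^N-\mathcal{G}_N(\lambda^\star)\mathcal{V}\phi^N$. Then
\[
\langle\phi^N,(\mathcal{H}-\mathcal{F}_N(\lambda^\star)-\lambda^\star)\phi^N\rangle=\min_{\phi^\perp}\langle\phi^N+\phi^\perp,(\mathcal{H}-\lambda^\star)(\phi^N+\phi^\perp)\rangle ,
\]
which is the Schur complement property, using the positivity from (i). From this I would deduce that the number of eigenvalues of $\mathcal{H}$ below $\lambda^\star$ equals the number of $\sigma_j^N(\lambda^\star)$ below $\lambda^\star$; the same holds for $\le$. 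This is a Haynsworth-type inertia argument, valid because $\mathcal{H}-\lambda^\star$ is positive on $\mathscr{X}_N^\perp$. Applying it at $\lambda^\star=\lambda_n$ shows that the multiplicity block of $\lambda_n$ occupies the same indices in both lists, so $\sigma_n^N(\lambda_n)=\lambda_n$. Its eigenvector can then be chosen as $\mathcal{P}_N\psi_n$.
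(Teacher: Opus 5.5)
Your proposal is correct, and for (iii) it takes a genuinely different route from the paper.

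Parts (i) and (ii) match the paper in substance: coercivity of $\mathcal{H}-\lambda^\star$ on $\mathscr{X}_N^\perp$ plus Lax--Milgram, then $\frac{d}{d\lambda^\star}\mathcal{F}_N=\mathcal{V}\mathcal{G}_N^2\mathcal{V}\ge 0$ plus min--max. The only variation is that you bound $\mathcal{V}$ on high frequencies directly by $N^{l-2}$, where the paper uses the shift $M$ from Lemma~\ref{Lem5.3}.

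The real difference is in the index matching of (iii). The paper never compares counting functions at a single $\lambda^\star$. Instead it uses (ii): $x\mapsto x-\sigma_m^N(x)$ is continuous and strictly increasing, so each $m$ has at most one fixed point $x=\sigma_m^N(x)$ in $(-\infty,\lambda_n)$, and one exists iff $\sigma_m^N(\lambda_n)<\lambda_n$. The Feshbach--Schur correspondence then matches these fixed points, with multiplicity, to the eigenvalues $\lambda_j<\lambda_n$ of $\mathcal{H}$.

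Your Schur-complement inertia argument works at the single value $\lambda^\star=\lambda_n$. Because $\mathcal{H}-\lambda^\star$ is coercive on $\mathscr{X}_N^\perp$, any subspace on which $\langle u,(\mathcal{H}-\lambda^\star)u\rangle$ is negative (resp.\ nonpositive) meets $\mathscr{X}_N^\perp$ trivially. Hence $\mathcal{P}_N$ maps it injectively onto a subspace where the reduced form is negative (resp.\ nonpositive), by your minimization identity. The lift $\phi^N\mapsto\phi^N-\mathcal{G}_N(\lambda^\star)\mathcal{V}\phi^N$ gives the reverse direction. The variational (Glazman) characterization then equates both counting functions.

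What each route buys:
\begin{itemize}
\item Yours does not use (ii) at all. It proves the stronger fact that the counting functions of $\mathcal{H}$ and of $\mathcal{P}_N(\mathcal{H}-\mathcal{F}_N(\lambda^\star))\mathcal{P}_N$ agree at every $\lambda^\star\le\pi^2N^2$. The cost is the min--max characterization for the unbounded operator $\mathcal{H}$ on $H^1$.
\item The paper's route stays finite-dimensional on the reduced side. It pays with the monotonicity from (ii) and with bookkeeping of fixed points over all $x<\lambda_n$.
\end{itemize}

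When you write it up, spell out the subspace-dimension argument rather than citing Haynsworth. Here the block operator is unbounded, so you are really using the form-sense, infinite-dimensional version of the inertia theorem.
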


The proof is deferred to Appendix~\ref{proofLem5.6}.

Next, we show that $\mathcal{F}_N(\lambda^\star)$ also enjoys the  boundedness properties in \eqref{eq:Lem5.2-01}. Consequently, we can apply Lemmas~\ref{Lem5.3} and \ref{Lem5.4} to the operator $\mathcal{H} - \mathcal{F}_N(\lambda^\star)$.
\begin{lemma}\label{Lem5.7}
    Let $V\in H^{s}$. There exists $N_0$ such that for all $N\geq N_0$, the following estimates hold for $\lambda^\star < \pi ^2N^2$ and $-1\leq t\leq s$
    \begin{equation}
        \|\mathcal{G}_N(\lambda^\star)f\|_{H^{t+2}}\lesssim \|\mathcal{P}^\perp_Nf\|_{H^{t}},\qquad
        \|\mathcal{F}_N(\lambda^\star)\phi\|_{H^{t}}\lesssim \|\phi\|_{H^{t+l}}.
    \end{equation}
\end{lemma}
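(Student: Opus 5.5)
The plan is to prove the bound for $\mathcal{G}_N(\lambda^\star)$ first, and then deduce the bound for $\mathcal{F}_N(\lambda^\star)=\mathcal{V}\mathcal{G}_N(\lambda^\star)\mathcal{V}$ by composing it with Lemma~\ref{Lem5.2}.

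\textbf{Reduction for $\mathcal{G}_N$.} Let $f\in H^t$ and set $u=\mathcal{G}_N(\lambda^\star)f\in\mathscr{X}_N^\perp$. By definition, $u$ solves
\[
\mathcal{P}_N^\perp(\mathcal{T}+\mathcal{V}-\lambda^\star)u=\mathcal{P}_N^\perp f .
\]
Since $\mathcal{T}$ commutes with $\mathcal{P}_N^\perp$, this can be rewritten as
\[
(\mathcal{T}-\lambda^\star)u=\mathcal{P}_N^\perp f-\mathcal{P}_N^\perp\mathcal{V}u .
\]
On $\mathscr{X}_N^\perp$ every frequency satisfies $|\bm k|>N$, so the symbol obeys $4\pi^2|\bm k|^2-\lambda^\star\ge 4\pi^2|\bm k|^2-\pi^2N^2\ge \tfrac34\cdot4\pi^2|\bm k|^2$. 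This assumes $\lambda^\star\le\pi^2N^2$, as in Lemma~\ref{Lem5.6}; for strongly negative $\lambda^\star$ the bound is even better. Hence $(\mathcal{T}-\lambda^\star)^{-1}$ maps $\mathscr{X}_N^\perp\cap H^{t}$ into $H^{t+2}$, uniformly in $N$ and $\lambda^\star$. This gives
\[
\|u\|_{H^{t+2}}\lesssim \|\mathcal{P}_N^\perp f\|_{H^t}+\|\mathcal{V}u\|_{H^t}\lesssim \|\mathcal{P}_N^\perp f\|_{H^t}+\|u\|_{H^{t+l}},
\]
where the last step uses Lemma~\ref{Lem5.2}.

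\textbf{Absorbing $\|u\|_{H^{t+l}}$.} Because $u\in\mathscr{X}_N^\perp$ and $l<2$, Lemma~\ref{Lem5.5}(i) (applied with $u=\mathcal{P}_N^\perp u$) gives
\[
\|u\|_{H^{t+l}}\le (2\pi N)^{l-2}\|u\|_{H^{t+2}} .
\]
For $N\ge N_0$ the constant $(2\pi N)^{l-2}$ times the implied constant is at most $\tfrac12$, so the term is absorbed into the left-hand side. This proves $\|\mathcal{G}_N(\lambda^\star)f\|_{H^{t+2}}\lesssim\|\mathcal{P}_N^\perp f\|_{H^t}$.

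\textbf{Main obstacle: justifying the a priori regularity.} The absorption step requires knowing beforehand that $u\in H^{t+2}$, so that the quantity being absorbed is finite. This is where I expect the real work. Two routes are available:
\begin{itemize}
\item[(a)] Bootstrap as in Lemma~\ref{Lem5.4}. Start from $u\in H^1\subset H^{-1+l}$, which holds by the uni-solvence in Lemma~\ref{Lem5.6}(i). Then iterate $u\in H^{t'+l}\Rightarrow u\in H^{t'+2}$ for $t'\le t$.
\item[(b)] Cleaner: use a Neumann series. Define $K=(\mathcal{T}-\lambda^\star)^{-1}\mathcal{P}_N^\perp\mathcal{V}$ acting on $\mathscr{X}_N^\perp\cap H^{t+2}$. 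Its norm is $\lesssim N^{l-2}<1$. Hence $I+K$ is invertible there, and by uniqueness its inverse applied to $(\mathcal{T}-\lambda^\star)^{-1}\mathcal{P}_N^\perp f$ coincides with $u$.
\end{itemize}
Either route also gives the $N_0$ uniform in $t\in[-1,s]$, because the constants of Lemma~\ref{Lem5.2} are uniform on this compact range.

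\textbf{Bound for $\mathcal{F}_N$.} For $\phi\in H^{t+l}$, Lemma~\ref{Lem5.2} and the first estimate give
\[
\|\mathcal{F}_N\phi\|_{H^t}=\|\mathcal{V}\,\mathcal{G}_N\mathcal{V}\phi\|_{H^t}\lesssim\|\mathcal{G}_N\mathcal{V}\phi\|_{H^{t+l}}\le\|\mathcal{G}_N\mathcal{V}\phi\|_{H^{t+2}}\lesssim\|\mathcal{V}\phi\|_{H^t}\lesssim\|\phi\|_{H^{t+l}} .
\]
Here $\mathcal{G}_N$ is applied to $\mathcal{V}\phi\in H^t$ with the same $t$, which is allowed since $-1\le t\le s$.
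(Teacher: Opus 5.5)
Your proposal is correct and follows essentially the same route as the paper: you bound $\|(\mathcal{T}-\lambda^\star)u\|_{H^t}$ from below on $\mathscr{X}_N^\perp$, control $\mathcal{V}u$ with Lemma~\ref{Lem5.2}, absorb $\|u\|_{H^{t+l}}\le(2\pi N)^{l-2}\|u\|_{H^{t+2}}$ via Lemma~\ref{Lem5.5}(i), and then compose with Lemma~\ref{Lem5.2} again to get the bound for $\mathcal{F}_N(\lambda^\star)$. Your extra step on a priori regularity (bootstrap, or Neumann series identified with $u$ by uniqueness in $H^1$) fills a point the paper leaves implicit, namely that $\|u\|_{H^{t+2}}$ must be finite before it can be absorbed.
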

\begin{proof}
 Let $g^\perp = \mathcal{G}(\lambda^\star)f$ solves
 $ \mathcal{P}_N^\perp\bigl(\mathcal{H}-\lambda^\star\bigr)g^\perp=\mathcal{P}_N^\perp f$. Taking the $H^t$ norm on both sides, and using Lemmas~\ref{Lem5.2} and \ref{Lem5.5} (i) yields, $\exists\, C_V>0$,
\begin{equation}\label{eq-Lem5.7-02}
\begin{aligned}
    \|\mathcal{P}_N^\perp f\|_{H^{t}} 
    &= \|\mathcal{P}^\perp_N(\mathcal{H}-\lambda^\star)g^\perp\|_{H^{t}}\\
    &\geq \|(\mathcal{T}-\lambda^\star)g^\perp\|_{H^{t}}-\|\mathcal{V} g^\perp\|_{H^{t}}\\
    &\geq \|(\mathcal{T}-\lambda^\star)g^\perp\|_{H^{t}}-C_V\| g^\perp\|_{H^{t+l}}\\
     &\geq
    \left(1-(\pi^2N^2+1)(2\pi N)^{-2}-C_V(2\pi N)^{l-2}\right)\|g^\perp\|_{H^{t+2}}.\\
\end{aligned}
\end{equation}
For sufficiently large $N$ such that $C_V(2\pi N)^{l-2}<1/2$, \eqref{eq-Lem5.7-02} implies
\begin{equation}
    \|\mathcal{G}(\lambda^\star)f\|_{H^{t+2}}=\|g^\perp\|_{H^{t+2}}\leq 5\|\mathcal{P}_N^\perp f\|_{H^{t}}.
\end{equation}
Then it follows from Lemma~\ref{Lem5.2} that 
\begin{equation}
    \|\mathcal{F}_N(\lambda^\star)\phi\|_{H^{t}}\lesssim \|\mathcal{G}_N(\lambda^\star)\mathcal{V}\phi\|_{H^{t+l}}
    \leq \|\mathcal{G}_N(\lambda^\star)\mathcal{V}\phi\|_{H^{t+2}} \lesssim \|\mathcal{V}\phi\|_{H^{t}} \lesssim \|\phi\|_{H^{t+l}}.
\end{equation}
\end{proof}

\subsection{Perturbation bound of the first \texorpdfstring{$n$}{n} eigenpairs}
\label{subsec:proofFM-3}
The perturbation theory for eigenvalues of self-adjoint operators (e.g., \cite{kato2013perturbation}) provides uniform estimates that valid for all eigenvalues. To obtain sharper bounds for the first $n$ eigenvalues, we present Proposition~\ref{Prop04}, which controls the eigenvalue error using the subspace spanned by the first $n$ eigenvectors.

To state the perturbation result, we take the following notations.
Let $\mathcal{S}$ be a self‑adjoint operator with eigenpairs $\{(\lambda_n, u_n)\}_{n \geq 1}$.  The space spanned by the first $n$ eigenvectors of $\mathcal{S}$ is denoted by
    $
    W_n(\mathcal{S}) := \operatorname{span}\{ u_1,u_2,\dots ,u_n\}.
    $
The orthogonal projection onto the eigenspace $\ker\big(\mathcal{S}-\lambda_n\big)$ is written as $\mathcal{P}_{n,\mathcal{S}}$.
The $n$-th spectral gap of $\mathcal{S}$ is
    $
    \gamma_{n,\mathcal{S}} := \min_{\lambda_j \neq \lambda_n} |\lambda_j-\lambda_n|.
    $

\begin{proposition}
    \label{Prop04}
    Let $\mathcal{S}_1, \mathcal{S}_2$ be two self-adjoint operators in $\mathscr{X}_N$, with respective eigenpairs denoted by $\{(\lambda_{1,n},u_{1,n})\}_{n\geq 1},\ \{(\lambda_{2,n},u_{2,n})\}_{n\geq1}$ listed in non-decreasing order. 

    Then the following estimate for eigenvalues holds,
    \begin{equation}\label{Eq-prop04-01}
        |\lambda_{1,n}-\lambda_{2,n}|\leq \sup_{u\in W_n(\mathcal{S}_1)\cup W_n(\mathcal{S}_2)} \left|\frac{\langle u,(\mathcal{S}_2-\mathcal{S}_1)u\rangle}{\langle u,u\rangle}\right|.
    \end{equation}
   Let $M > 0$ be a constant such that $\mathcal{S}_1 + M$ is positive definite, and define the $\mathcal{S}_1$ norm as 
    \[
    \|\psi\|_{\mathcal{S}_1}^2 := \langle \psi, (\mathcal{S}_1 + M)\psi \rangle.
    \]
    If $ |\lambda_{2,n} - \lambda_{1,n}| \leq \frac{\gamma_{n,\mathcal{S}_1}}{3} $,
    then the following estimate for the eigenvectors holds:
    \begin{equation}\label{Eq-prop04-02}
        \| u_{2,n}-\mathcal{P}_{n,\mathcal{S}_1}u_{2,n}\| _{\mathcal{S}_1}\leq 6(1+\frac{\lambda_{1,n}+M}{\gamma_{n,\mathcal{S}_1}})\sup_{v\in \mathscr{X}_N\setminus\{0\}}\left|\frac{\langle v,(\mathcal{S}_2-\mathcal{S}_1)u_{2,n}\rangle}{\|v\|_{\mathcal{S}_1}}\right|.
    \end{equation}
\end{proposition}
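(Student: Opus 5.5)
The eigenvalue estimate \eqref{Eq-prop04-01} follows from the Courant--Fischer min-max principle applied in both directions. Write $E:=\mathcal{S}_2-\mathcal{S}_1$ and let $\delta$ denote the supremum on the right-hand side of \eqref{Eq-prop04-01}.

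To bound $\lambda_{2,n}$ from above, use the $n$-dimensional trial space $W_n(\mathcal{S}_1)$:
\[
\lambda_{2,n}\le \max_{u\in W_n(\mathcal{S}_1)}\frac{\langle u,\mathcal{S}_2u\rangle}{\langle u,u\rangle}
\le \max_{u\in W_n(\mathcal{S}_1)}\frac{\langle u,\mathcal{S}_1u\rangle}{\langle u,u\rangle}+\delta=\lambda_{1,n}+\delta .
\]
Here we use that on $W_n(\mathcal{S}_1)$ the Rayleigh quotient of $\mathcal{S}_1$ is at most $\lambda_{1,n}$. Exchanging the roles of $\mathcal{S}_1$ and $\mathcal{S}_2$ and using $W_n(\mathcal{S}_2)$ as the trial space gives $\lambda_{1,n}\le\lambda_{2,n}+\delta$. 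The key point is that we only need $\langle u,Eu\rangle$ on the two low-lying subspaces, not a global operator norm. This is exactly why the supremum runs over the union $W_n(\mathcal{S}_1)\cup W_n(\mathcal{S}_2)$.

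For the eigenvector estimate \eqref{Eq-prop04-02}, set $w:=u_{2,n}-\mathcal{P}_{n,\mathcal{S}_1}u_{2,n}$, which lies in the $\mathcal{S}_1$-invariant subspace orthogonal to $\ker(\mathcal{S}_1-\lambda_{1,n})$. Starting from $\mathcal{S}_2u_{2,n}=\lambda_{2,n}u_{2,n}$, we get
\[
(\mathcal{S}_1-\lambda_{1,n})u_{2,n}=(\lambda_{2,n}-\lambda_{1,n})u_{2,n}-Eu_{2,n}.
\]
Since $(\mathcal{S}_1-\lambda_{1,n})$ annihilates $\mathcal{P}_{n,\mathcal{S}_1}u_{2,n}$ and commutes with the spectral projection, pairing with $v\in\mathscr{X}_N$ gives
\[
\langle v,(\mathcal{S}_1-\lambda_{1,n})w\rangle=(\lambda_{2,n}-\lambda_{1,n})\langle v,w\rangle-\langle v,Eu_{2,n}\rangle .
\]
Now expand $w$ in the eigenbasis of $\mathcal{S}_1$ (excluding eigenvalue $\lambda_{1,n}$) and choose the test function $v:=\sum_j \operatorname{sgn}(\lambda_{1,j}-\lambda_{1,n})\,\langle u_{1,j},w\rangle u_{1,j}$. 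This $v$ satisfies $\|v\|_{\mathcal{S}_1}=\|w\|_{\mathcal{S}_1}$ and $\langle v,w\rangle=\|w\|_{L^2}^2$. For this $v$,
\[
\langle v,(\mathcal{S}_1-\lambda_{1,n})w\rangle=\sum_j|\lambda_{1,j}-\lambda_{1,n}|\,|\langle u_{1,j},w\rangle|^2 .
\]
Each term is bounded below by $c\,|\langle u_{1,j},w\rangle|^2(\lambda_{1,j}+M)$, where we use $|\lambda_{1,j}-\lambda_{1,n}|\ge\gamma_{n,\mathcal{S}_1}$ together with
\[
\lambda_{1,j}+M\le |\lambda_{1,j}-\lambda_{1,n}|+\lambda_{1,n}+M\le |\lambda_{1,j}-\lambda_{1,n}|\Bigl(1+\frac{\lambda_{1,n}+M}{\gamma_{n,\mathcal{S}_1}}\Bigr).
\]
This yields
\[
\langle v,(\mathcal{S}_1-\lambda_{1,n})w\rangle\ge \Bigl(1+\frac{\lambda_{1,n}+M}{\gamma_{n,\mathcal{S}_1}}\Bigr)^{-1}\|w\|_{\mathcal{S}_1}^2 .
\]

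The remaining task, and the main obstacle, is absorbing the term $(\lambda_{2,n}-\lambda_{1,n})\|w\|_{L^2}^2$. The hypothesis $|\lambda_{2,n}-\lambda_{1,n}|\le\gamma_{n,\mathcal{S}_1}/3$ does this. Since
\[
\gamma_{n,\mathcal{S}_1}\|w\|_{L^2}^2\le \sum_j|\lambda_{1,j}-\lambda_{1,n}|\,|\langle u_{1,j},w\rangle|^2 ,
\]
the error term is at most one third of the left-hand side. It can therefore be moved to the left, losing a factor $2/3$. The right-hand side is then at most $\sup_v|\langle v,Eu_{2,n}\rangle|/\|v\|_{\mathcal{S}_1}\cdot\|w\|_{\mathcal{S}_1}$. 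Dividing by $\|w\|_{\mathcal{S}_1}$ gives \eqref{Eq-prop04-02} with constant $3/2\cdot(1+(\lambda_{1,n}+M)/\gamma_{n,\mathcal{S}_1})$, which is comfortably within the stated factor $6$. I would keep the slack in case the sign bookkeeping for eigenvalues on both sides of $\lambda_{1,n}$ needs a cruder splitting, for instance treating $j$ with $\lambda_{1,j}<\lambda_{1,n}$ separately using $\lambda_{1,j}+M\le\lambda_{1,n}+M$.
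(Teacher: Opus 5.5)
Your eigenvalue bound is the same two-sided min--max argument the paper uses. For the eigenvector bound you take a genuinely different route.

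The paper writes $\mathcal{P}_{n,\mathcal{S}_1}$ and $u_{2,n}$ as Riesz contour integrals over the circle $\Gamma$ of radius $\gamma_{n,\mathcal{S}_1}/2$ around $\lambda_{1,n}$. The resolvent identity then gives $u_{2,n}-\mathcal{P}_{n,\mathcal{S}_1}u_{2,n}=\frac{1}{2\pi i}\int_\Gamma (z-\lambda_{2,n})^{-1}(z-\mathcal{S}_1)^{-1}(\mathcal{S}_2-\mathcal{S}_1)u_{2,n}\,dz$. The paper bounds two factors separately:
\begin{itemize}
\item the contour factor by $3$, since the hypothesis keeps $\lambda_{2,n}$ at distance at least $\gamma_{n,\mathcal{S}_1}/6$ from $\Gamma$;
\item $(\mathcal{S}_1+M)(z-\mathcal{S}_1)^{-1}$ on the complement of the eigenspace by $2+2(\lambda_{1,n}+M)/\gamma_{n,\mathcal{S}_1}$.
\end{itemize}
That product is where the factor $6$ comes from.

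You instead use the eigen-equation on the complement of $\ker(\mathcal{S}_1-\lambda_{1,n})$ directly. You test it against $v=\operatorname{sgn}(\mathcal{S}_1-\lambda_{1,n})w$, which turns $\langle v,(\mathcal{S}_1-\lambda_{1,n})w\rangle$ into the positive quantity $\sum_j|\lambda_{1,j}-\lambda_{1,n}|\,|\langle u_{1,j},w\rangle|^2$. This is elementary, avoids complex analysis, and gives the sharper constant $\tfrac32\bigl(1+\frac{\lambda_{1,n}+M}{\gamma_{n,\mathcal{S}_1}}\bigr)$. The contour route is the standard Kato machinery, which carries over with little change to eigenvalue clusters and to settings where one prefers resolvent bounds to an explicit eigenbasis. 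The sign bookkeeping you were worried about is already handled by your choice of $v$, so no cruder splitting is needed.

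There are two small slips to fix, neither of which breaks the argument.
\begin{enumerate}
\item The claim $\langle v,w\rangle=\|w\|_{L^2}^2$ is false. With the signs, $\langle v,w\rangle=\sum_j\operatorname{sgn}(\lambda_{1,j}-\lambda_{1,n})\,|\langle u_{1,j},w\rangle|^2$. What you actually use, and what is true, is $|\langle v,w\rangle|\le\|w\|_{L^2}^2\le\gamma_{n,\mathcal{S}_1}^{-1}\sum_j|\lambda_{1,j}-\lambda_{1,n}|\,|\langle u_{1,j},w\rangle|^2$, so the absorption step goes through unchanged.
\item The identity with $\langle v,w\rangle$ in place of $\langle v,u_{2,n}\rangle$ holds only for $v\perp\ker(\mathcal{S}_1-\lambda_{1,n})$, not for all $v\in\mathscr{X}_N$. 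Your chosen $v$ satisfies this, so nothing is lost.
\end{enumerate}
Also dispose of the trivial case $w=0$ before dividing by $\|w\|_{\mathcal{S}_1}$.
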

The proof of this proposition follows from standard techniques in the perturbation theory of eigenvalues and is deferred to Appendix~\ref{proofProp04}.

\subsection{Proof of \texorpdfstring{Lemma~\ref{Lem1.1}}{lemma1.1} and \texorpdfstring{Theorem~\ref{Thm01}}{theorem1.2}}

\begin{proof}[Proof of Lemma~\ref{Lem1.1}]

Fix $n \in \mathbb{N}^+$. Using the same notation in Proposition~\ref{Prop04}. Let $\mathcal{S}_1 = \mathcal{P}_N \bigl[\mathcal{H} - \mathcal{F}_N(\lambda_n)\bigr] \mathcal{P}_N$ and $\mathcal{S}_2 = \mathcal{P}_N \mathcal{H} \mathcal{P}_N$, then
\begin{equation} \label{Eq-Lem1.1-01}
    \begin{aligned}
    \lambda_{1,n} &= \sigma_n^N(\lambda_n), \qquad &\lambda_{2,n} &= \lambda_n^N, \\
    u_{1,n} &= \psi_n^N(\lambda_n), \qquad &u_{2,n} &= \psi_n^N.
    \end{aligned}
\end{equation}
By Lemma~\ref{Lem5.6}, for $N \geq N_0$ and $\pi^2 N^2 \geq \lambda_n$, we have
\begin{equation}\label{Eq-Lem1.1-02}
    \sigma_n^N(\lambda_n) = \lambda_n, \qquad \psi_n^N(\lambda_n) \in \mathcal{P}_N \operatorname{ker}(\lambda_n - \mathcal{H}).
\end{equation}
By Lemma~\ref{Lem5.3}, there exists $M > 0$ such that 
\begin{equation}\label{Eq-Lem1.1-03}
    \frac{1}{2} \|\phi\|_{H^1}^2 \leq \langle \phi, (\mathcal{S}_i + M) \phi \rangle \lesssim \|\phi\|_{H^1}^2, \qquad \forall\, \phi \in \mathscr{X}_N, \quad i = 1, 2.
\end{equation}
This implies that $\mathcal{S}_1 + M $ is positive definite, and establishes the equivalence between the $\mathcal{S}_1$ norm and the $H^1$ norm. Substituting \eqref{Eq-Lem1.1-01} and \eqref{Eq-Lem1.1-02} into Proposition~\ref{Prop04}, we obtain
\begin{equation}\label{Eq-Lem1.1-04}
    |\lambda_n^N - \lambda_n| \leq 
    \sup_{u \in W_n(\mathcal{S}_1) \cup W_n(\mathcal{S}_2)}
    \left|
    \frac{\langle u, \mathcal{F}(\lambda_n) u \rangle}{\langle u, u \rangle}
    \right|,
\end{equation}
and if $ |\lambda_n - \lambda_n^N| \leq \frac{1}{3} \gamma_{n, \mathcal{S}_1} $, then
\begin{equation}\label{Eq-Lem1.1-05}
    \|\psi_n^N - \mathcal{P}_{n,\mathcal{S}_1}\psi_n^N\|_{\mathcal{S}_1}
    \leq 6\left(1 + \frac{\lambda_n + M}{\gamma_{n, \mathcal{S}_1}}\right)
    \sup_{v \in \mathscr{X}_N \setminus \{0\}}
    \left|
    \frac{\langle v, \mathcal{F}_N(\lambda_n) \psi_n^N \rangle}{\|v\|_{\mathcal{S}_1}}
    \right|.
\end{equation}
To adapt \eqref{Eq-Lem1.1-05}, we verify that $|\lambda_n-\lambda_n^N|\leq \tfrac{1}{3}\gamma_{n,\mathcal{S}_1}$ for sufficiently large $N$. 

We first show that $\gamma_{n, \mathcal{S}_1}$ has a uniform lower bound $\gamma_{n, \mathcal{S}_1} \geq \gamma_{n, \mathcal{H}}$. In fact, by Lemma~\ref{Lem5.6},
for $\lambda_j > \lambda_n$, we have
$
\sigma_j^N(\lambda_n) - \lambda_n \geq \sigma_j^N(\lambda_j) - \lambda_n = \lambda_j - \lambda_n > 0.
$
For $\lambda_j < \lambda_n$, we have
$
\sigma_j^N(\lambda_n) - \lambda_n \leq \sigma_j^N(\lambda_j) - \lambda_n = \lambda_j - \lambda_n < 0.
$
In both cases, we find that
\[
|\sigma_j^N(\lambda_n) - \lambda_n| \geq |\lambda_j - \lambda_n|, \quad \forall\, j
\]
which implies $\gamma_{n, \mathcal{S}_1}\geq \gamma_{n,\mathcal{H}}$ since $\lambda_n=\sigma^N_n(\lambda_n)$. 

Then we establish the convergence of eigenvalue. The numerator in \eqref{Eq-Lem1.1-04} is bounded by
\begin{equation}\label{Eq-Lem1.1-06}
\begin{aligned}    
\bigl|\langle u, \mathcal{F}_N(\lambda_n) u \rangle\bigr|
&= \bigl|\langle \mathcal{V} u, \mathcal{G}_N(\lambda_n) \mathcal{V} u \rangle \bigr| \\
&\leq \|\mathcal{P}_N^\perp \mathcal{V} u\|_{H^{-1}} \|\mathcal{G}_N(\lambda_n) \mathcal{V} u\|_{H^{1}} \\
&\lesssim \|\mathcal{P}_N^\perp \mathcal{V} u\|_{H^{-1}}^2,
\end{aligned}
\end{equation}
where the last inequality follows from Lemma~\ref{Lem5.7} with $t = -1$.
Applying the estimate from Lemmas~\ref{Lem5.5} and \ref{Lem5.2}, we obtain
\begin{equation}\label{eq-thm3-07}
\|\mathcal{P}_N^\perp \mathcal{V} u\|_{H^{-1}} \lesssim N^{-(2-l)} \|\mathcal{V} u\|_{H^{1-l}} \lesssim N^{-(2-l)} \|u\|_{H^1}.
\end{equation}
For $u \in W_n(\mathcal{S}_i)$, with $i = 1,2$, \eqref{Eq-Lem1.1-03} implies
\begin{equation}\label{eq-thm3-08}
\|u\|_{H^1}^2 \leq 2\langle u, (\mathcal{S}_i + M) u \rangle \leq 2 (\lambda_{i,n} + M) \|u\|_{L^2}^2.
\end{equation}
Combining \eqref{Eq-Lem1.1-04}, \eqref{Eq-Lem1.1-06} and \eqref{eq-thm3-07} together, we obtain
\begin{equation}\label{eq-thm3-09}
|\lambda_n^N - \lambda_n| \lesssim N^{-2(2-l)} (\max\{\lambda_n, \lambda_n^N\} + M) \lesssim N^{-2(2-l)}.
\end{equation}
The last inequality follows from the uniform boundedness of $\lambda_n^N$ with respect to $N$, as suggested by the first inequality. Since $l < 2$, \eqref{eq-thm3-09} implies the convergence of the eigenvalue.

Provided $\gamma_{n,\mathcal{S}_1}\geq \gamma_{n,\mathcal{H}}$ and \eqref{eq-thm3-09}, there exists $N_0$ such that for all $N\geq N_0$, the condition of \eqref{Eq-Lem1.1-05}$|\lambda_n - \lambda_n^N| \leq \gamma_{n,\mathcal{H}}/3$. Combining \eqref{Eq-Lem1.1-05} and \eqref{Eq-Lem1.1-03} yields
\begin{equation}\label{eq-thm3-10}
\|\psi_n^N - \mathcal{P}_{n,\mathcal{S}_1}\psi_n^N\|_{H^1}
\lesssim 
\sup_{v\in \mathscr{X}_N\setminus\{0\}}
\left|
\frac{\langle v,\mathcal{F}_N(\lambda_n)\psi_n^N\rangle}{\|v\|_{H^1}}
\right|.
\end{equation}
Following the same line of \eqref{Eq-Lem1.1-06}, the bilinear form is estimated by
\begin{equation}\label{eq-thm3-11}
\left|\langle v,\mathcal{F}_N(\lambda_n)\psi_n^N\rangle\right|
\lesssim \|\mathcal{P}_N^\perp \mathcal{V}v\|_{H^{-1}}
\|\mathcal{P}_N^\perp \mathcal{V}\psi_n^N\|_{H^{-1}}.
\end{equation}
For the first factor, using Lemmas~\ref{Lem5.5} and \ref{Lem5.2} to obtain
\begin{equation}\label{eq-thm3-12}
\|\mathcal{P}_N^\perp \mathcal{V}v\|_{H^{-1}}
\lesssim 
N^{-(2-l)} \|\mathcal{V}v\|_{H^{1-l}}
\lesssim 
N^{-(2-l)} \|v\|_{H^1}.
\end{equation}
For the second factor, using Lemmas~\ref{Lem5.5},\ref{Lem5.2} and \ref{Lem5.4}, we obtain
\begin{equation}\label{eq-thm3-13}
\|\mathcal{P}_N^\perp \mathcal{V}\psi_n^N\|_{H^{-1}}
\lesssim 
N^{-(s+1)} \|\mathcal{V}\psi_n^N\|_{H^s}
\lesssim 
N^{-(s+1)} \|\psi_n^N\|_{H^{s+l}}
\lesssim 
N^{-(s+1)} \|\psi_n^N\|_{L^2}.
\end{equation}
Substituting \eqref{eq-thm3-11}--\eqref{eq-thm3-13} into \eqref{eq-thm3-10} yields the desired estimate (note that $b=2-l$)
\[
\|\psi_n^N - \mathcal{P}_{n,\mathcal{S}_1}\psi_n^N\|_{H^1}
\lesssim 
N^{-(s+1+b)} \|\psi_n^N\|_{L^2}.
\]
Since $(\lambda_n,\mathcal P_{n,\mathcal{S}_1} \psi_n^N)$ is an eigenpair of $\mathcal{S}_1$, \eqref{Eq-Lem1.1-02} implies that there exists $\psi \in \operatorname{ker}(\lambda_n-\mathcal{H})$ such that $\mathcal P_{n,\mathcal{S}_1 } \psi_n^N =\mathcal{P}_N \psi$. This completes the proof of Lemma~\ref{Lem1.1}.
\end{proof}

\begin{proof}[Proof of Theorem~\ref{Thm01}]
    Applying the triangle inequality
    \begin{equation}\label{Eq-thm1-01}
        \|\psi_n^N-\psi\|_{H^1} \lesssim \|\psi^N_n-\mathcal{P}_N\psi\|_{H^1} + \|\psi - \mathcal{P}_N \psi\|_{H^1}.
    \end{equation}
    According to Lemma~\ref{Lem1.1}, There exist $\psi \in \operatorname{ker}(\lambda_n-\mathcal{H})$ such that 
    \begin{equation}\label{Eq-thm1-02}
         \|\psi^N_n-\mathcal{P}_N\psi\|_{H^1}\lesssim N^{-(s+1+b)} \|\psi_n^N\|_{L^2}.
    \end{equation}
    By Lemmas~\ref{Lem5.4} and \ref{Lem5.5},
    \begin{equation}\label{Eq-thm1-03}
        \|\psi-\mathcal{P}_N\psi\|_{H^1} = \|\mathcal{P}_N^\perp\psi\|_{H^1} \lesssim N^{-(s+1)}\|\psi\|_{H^{s+2}}\lesssim N^{-(s+1)}\|\psi\|_{L^2}.
    \end{equation}    
    Inserting \eqref{Eq-thm1-02} and \eqref{Eq-thm1-03} into \eqref{Eq-thm1-01} yields
    \begin{equation}\label{Eq-thm1-04}
        \|\psi_n^N-\psi\|_{H^1} \lesssim N^{-(s+1)}\bigl(\|\psi\|_{L^2}+\|\psi_n^N\|_{L^2}\bigr)\lesssim N^{-(s+1)}\|\psi\|_{L^2}.
    \end{equation}
    This proves the estimate for eigenvector. 
    
    The eigenvalue is bounded using the Rayleigh quotient $\lambda_n^N = \langle \psi_n^N,\mathcal{H}\psi_n^N\rangle/\|\psi_n^N\|_{L^2}^2$:
    \begin{equation}
    \begin{aligned}
        |\lambda_n^N-\lambda_n| \|\psi_n^N\|^2_{L^2}
        &= 
        \left|\langle \psi_n^N,(\mathcal{H}-\lambda_n)\psi_n^N\rangle\right|\\
        &=
        \left| \langle \psi_n^N-\psi,(\mathcal{H}-\lambda_n)(\psi_n^N-\psi)\rangle  \right|\\
        &\lesssim 
        \|\psi_n^N-\psi\|_{H^1}^2 + (\lambda_n+M)\|\psi_n^N-\psi\|_{L^2}^2\\
        &\lesssim N^{-(2s+2)}\|\psi\|_{L^2}^2\\
        &\lesssim N^{-(2s+2)}\|\psi_n^N\|_{L^2}^2,
    \end{aligned}
    \end{equation}
    where the first inequality is obtained using Lemma~\ref{Lem5.3}, and last two inequalities are obtained using \eqref{Eq-thm1-04}. Dividing both side $\|\psi_n^N\|_{L^2}^2$ yields the desired estimate for eigenvalue.
\end{proof}

\section{Analysis of AR and AR-FSM}
\label{sec:proofAR}

This section is devoted to the proofs of Lemma~\ref{Lem2.1} and Theorem~\ref{Thm03}. 
The convergence result for AR-FSM stated in Theorem~\ref{Thm02} was established in Section~\ref{sec:postproc} using these results.
We first analyze the basic properties of the space $PSH^{t,r}$ in Section~\ref{subsec:AR1}, then study the existence and regularity of the asymptotic functions in Section~\ref{subsec:AR2}. Finally, Sections~\ref{subsec:AR3} and \ref{subsec:AR4} are devoted to the proofs of Lemma~\ref{Lem2.1} and Theorem~\ref{Thm03}, respectively.

\subsection{Properties of function space $PSH^{t,r}$} 
\label{subsec:AR1} 
Our previous definition Definition~\ref{Def01} for $PSH^{t,r}$ is convenient for verifying whether a function belongs to $PSH^{t,r}$, but is less suitable for theoretical analysis since the choice of $V_{\mathrm{lc}}$ is not unique. We therefore first establish Lemma~\ref{Lem6.1}, which provides some equivalent characterizations of $PSH^{t,r}$ better suited for analysis.
To state this lemma, let $h_c:\mathbb{R}^d\to[0,1]$ be a $C^\infty$ cutoff function such that
\begin{equation}\label{eq-sec6.1-01}
\sum_{\bm n\in\mathbb Z^d} h_c(\bm x+\bm n)=1,
\quad
\forall\,\bm x\in\mathbb R^d,
\qquad
h_c(\bm x)=0,
\quad
\forall\,\bm x\in\mathbb{R}\setminus\Omega_{\mathrm{lc}},
\end{equation}
where $\Omega_{\mathrm{lc}}:= [-3/4,3/4]^d$ is chosen so as to exclude all periodic images of the singular point $\bm x=\bm 0$.
One can choose $h_c$ as the tensor product of one-dimensional cutoff functions. For each multi-index $\bm \alpha\in\mathbb N^d$, define the periodic analogue of $\bm x^{\bm \alpha}$ by
\begin{equation}\label{Eq-6.1-02}
    m_{\bm \alpha}(\bm x)
    :=
    \prod_{j=1}^{d}
    \bigl(e^{2\pi i x_j}-1\bigr)^{\alpha_j}.
\end{equation}
The function $m_{\bm \alpha}$ is periodic in $[0,1]^d$ and $C^\infty$ smooth, and satisfies
\[
m_{\bm \alpha}/\bm x^{\bm \alpha} \in C^{\infty}(\mathbb{R}^d),
\qquad
\bm x^{\bm \alpha}/m_{\bm \alpha} \in C^{\infty}(\Omega_{\mathrm{lc}}).
\]

\begin{lemma}\label{Lem6.1}
Let $t\in\mathbb R$ and $r\in\mathbb N$. The following are equivalent: $(\mathrm{i})$ $ f\in PSH^{t,r}$; $(\mathrm{ii})$  $\bm x^{\bm \alpha} h f \in H^{t+|\bm \alpha|}(\mathbb R^d)$, $\forall\, |\bm \alpha|\le r$; $(\mathrm{iii})$ $m_{\bm \alpha} f \in H^{t+|\bm \alpha|}$, $\forall\, |\bm \alpha|\le r$.
\end{lemma}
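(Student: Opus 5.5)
The plan is to prove the cycle (i)$\Rightarrow$(ii)$\Rightarrow$(iii)$\Rightarrow$(ii)$\Rightarrow$(i), where $h=h_c$ is the cutoff in \eqref{eq-sec6.1-01}. Two standard tools will be used throughout. The first is that multiplication by a $C_c^\infty$ function (or by a smooth periodic function) is bounded on $H^{\sigma}(\mathbb R^d)$ (resp. $H^\sigma$) for every $\sigma\in\mathbb R$; this follows from Lemma~\ref{Lem5.1} with a smooth factor, or directly by duality and interpolation. The second is a localization principle: for $g$ periodic and $\chi\in C_c^\infty$ we have $\|\chi g\|_{H^\sigma(\mathbb R^d)}\lesssim\|g\|_{H^\sigma}$. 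Conversely, for $u\in H^\sigma(\mathbb R^d)$ with compact support, its periodization $\sum_{\bm n}u(\cdot+\bm n)$ satisfies $\|\sum_{\bm n}u(\cdot+\bm n)\|_{H^\sigma}\lesssim\|u\|_{H^\sigma(\mathbb R^d)}$. I would prove the latter via Poisson summation, whose periodic coefficients are $\hat u(\bm k)$. Writing $u=\tilde\chi u$ with $\tilde\chi\in C_c^\infty$, we get $\hat u=\hat{\tilde\chi}*\hat u$, and the rapid decay of $\hat{\tilde\chi}$ together with Peetre's inequality bounds the weighted $\ell^2$ norm of the samples by the weighted $L^2$ norm.

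For (ii)$\Rightarrow$(i), choose $V_{\mathrm{lc}}:=h_cf$. It is compactly supported, its periodization equals $f\sum_{\bm n}h_c(\cdot+\bm n)=f$, and \eqref{Eq-defPSH-02} is exactly (ii). For (i)$\Rightarrow$(ii), I first establish an auxiliary fact: if $\bm x^{\bm\alpha}V_{\mathrm{lc}}\in H^{t+|\bm\alpha|}(\mathbb R^d)$ for all $|\bm\alpha|\le r$, then $\chi V_{\mathrm{lc}}\in H^{t+r}(\mathbb R^d)$ for every $\chi\in C_c^\infty$ vanishing near $\bm 0$. To see this, cover $\operatorname{supp}\chi$ by finitely many open sets on which some coordinate $x_j$ stays away from $0$, and take a subordinate partition $\chi=\sum\chi_j$. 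Then $\chi_jV_{\mathrm{lc}}=(\chi_j x_j^{-r})\,x_j^rV_{\mathrm{lc}}$, where $\chi_jx_j^{-r}\in C_c^\infty$ and $x_j^rV_{\mathrm{lc}}\in H^{t+r}$. Now expand $h_cf=h_cV_{\mathrm{lc}}+\sum_{\bm n\ne 0}h_cV_{\mathrm{lc}}(\cdot+\bm n)$; the sum is finite by compact support. The first term satisfies $\bm x^{\bm\alpha}h_cV_{\mathrm{lc}}=h_c\cdot(\bm x^{\bm\alpha}V_{\mathrm{lc}})\in H^{t+|\bm\alpha|}$. In each term with $\bm n\neq\bm 0$, $h_c(\cdot)$ vanishes near $-\bm n$ because $\Omega_{\mathrm{lc}}$ contains no nonzero lattice point. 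The auxiliary fact then gives $h_cV_{\mathrm{lc}}(\cdot+\bm n)\in H^{t+r}$, and multiplying by the smooth $\bm x^{\bm\alpha}$ (cut off on the compact support) keeps it in $H^{t+r}\subset H^{t+|\bm\alpha|}$.

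For (ii)$\Leftrightarrow$(iii), the periodic function $m_{\bm\alpha}f$ is the periodization of $h_c m_{\bm\alpha}f$, and $h_cm_{\bm\alpha}f=(m_{\bm\alpha}/\bm x^{\bm\alpha})\,\tilde\chi\,(\bm x^{\bm\alpha}h_cf)$ with $\tilde\chi\in C_c^\infty$ equal to $1$ on $\Omega_{\mathrm{lc}}$. Since $m_{\bm\alpha}/\bm x^{\bm\alpha}\in C^\infty$, (ii) and the periodization bound yield $m_{\bm\alpha}f\in H^{t+|\bm\alpha|}$. Conversely, $\bm x^{\bm\alpha}h_cf=\bigl(h_c\,\bm x^{\bm\alpha}/m_{\bm\alpha}\bigr)\,(m_{\bm\alpha}f)$. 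The prefactor $h_c\bm x^{\bm\alpha}/m_{\bm\alpha}$ is in $C_c^\infty$ because $\bm x^{\bm\alpha}/m_{\bm\alpha}$ is smooth on $\Omega_{\mathrm{lc}}\supset\operatorname{supp}h_c$, so the localization bound gives (ii).

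I expect the main obstacle to be the localization/periodization norm equivalence for possibly negative Sobolev indices $t$. This is where I would be careful with the Peetre-inequality convolution argument, rather than arguing pointwise. A second delicate point is the auxiliary "regular away from the origin" step when $t<0$. I would handle it entirely by smooth multipliers as described, which avoids any pointwise interpretation of $V_{\mathrm{lc}}$.
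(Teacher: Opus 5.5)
Your proof is correct. Its (iii)$\Rightarrow$(ii) and (ii)$\Rightarrow$(i) steps coincide with the paper's; the difference is how (i) is brought into the cycle.

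The paper proves (i)$\Rightarrow$(iii) directly. Since $m_{\bm\alpha}$ is periodic, $m_{\bm\alpha}f=\sum_{\bm n}(m_{\bm\alpha}f_{\mathrm{lc}})(\cdot+\bm n)$ is just the periodization of $m_{\bm\alpha}f_{\mathrm{lc}}=(m_{\bm\alpha}/\bm x^{\bm\alpha})\,\bm x^{\bm\alpha}f_{\mathrm{lc}}$. Because $m_{\bm\alpha}/\bm x^{\bm\alpha}$ is smooth on all of $\mathbb R^d$, the periodic images of the singularity never have to be examined.

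You instead go (i)$\Rightarrow$(ii). Since $\bm x^{\bm\alpha}$ vanishes only at the origin, you must show separately that the image terms $h_cV_{\mathrm{lc}}(\cdot+\bm n)$, $\bm n\neq\bm 0$, are harmless. That is what forces your auxiliary fact that $V_{\mathrm{lc}}$ is $H^{t+r}$ away from $\bm 0$. The detour is valid but avoidable: your own (ii)$\Rightarrow$(iii) argument works verbatim with an arbitrary $V_{\mathrm{lc}}$ in place of $h_cf$, and then gives (i)$\Rightarrow$(iii) immediately.

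Your route does buy two things:
\begin{enumerate}
\item The auxiliary fact is essentially the mechanism of the paper's later Lemma~\ref{Lem6.4}. There it is done with the global multiplier $(1-h_c')x_j^r/\sum_j x_j^{2r}$ rather than a partition of unity.
\item You prove the periodization and localization bounds explicitly, for arbitrary and possibly negative Sobolev orders, via Poisson summation and Peetre's inequality. The paper only asserts these (``only finitely many $\bm n$ contribute'', and its use of $H^{t+|\bm\alpha|}(\Omega_{\mathrm{lc}})$ norms of possibly negative order). On that point your version is the more complete one.
\end{enumerate}
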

\begin{proof}
We prove the equivalence by showing
$\mathrm{(i)} \Rightarrow \mathrm{(iii)} \Rightarrow \mathrm{(ii)} \Rightarrow \mathrm{(i)}$.

$\mathrm{(i)} \Rightarrow \mathrm{(iii)}$. Let $f \in PSH^{t,r}$. By Definition~\ref{Def01}, there exists a compactly supported $f_{\mathrm{lc}}$ such that 
    $ f = \sum_{\bm n\in \mathbb{Z}^d}f_{\mathrm{lc}}(\cdot+\bm n) $
    with 
    $\bm x^{\bm \alpha}f_{\mathrm{lc}}\in H^{t+|\bm \alpha|}(\mathbb{R}^d)$, 
    for every $|\bm \alpha|\leq r$. Owing to the compact support of $f_{\mathrm{lc}}$, only finitely many $\bm n \in \mathbb{Z}^d$ contribute to
    $f(\bm x)=\sum_{\bm n} f_{\mathrm{lc}}(\bm x+\bm n)$ for $\bm x \in [0,1]^d$. This allows us to bound
    $\| m_{\bm \alpha}f\|_{H^{t+|\bm \alpha|}}$ by $\| m_{\bm \alpha}f_{\mathrm{lc}}\|_{H^{t+|\bm \alpha|}(\mathbb{R}^d)}$. Therefore, for every $|\bm \alpha|\leq r$
    \[
    \|m_{\bm \alpha}f\|_{H^{t+|\bm \alpha|}} 
    \lesssim \| m_{\bm \alpha}f_{\mathrm{lc}}\|_{H^{t+|\bm \alpha|}(\mathbb{R}^d)} 
    \lesssim \| \bm x^{\bm \alpha}f_{\mathrm{lc}}\|_{H^{t+|\bm \alpha|}(\mathbb{R}^d)}<+\infty,
    \]
    where the second follows from $m_{\bm \alpha}/\bm x^{\bm \alpha}\in C^{\infty}(\mathbb{R}^d)$.

$\mathrm{(iii)} \Rightarrow \mathrm{(ii)}$. 
   Since $h_c$ is compactly supported in $\Omega_{\mathrm{lc}}$ and
    $\bm x^{\bm \alpha}h_c / m_{\bm \alpha} \in C^{\infty}(\Omega_{\mathrm{lc}})$, we conclude $\mathrm{(ii)}$ by
    \[
    \|\bm x^{\bm \alpha} h_cf\|_{H^{t+|\bm \alpha|}(\mathbb{R}^d)}
    =
    \|\bm x^{\bm \alpha} h_cf\|_{H^{t+|\bm \alpha|}(\Omega_{\mathrm{lc}})}
    \lesssim
    \|m_{\bm \alpha} f\|_{H^{t+|\bm \alpha|}(\Omega_{\mathrm{lc}})}
    \lesssim
    \|m_{\bm \alpha} f\|_{H^{t+|\bm \alpha|}}.
    \]
    
$\mathrm{(ii)} \Rightarrow \mathrm{(i)}$.
    Let $f_{\mathrm{lc}}(\bm x):= h_c(\bm x)f(\bm x)$ for all $\bm x\in \mathbb{R}^d$. Then $\mathrm{(ii)}$ is exactly $\bm x^{\bm \alpha}f_{\mathrm{lc}}\in H^{t+|\bm \alpha|}(\mathbb{R}^d),\ \forall\, |\bm \alpha|\leq r$. Therefore we conclude $f\in PSH^{t,r}$ by
    \[
    \sum_{\bm n\in \mathbb{Z}^d} f_{\mathrm{lc}}(\bm x+\bm n)
    =
    \sum_{\bm n\in \mathbb{Z}^d} h_c(\bm x+\bm n) f(\bm x)
    =
    f(\bm x).
    \]
\end{proof}

The function space $PSH^{t,r}$ is generated by two operators $\mathcal{D}$ and $\mathcal{M}$, where $\mathcal{D}$ denotes the first-order derivative operator and $\mathcal{M}$ denotes the multiplication operator by $\bm x$ (or $m_{\bm \alpha}$ in the periodic setting). 
In accordance with Lemma~\ref{Lem6.1} (ii), we define the $PSH^{t,r}$ norm by
\begin{equation}\label{eq-sec6.1-03.1}
    \|f\|_{PSH^{t,r}}:= \sum_{|\bm \alpha|\leq r}\bnm{\bm x^{\bm \alpha }h_cf}_{H^{t+|\bm \alpha|}(\mathbb{R}^d)} \asymp \sum_{|\bm \beta|\leq |\bm \alpha|\leq r}\bnm{\mathcal{D}^{\bm \beta}\mathcal{M}^{\bm \alpha}[h_c f]}_{H^{t}(\mathbb{R}^d)} .
\end{equation}
This definition directly implies the embedding
\begin{equation}\label{eq-sec6.1-03}
    PSH^{t,r+1}\hookrightarrow PSH^{t,r}\hookrightarrow \cdots\hookrightarrow PSH^{t,0}=H^{t},
\end{equation}
and the induced commutative diagram structure 
\begin{equation}\label{eq-sec6.1-diag}
\begin{tikzcd}
PSH^{t,r}
  \arrow[r,"\mathcal{M}"]
  \arrow[d,"\mathcal{D}"]
  \arrow[rd,dotted,"\mathrm{Id}"]
&
PSH^{t+1,r-1}
  \arrow[d,"\mathcal{D}"']
&
PSH^{t,0} = H^{t}
    \arrow[d,"\mathcal{D}"]
\\
PSH^{t-1,r}
  \arrow[r,"\mathcal{M}"']
&
PSH^{t,r-1},
&
PSH^{t-1,0} = H^{t-1}.
\end{tikzcd}
\end{equation}
As a consequence, applying this diagram iteratively, we obtain
\begin{equation}\label{eq-sec6.1-04}
    \bnm{\bm x^{\bm \alpha}\mathcal{D}^{\bm \beta}[h_cf]}_{H^{t}(\mathbb{R}^d)}\lesssim \|f\|_{PSH^{t+|\bm \beta|-|\bm \alpha|,|\bm \alpha|}}.
\end{equation}

Next, we study the boundedness property of  $V \in PSH^{s,r}$ as a multiplication operator. Lemma~\ref{Lem6.2} shows that, for $t\leq s$, multiplication by $V$ defines a bounded map from $H^{t+r+l}$ to $H^{t+r}$, extending the admissible index range by $r$ compared with Lemma~\ref{Lem5.2}.

\begin{lemma}\label{Lem6.2}
    Let $V \in PSH^{s,r}$ with $r\in\mathbb{N}$. If $\phi \in H^{t+r+l}$ satisfies $ \mathcal{D}^{\bm \alpha}_{\bm 0}[\phi]=0$ for every $\bm |\bm \alpha| <r$, then for $-1\leq t+r\leq s+r$, we have
        \begin{equation}\label{eq-Lem6.2-01}
        \|V\phi\|_{H^{t+r}}\lesssim \|V\|_{PSH^{s,r}}\| \phi \|_{H^{t+r+l}}.
        \end{equation}
\end{lemma}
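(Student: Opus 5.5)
The plan is to localize, use the vanishing derivatives at $\bm 0$ to factor $\phi$ through monomials of degree $r$, and then trade each monomial against the weight in the $PSH^{s,r}$ norm, finishing with the product estimate of Lemma~\ref{Lem5.1} under the index conditions \eqref{Eq-5.0-01}.

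\textbf{Step 1: split $V$.} Using the partition of unity $h_c$ from \eqref{eq-sec6.1-01}, write
\[
V=\sum_{\bm n\in\mathbb Z^d}(h_cV)(\cdot+\bm n).
\]
On the support of $h_c$ we can distinguish two regions:
\begin{itemize}
\item Near $\bm 0$, insert a smooth cutoff $\chi$ with $\chi\equiv1$ near $\bm 0$ and supported in a small ball.
\item Away from $\bm 0$, the factor $(1-\chi)$ combined with $m_{\bm\alpha}$ shows that $(1-\chi)h_cV\in H^{s+r}$ locally. Indeed, $m_{\bm\alpha}$ with $|\bm\alpha|=r$ is nonvanishing there, and Lemma~\ref{Lem6.1}(iii) gives $m_{\bm\alpha}V\in H^{s+r}$.
\end{itemize}
The piece away from $\bm 0$ is then handled directly by Lemma~\ref{Lem5.1} with $s$ replaced by $s+r$. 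Since $t+r\le s+r$, the index conditions \eqref{Eq-5.0-01} still hold with $t\to t+r$ and $s\to s+r$, and $l$ is unchanged or only improved.

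\textbf{Step 2: Taylor factorization of $\phi$ near $\bm 0$.} For the localized piece $\chi h_cV\phi$, use the vanishing derivatives $\mathcal D^{\bm\alpha}_{\bm 0}\phi=0$ for $|\bm\alpha|<r$. Taylor's formula with integral remainder gives
\[
\chi\phi=\sum_{|\bm\alpha|=r}\bm x^{\bm\alpha}\phi_{\bm\alpha},
\qquad
\phi_{\bm\alpha}(\bm x)=c_{\bm\alpha}\int_0^1(1-\tau)^{r-1}(\chi\partial^{\bm\alpha}\phi)(\tau\bm x)\,d\tau
\]
(modulo smooth cutoff corrections). We then need
\[
\|\phi_{\bm\alpha}\|_{H^{t+l}(\mathbb R^d)}\lesssim\|\phi\|_{H^{t+r+l}}.
\]
This is a Hardy-type averaging estimate. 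The dilation average $f\mapsto\int_0^1(1-\tau)^{r-1}f(\tau\cdot)\,d\tau$ is bounded on $H^\sigma$ for $\sigma<d/2$, with a loss in the $\tau\to0$ integral that is controlled when the degree-$r$ Taylor part is removed.

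\textbf{Step 3: reassemble and apply the product estimate.} We have
\[
\chi h_cV\phi=\sum_{|\bm\alpha|=r}(\bm x^{\bm\alpha}h_cV)\,\phi_{\bm\alpha}.
\]
Here $\bm x^{\bm\alpha}h_cV\in H^{s+r}(\mathbb R^d)$ with norm $\le\|V\|_{PSH^{s,r}}$, and $\phi_{\bm\alpha}\in H^{t+l}$. Lemma~\ref{Lem5.1} with the triple $(s+r,\,t+l,\,t+r)$ requires
\[
t+r\le\min\{s+r,\,t+l\},\qquad t+r+d/2<s+r+t+l,\qquad s+r+t+l\ge0.
\]
The last two follow from \eqref{Eq-5.0-01}. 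The first condition $t+r\le t+l$ fails in general, so this naive pairing loses $r$ derivatives on $\phi_{\bm\alpha}$.

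\textbf{Main obstacle: recovering those $r$ derivatives.} The fix is to distribute derivatives via the diagram \eqref{eq-sec6.1-diag}. Compute $\partial^{\bm\beta}$ with $|\bm\beta|\le r$ of $\chi V\phi$ by Leibniz. In each term $\partial^{\bm\gamma}V\,\partial^{\bm\beta-\bm\gamma}\phi$, write $\partial^{\bm\beta-\bm\gamma}\phi$, which vanishes to order $r-|\bm\beta|+|\bm\gamma|$ at $\bm 0$, as a sum of $\bm x^{\bm\alpha}$ times remainders with $|\bm\alpha|=|\bm\gamma|+r-|\bm\beta|$. Then pair $\bm x^{\bm\alpha}\partial^{\bm\gamma}V$, which lies in $H^{s+r-|\bm\beta|}$ by \eqref{eq-sec6.1-04}, against the remainder, which lies in $H^{t+l+|\bm\beta|-|\bm\gamma|-|\bm\alpha|+\dots}$ by the Hardy estimate. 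After this shift of regularity, Lemma~\ref{Lem5.1} is applied at level $t$ rather than $t+r$, where \eqref{Eq-5.0-01} holds exactly. It is enough to take $|\bm\beta|=r$, together with the $H^t$ bound of lower order, to control the $H^{t+r}$ norm. I expect the bookkeeping of this Hardy/Taylor step, especially for negative and fractional $t$, to be the genuinely delicate part. For non-integer smoothness I would first prove it for integer endpoints and then interpolate.
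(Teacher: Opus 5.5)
Your core mechanism is exactly the paper's Case~2 argument: Leibniz on $\partial^{\bm\beta}(V\phi)$, Taylor-factoring $\partial^{\bm\beta-\bm\gamma}\phi$ through monomials with $|\bm\alpha|=|\bm\gamma|+r-|\bm\beta|$, placing $\bm x^{\bm\alpha}\partial^{\bm\gamma}[h_cV]$ in $H^{s+r-|\bm\beta|}$ via \eqref{eq-sec6.1-04}, and closing with Lemma~\ref{Lem5.1}. The gap is the range of $t$. You close everything at level $t$ (top derivatives $|\bm\beta|=r$ plus the $H^{t}$ norm) and say \eqref{Eq-5.0-01} ``holds exactly'' there. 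But \eqref{Eq-5.0-01} is only available for $-1\le t\le s$, while the lemma claims $-1-r\le t\le s$. In fact your level-$t$ closure is only justified for $t\in(s-1,s]$ with $t\ge -1$, for two reasons.

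\emph{(a) The Taylor remainder bound.} The estimate $\|\phi_{\bm\alpha}\|_{H^{t+l}}\lesssim\|\phi\|_{H^{t+r+l}}$ (Lemma~\ref{Lem6.3}) needs $t+l>\max\{0,d/2-1\}$. The dilation average scales like $\tau^{\sigma-d/2}$ in $\dot H^{\sigma}$, so it is bounded for $\sigma$ \emph{above} $d/2-1$, not for $\sigma<d/2$ as you wrote. This holds for $t>s-1$ because $s+l-1\ge\max\{0,d/2-1\}$. It fails, e.g., for large $s$ (so $l=0$) and $t=-1$, where the remainder would have to lie in $H^{-1}$.

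\emph{(b) The product estimate.} When $-1<s<0$, the window $(s-1,s]$ reaches below $-1$, and there $s+t+l\ge0$ fails. Take the 1D Dirac delta ($s=-1/2-\varepsilon$, $s+l=1$) with $r=1$ and $t=-1.2$. Your top-order pairing $H^{s}\times H^{t+l}\to H^{t}$ is then not admissible. Indeed the Leibniz term $V\,\partial\phi$, with $\partial\phi\in H^{0.3+\varepsilon}$ only, is not even defined.

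Both regimes are used later. Step~1 of the proof of Lemma~\ref{Lem2.1} takes $t\in[s-1,s]$, and the bootstrap in Lemma~\ref{Lem6.6} starts at $t+r=-1$. Interpolating between integer endpoints does not help, since the obstruction is in the index conditions, not in fractional smoothness.

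What is missing is the paper's case split.
\begin{itemize}
\item If $t\le s-1$, do not Taylor-expand at all. Write $t+r=(t+1)+(r-1)$ and apply the $(r-1)$-statement at level $t+1$; this is induction on $r$ with base case Lemma~\ref{Lem5.2}. The range condition and the vanishing conditions for $|\bm\alpha|<r-1$ are inherited.
\item If $s-1<t\le s$ and $s\ge0$, then $t>-1$ and your argument goes through.
\item If $s-1<t\le s$ and $-1<s<0$, use only $|\bm\beta|\le r-1$, measured in $H^{t+1}$: $\|V\phi\|_{H^{t+r}}\lesssim\sum_{|\bm\beta|\le r-1}\|\partial^{\bm\beta}(V\phi)\|_{H^{t+1}}$. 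Factor one extra power, $|\bm\alpha|=|\bm\gamma|+1$; this is allowed since $\partial^{\bm\beta-\bm\gamma}\phi$ vanishes to order $r-|\bm\beta|+|\bm\gamma|\ge|\bm\gamma|+1$. Then $\bm x^{\bm\alpha}\partial^{\bm\gamma}[h_cV]\in H^{s+1}$ is paired with a remainder in $H^{t+l}$ into $H^{t+1}$. Lemma~\ref{Lem5.1} permits this because $l\ge 1-s>1$ and $s+1+t+l>0$.
\end{itemize}
Your general bookkeeping with $|\bm\beta|=r-1$ already contains this last pairing. In that regime it, not your $|\bm\beta|=r$ closure, is what has to be used.
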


The proof of this lemma relies on an auxiliary lemma stated in Lemma~\ref{Lem6.3}, whose proof is deferred to Appendix~\ref{prooflem2}.

\begin{lemma}\label{Lem6.3}
    Let $m> \max\{0,d/2-1\}$, $r \in \mathbb{N}$. If $\phi \in H^{m+r}$ satisfies ${D}^{\bm \alpha}_{\bm 0}[\phi] = 0$ for every $|\bm \alpha| <r$, then $\phi$ admits the decomposition
    \begin{equation}\label{eq-Lem6.3-01}
        \phi(\bm x) = \sum_{|\bm \alpha|=r}\bm x^{\bm \alpha}\phi_{\bm \alpha}(\bm x),\quad \|\phi_{\bm \alpha}\|_{H^{m}(B_R)}\lesssim \|\phi\|_{H^{m+r}},
    \end{equation}
    where $B_R$ denotes the ball centered at $\bm x=\bm 0$ with radius $R>0$.
\end{lemma}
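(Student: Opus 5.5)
The plan is to obtain the decomposition from Taylor's formula with integral remainder, applied in a neighborhood of the origin, and then read off the Sobolev bound from the boundedness of a family of dilation-averaging operators. Since $m+r>d/2-1+r$ and $m>0$, and since Lemma~\ref{Lem5.5} gives embeddings into $W^{j,\infty}$ for $j<m+r-d/2$, the vanishing conditions $\mathcal{D}^{\bm\alpha}_{\bm 0}[\phi]=0$ for $|\bm\alpha|<r$ are well defined. This needs $r-1<m+r-d/2$, i.e.\ $m>d/2-1$, which is exactly the hypothesis.

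Taylor's formula of order $r$ at $\bm 0$ with integral remainder gives, for $r\ge1$,
\[
\phi(\bm x)=\sum_{|\bm\alpha|=r}\bm x^{\bm\alpha}\,\phi_{\bm\alpha}(\bm x),\qquad
\phi_{\bm\alpha}(\bm x):=\frac{r}{\bm\alpha!}\int_0^1(1-\tau)^{r-1}\,(\partial^{\bm\alpha}\phi)(\tau\bm x)\,\mathrm{d}\tau .
\]
This holds first for smooth $\phi$ with vanishing lower-order derivatives at $\bm 0$. It extends by density, approximating $\phi$ in $H^{m+r}$ by trigonometric polynomials corrected by a fixed finite combination of smooth cutoff monomials to restore the vanishing conditions; the correction is continuous because the point functionals are continuous on $H^{m+r}$. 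The case $r=0$ is trivial with $\phi_{\bm 0}=\phi$.

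The Sobolev bound then reduces to showing that the averaging operator $g\mapsto \int_0^1(1-\tau)^{r-1}g(\tau\,\cdot)\,\mathrm{d}\tau$ is bounded on $H^m(B_R)$, applied to $g=\partial^{\bm\alpha}\phi\in H^m$. Choose a smooth cutoff $\chi$ equal to $1$ on $B_R$ and extend $g$ as $\chi g\in H^m(\mathbb{R}^d)$; since $\tau\le 1$ maps $B_R$ into itself, only the values on $B_R$ matter. The dilation $g\mapsto g(\tau\cdot)$ satisfies $\|g(\tau\cdot)\|_{\dot H^{m}(\mathbb R^d)}=\tau^{m-d/2}\|g\|_{\dot H^m}$. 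The $L^2$ part scales as $\tau^{-d/2}$, but on the bounded set $B_R$ one can instead use the homogeneous seminorm together with a Poincaré-type argument, or bound the low part by $\|g\|_{L^\infty}$ when available.

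By Minkowski's integral inequality, the averaged operator is bounded provided $\int_0^1(1-\tau)^{r-1}\tau^{m-d/2}\,\mathrm{d}\tau<\infty$, i.e.\ $m-d/2>-1$. This is precisely the hypothesis $m>d/2-1$, which explains the threshold in the statement. The lower-order (inhomogeneous) part of the norm on $B_R$ needs separate care. I will handle it by interpolating between the $L^2(B_R)$ norm and the top homogeneous seminorm, using $m>0$. The $L^2(B_R)$ norm of $g(\tau\cdot)$ is $\tau^{-d/2}\|g\|_{L^2(B_{\tau R})}$, and I will control it by the fractional Hardy-type inequality $\|g\|_{L^2(B_\rho)}\lesssim \rho^{\min(m,d/2)^-}\|g\|_{H^m}$ (Hölder plus Sobolev embedding into $L^p$). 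This gives a factor $\tau^{\min(m,d/2)^- -d/2}$, integrable since $m>d/2-1$ and $d/2-1<m$; a more careful combination of the two estimates may be needed when $m<d/2$.

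For non-integer $m$ I will use the Slobodeckij seminorm, which also scales as $\tau^{m-d/2}$ under dilation, so the same Minkowski argument applies. The main obstacle is this careful handling of the inhomogeneous and low-frequency part of the $H^m(B_R)$ norm under dilation near $\tau=0$. If the local Hardy bound fails, I would instead write the $H^m(B_R)$ norm through derivatives up to order $\lfloor m\rfloor$ and apply the scaling to each derivative $\partial^{\bm\beta}g(\tau\cdot)=\tau^{|\bm\beta|}(\partial^{\bm\beta}g)(\tau\cdot)$, which only improves the powers of $\tau$.
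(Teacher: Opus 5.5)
Your proposal is correct and follows essentially the same route as the paper. Taylor's formula with integral remainder reduces the claim to boundedness of the dilation average $g\mapsto\int_0^1(1-t)^{r-1}g(t\,\cdot)\,\mathrm{d}t$ on $H^m(B_R)$. The top Slobodeckij seminorm is then handled by Minkowski's inequality and the scaling factor $t^{m-d/2}$, which is integrable exactly when $m>d/2-1$. The $L^2(B_R)$ part is handled by H\"older on $B_{tR}$ together with the embedding $H^m(B_R)\hookrightarrow L^{2p}(B_R)$.

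Your hedge for $m<d/2$ is unnecessary: the resulting factor $t^{m-d/2}$, or $t^{-\varepsilon}$ when $m\ge d/2$, is already integrable. Your density argument justifying the Taylor identity for non-smooth $\phi$ is a detail the paper leaves implicit.
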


\begin{proof}[Proof of Lemma~\ref{Lem6.2}]
    We prove by induction on $r$. The case $r = 0$ follows from Lemma~\ref{Lem5.2}. Assume that, for some $r\ge 1$, the statement holds for every $0\le r'<r$. We show that it also holds for $r$.
    The proof is divided into three cases.
    
    \textbf{Case 1.} If $t\leq s-1$, set $r'=r-1, t'=t+1$. Then $-1\leq t'+r'\leq s+r'$, the induction hypothesis with $r'$ applied to $t'$ and $\phi$ implies
    \begin{equation}\label{eq-Lem6.2-02}
        \|V\phi\|_{H^{t'+r'}}\lesssim \|V\|_{PSH^{s,r'}}\|\phi \|_{H^{t'+r'}}\leq \|V\|_{PSH^{s,r}}\|\phi\|_{H^{t'+r'}},
    \end{equation}
    which is the desired \eqref{eq-Lem6.2-01} since $t'+r'=t+r$.
    
    \textbf{Case 2.} If $s-1<t\leq s$ and $s\geq 0$, then $t\geq -1 $. We claim that for all $0\leq |\bm \beta|\leq r$, if a function $f$ satisfies $\mathcal{D}_{\bm 0}^{\bm \alpha}f = 0$ for every $|\bm \alpha|< |\bm \beta|$, then
    \begin{equation}\label{eq-Lem6.2-03}
        \|\mathcal{D}^{\bm \beta}[V]f\|_{H^{t}}\lesssim \|V\|_{PSH^{s,|\bm \beta|}}\|f\|_{H^{t+l+|\bm \beta|}}.
    \end{equation}
    To prove this claim, choose $R>0$ such that $\Omega_{\mathrm{lc}}\subset B_{R}$. Since $t+l>s+l-1 \geq \max\{0,d/2-1\}$, we decompose $f = \sum_{|\bm \alpha|=|\bm \beta|}\bm x^{\bm \beta}f_{\bm \alpha}$ via Lemma~\ref{Lem6.3}, with estimate $\|f_{\bm \alpha}\|_{H^{t+l}(B_R)}\lesssim \|f\|_{H^{t+l+|\bm \beta|}}$. Let $\mathcal{I}$ be the extension operator from $B_{R}$ to $\mathbb{R}^d$ that satisfies $\mathcal{I}f_{\bm \alpha}|_{B_R}=f_{\bm \alpha}$ and $\|\mathcal{I}f_{\bm \alpha}\|_{H^{t+l}(\mathbb{R}^d)}\lesssim \|f\|_{H^{t+l}(B_R)}$. 
    Then we prove the claim \eqref{eq-Lem6.2-03} by 
    \begin{equation}\label{eq-Lem6.2-04}
    \begin{aligned}
        \bnm{\mathcal{D}^{\bm \beta}[V]f}_{H^{t}}
        &\lesssim
        \bnm{\mathcal{D}^{\bm \beta}[h_cV]f}_{H^{t}(\mathbb{R}^d)}\\
        &\leq 
        \sum_{|\bm \alpha|=|\bm \beta|}
        \bnm{\bm x^{\bm \alpha}\mathcal{D}^{\bm \beta}[h_cV]\mathcal{I}f_{\bm \alpha}}_{H^{t}(\mathbb{R}^d)}\\
        &\lesssim 
        \sum_{|\bm \alpha|=|\bm \beta|}
        \bnm{\bm x^{\bm \alpha}\mathcal{D}^{\bm \beta}[h_cV]}_{H^{s}(\mathbb{R}^d)}
        \nm{\mathcal{I}f_{\bm \alpha}}_{H^{t+l}(\mathbb{R}^d)}\\
        &\lesssim \nm{V}_{PSH^{s,|\bm \beta|}}\|f\|_{H^{t+r+l}}.
    \end{aligned}
    \end{equation}
    Here the third inequality follows from the embedding $H^{s}(\mathbb{R}^d)\times H^{t+l}(\mathbb{R}^d)\hookrightarrow H^{t}(\mathbb{R}^d)$ implied by Lemma~\ref{Lem5.1}, and the last inequality follows  from \eqref{eq-sec6.1-04}. Applying the claim \eqref{eq-Lem6.2-03} to $f = \mathcal{D}^{\bm \alpha}\phi$ for every $|\bm \alpha|\leq r-|\bm \beta|$, we obtain the desired \eqref{eq-Lem6.2-01} by
    \begin{equation}\label{eq-Lem6.2-05}
    \begin{aligned}
    \|V\phi\|_{H^{t+r}}
    &\lesssim
    \sum_{|\bm \beta|\leq r}\bigl\|\mathcal{D}^{\bm \beta}[V\phi]\bigr\|_{H^{t}}
    \\&\lesssim
    \sum_{|\bm \beta|\leq r}\sum_{|\bm \alpha|\leq r-|\bm \beta|}\bnm{\mathcal{D}^{\bm \beta}[V]\mathcal{D}^{\bm \alpha}[\phi]}_{H^t}.
    \\&\lesssim
    \sum_{|\bm \beta|\leq r}\sum_{|\bm \alpha|\leq r-|\bm \beta|}
    \nm{V}_{PSH^{s,|\bm \beta|}}\bnm{\mathcal{D}^{\bm \alpha}\phi}_{H^{t+|\bm \beta|+l}}
    \\&\lesssim
    \nm{V}_{PSH^{s,r}}\nm{\phi}_{H^{t+l+|\bm \beta|}}.
    \end{aligned}
    \end{equation}  
   
    \textbf{Case 3.} If $s-1<t\leq s$ and $-1<s<0$, then $l\geq1-s>1$. 
    We modify \eqref{eq-Lem6.2-03} into the following claim. For $0\leq |\bm \beta|\leq r-1$, if a function $f$ satisfies $\mathcal{D}_{\bm 0}^{\bm \alpha}f = 0$ for every $|\bm \alpha|<|\bm \beta|+1$, then
    \begin{equation}\label{eq-Lem6.2-06}
        \|\mathcal{D}^{\bm \beta}[V]f\|_{H^{t+1}}\lesssim \|V\|_{PSH^{s,|\bm \beta|+1}}\|f\|_{H^{t+l+|\bm \beta|+1}}.
    \end{equation}
    The proof of this claim follows from the same argument in \eqref{eq-Lem6.2-04} 
    \begin{equation}\label{eq-Lem6.2-07}
    \begin{aligned}
        \bnm{\mathcal{D}^{\bm \beta}[V]f}_{H^{t+1}}
        &\lesssim 
        \sum_{|\bm \alpha|=|\bm \beta|+1}
        \bnm{\bm x^{\bm \alpha}\mathcal{D}^{\bm \beta}[h_cV]\mathcal{I}f_{\bm \alpha}}_{H^{t+1}(\mathbb{R}^d)}
        \\&\lesssim 
        \sum_{|\bm \alpha|=|\bm \beta|+1}
        \bnm{\bm x^{\bm \alpha}\mathcal{D}^{\bm \beta}[h_cV]}_{H^{s+1}(\mathbb{R}^d)}
        \nm{\mathcal{I}f_{\bm \alpha}}_{H^{t+l}(\mathbb{R}^d)}\\
        &\lesssim \nm{V}_{PSH^{s,|\bm \beta|+1}}\|f\|_{H^{t+l+|\bm \beta|+1}}.
    \end{aligned}
    \end{equation}
    The second inequality follows from the embedding $H^{s'}(\mathbb{R}^d)\times H^{t'+l'}(\mathbb{R}^d)\hookrightarrow H^{t'}(\mathbb{R}^d)$ implied by Lemma~\ref{Lem5.1} with $t'=t+1,\ s'=s+1$ and $l'=\min\{0,l-1\}=l-1$, and the last inequality follows  from \eqref{eq-sec6.1-04}. Applying the claim \eqref{eq-Lem6.2-06} to $f = \mathcal{D}^{\bm \alpha}\phi$ for every $|\bm \alpha|\leq r-1-|\bm \beta|$, we obtain the desired \eqref{eq-Lem6.2-01} by
    \begin{equation}\label{eq-Lem6.2-08}
    \begin{aligned}
    \|V\phi\|_{H^{t+r}}
    &\lesssim
    \sum_{|\bm \beta|\leq r-1}\bigl\|\mathcal{D}^{\bm \beta}[V\phi]\bigr\|_{H^{t+1}}
    \\&\lesssim
    \sum_{|\bm \beta|\leq r-1}\sum_{|\bm \alpha|\leq r-1-|\bm \beta|}\bnm{\mathcal{D}^{\bm \beta}[V]\mathcal{D}^{\bm \alpha}[\phi]}_{H^{t+1}}.
    \\&\lesssim
    \sum_{|\bm \beta|\leq r-1}\sum_{|\bm \alpha|\leq r-1-|\bm \beta|}
    \nm{V}_{PSH^{s,|\bm \beta|+1}}\bnm{\mathcal{D}^{\bm \alpha}\phi}_{H^{t+l+|\bm \beta|+1}}
    \\&\lesssim
    \nm{V}_{PSH^{s,r}}\nm{\phi}_{H^{t+r+l}}.
    \end{aligned}
    \end{equation}  
\end{proof}

At the end of this section, we establish Lemma~\ref{Lem6.4}, showing that $f\in PSH^{t,r}$ has $H^{t+r}$ regularity away from the singular point $\bm x=\bm 0$.

\begin{lemma}\label{Lem6.4} Let $f\in PSH^{t,r}$ with $t\in \mathbb{R}$ and $r\in \mathbb{N}$. For any $\delta>0$, $f$ admits a decomposition $f= f^{\mathrm{L}}+f^{\mathrm{G}}$ such that $f^{\mathrm{L}}=0$ on $[0,1]^{d}\setminus B_{\mathrm{per}}(\delta)$ and
    \begin{equation}
    \|f^{\mathrm{L}}\|_{PSH^{t,r}}
    +\|f^{\mathrm{G}}\|_{H^{t+r}}
    \lesssim \| f \|_{PSH^{t,r}},
    \end{equation}
\end{lemma}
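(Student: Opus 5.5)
The plan is to split $f$ with a smooth periodic cutoff around the singular point. Fix $\delta>0$ (w.l.o.g. $\delta<1/4$) and pick $\chi\in C^\infty_c(B_\delta)$ with $\chi\equiv1$ on $B_{\delta/2}$. Let $\chi_{\mathrm{per}}(\bm x):=\sum_{\bm n}\chi(\bm x+\bm n)$ be its periodization, which is smooth and supported in $B_{\mathrm{per}}(\delta)$. Set
\[
f^{\mathrm L}:=\chi_{\mathrm{per}}f,\qquad f^{\mathrm G}:=(1-\chi_{\mathrm{per}})f .
\]
Then $f^{\mathrm L}$ vanishes on $[0,1]^d\setminus B_{\mathrm{per}}(\delta)$ by construction, so it remains to prove the two norm bounds.

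\textbf{Bound on $f^{\mathrm L}$.} We use the norm \eqref{eq-sec6.1-03.1}. Since $h_c\chi_{\mathrm{per}}=h_c\chi$ on $\Omega_{\mathrm{lc}}$ (only the $\bm n=\bm 0$ image meets $\Omega_{\mathrm{lc}}$ for small $\delta$), we have
\[
\bm x^{\bm\alpha}h_cf^{\mathrm L}=\chi\cdot\bigl(\bm x^{\bm\alpha}h_cf\bigr).
\]
Multiplication by the Schwartz function $\chi$ is bounded on $H^{\sigma}(\mathbb R^d)$ for every real $\sigma$; for negative $\sigma$ this follows by duality. Hence
\[
\|\bm x^{\bm\alpha}h_cf^{\mathrm L}\|_{H^{t+|\bm\alpha|}(\mathbb R^d)}\lesssim\|\bm x^{\bm\alpha}h_cf\|_{H^{t+|\bm\alpha|}(\mathbb R^d)},
\]
and summing over $|\bm\alpha|\le r$ gives $\|f^{\mathrm L}\|_{PSH^{t,r}}\lesssim\|f\|_{PSH^{t,r}}$.

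\textbf{Bound on $f^{\mathrm G}$.} This is the main step. The function $g:=1-\chi_{\mathrm{per}}$ is smooth, periodic, and vanishes near $\bm 0$ (mod $\mathbb Z^d$). Write $g=\sum_{|\bm\alpha|=r}m_{\bm\alpha}\,g_{\bm\alpha}$ with smooth periodic $g_{\bm\alpha}$. This is possible because $\{m_{\bm\alpha}\}_{|\bm\alpha|=r}$ have no common zero off the lattice $\mathbb Z^d$. Concretely, set $w:=\sum_{|\bm\alpha|=r}|m_{\bm\alpha}|^2$, which is smooth, periodic, and positive away from $\mathbb Z^d$, and take $g_{\bm\alpha}:=g\,\overline{m_{\bm\alpha}}/w$. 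These are smooth because $g$ vanishes in a neighborhood of the zeros of $w$.

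Then
\[
\|f^{\mathrm G}\|_{H^{t+r}}\le\sum_{|\bm\alpha|=r}\|g_{\bm\alpha}\,m_{\bm\alpha}f\|_{H^{t+r}}\lesssim\sum_{|\bm\alpha|=r}\|m_{\bm\alpha}f\|_{H^{t+r}},
\]
using that smooth periodic multipliers are bounded on every periodic $H^\sigma$. By the implication (i)$\Rightarrow$(iii) of Lemma~\ref{Lem6.1}, in its quantitative form (the proof there gives $\|m_{\bm\alpha}f\|_{H^{t+|\bm\alpha|}}\lesssim\|\bm x^{\bm\alpha}f_{\mathrm{lc}}\|$ with $f_{\mathrm{lc}}=h_cf$), this last sum is $\lesssim\|f\|_{PSH^{t,r}}$.

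\textbf{Expected obstacle.} The only delicate point is making this norm comparison quantitative, in particular handling the periodic sum of finitely many translates of $h_cf$ in the (iii)-type estimate. I expect this to follow exactly as in the proof of Lemma~\ref{Lem6.1}, since $m_{\bm\alpha}/\bm x^{\bm\alpha}$ is smooth and only finitely many translates meet $[0,1]^d$.
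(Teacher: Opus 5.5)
Your proof is correct and takes essentially the same route as the paper. The paper also defines $f^{\mathrm L}$ as the periodization of a smooth cutoff (equal to $1$ on $B_{\delta/2}$) times $h_cf$, which coincides with your $\chi_{\mathrm{per}}f$ for small $\delta$, and bounds $f^{\mathrm G}$ by writing the complementary cutoff as smooth multiples of degree-$r$ monomials, $\frac{1-h_c'}{\sum_j x_j^{2r}}\sum_j x_j^r\,(x_j^r f_{\mathrm{lc}})$ on $\mathbb{R}^d$; you do the same division on the torus with the weight $\sum_{|\bm\alpha|=r}|m_{\bm\alpha}|^2$ and close via the quantitative form of (i)$\Rightarrow$(iii) in Lemma~\ref{Lem6.1}, which is equally valid.
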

where the $B_{\mathrm{per}}(\delta)$ denotes the periodic ball of radius $\delta$
\[
B_{\mathrm{per}}(\delta)
:=
\left\{
\bm x\in [0,1]^d :
|\bm x+\bm n|<\delta
\text{ for some }
\bm n\in\mathbb Z^d
\right\}.
\]

\begin{proof}
  Define $f_{\mathrm{lc}} = h_cf$. Let $h'_c: \mathbb{R}^d \rightarrow [0,1]$ be a $C^\infty$ smooth cutoff function that is $1$ inside the ball of radius $\delta/2$, $0$ outside the ball of radius $\delta$. Define
$
f^{\mathrm{L}}(\bm x):= \sum_{\bm n \in \mathbb{Z}^d} h_c'\bigl(\bm x+\bm n\bigr)f_{\mathrm{lc}}(\bm x+\bm n),
$
which is 0 on $[0,1]^{d}\setminus B_{\mathrm{per}}(\delta)$. Since $h\in C^{\infty}(\mathbb{R}^d)$, we have $ \|f^{\mathrm{L}}\|_{PSH^{t,r}}\lesssim \| f\|_{PSH^{t,r}}$. Note that $f^{\mathrm{G}}:=f-f^{\mathrm{L}}$ is the periodic sum of $(1-h_c')f_{\mathrm{lc}}$, where
\[
(1-h_c'(\bm x))f_{\mathrm{lc}}(\bm x) = \frac{1-h_c'(\bm x)}{\sum_{j=1}^dx_{j}^{2r}}\sum_{j=1}^{d}x_j^{r}\bigl( x_j^{r} f_{\mathrm{lc}}(\bm x)\bigr).
\]
Since $(1-h_c'(\bm x))x_j^r/(\sum_{j=1}^{d} x_j^{2r}) \in C^{\infty}(\Omega_{\mathrm{lc}})$, we obtain
\begin{equation*}
\| f^{\mathrm{G}}\|_{H^{t+r}}
\lesssim \|(1-h_c')f_{\mathrm{lc}}\|_{H^{t+r}(\mathbb{R}^d)} 
\lesssim \sum_{j=1}^{d} \bigl\|x_j^rf_{\mathrm{lc}}\bigr\|_{H^{t+r}(\mathbb{R}^d)} \leq \| f\|_{PSH^{t,r}}.
\end{equation*}
\end{proof}

\subsection{Existence and regularity of asymptotic function}\label{subsec:AR2}
This section studies the asymptotic functions 
$\{\Phi_{\bm \alpha} : |\bm \alpha| \leq 1\}$, which satisfy
\begin{equation}\label{eq-sec6.2-01}
(\mathcal{T}+\mathcal{V}) \Phi_{\bm \alpha} \in H^{s+2}, 
\qquad 
\mathcal{D}_{\bm 0}^{\bm \alpha'}[\Phi_{\bm \alpha}] = \delta_{\bm \alpha,\bm \alpha'}, 
\quad \forall\, |\bm \alpha'|\leq |\bm \alpha|,
\end{equation}
for potentials $V \in PSH^{s,2}$. Lemma~\ref{Lem6.5} guarantees the existence of $\Phi_{\bm \alpha}$. Its proof, based on an iterative regularity-bootstrapping procedure for the residual, is deferred to Appendix~\ref{proofLem6.5}.
Provided that $\Phi_{\bm \alpha}$ satisfies \eqref{eq-sec6.2-01}, Lemma~\ref{Lem6.6} implies
$
\Phi_{\bm \alpha} \in PSH^{s+2+|\bm \alpha|,\,2-|\bm \alpha|}.
$

For convenience in the proof, the statements of Lemmas~\ref{Lem6.5} and \ref{Lem6.6} are formulated for the operator $\mathcal{T}+1+\mathcal{V}$.
Since $V'=V-1\in PSH^{s,2}$, applying  Lemmas~\ref{Lem6.5} and \ref{Lem6.6} to $V'$ recovers \eqref{eq-sec6.2-01}.

\begin{lemma}\label{Lem6.5}
Let $V \in PSH^{s,2}$. 
Then for every multi-index $\bm \alpha \in \mathbb{N}^d$ with $|\bm \alpha| \leq 1$, there exists a function $\Phi_{\bm \alpha} \in H^1$ satisfying
\begin{equation}\label{eq-Lem6.5-01}
(\mathcal{T}+1+\mathcal{V}) \Phi_{\bm \alpha} = R_{\bm \alpha}\in H^{s+2}, 
\qquad 
\mathcal{D}^{\bm \alpha'}_{\bm 0}[\Phi_{\bm \alpha}] = \delta_{\bm \alpha',\bm \alpha}, 
\quad \forall\, |\bm \alpha'|\leq |\bm \alpha|.
\end{equation}
\end{lemma}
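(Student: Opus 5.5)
We construct $\Phi_{\bm\alpha}$ by a finite regularity-bootstrapping iteration, starting from a smooth function with the prescribed jet at $\bm 0$. Take $\Phi^{(0)}_{\bm 0}\equiv 1$ and, for $|\bm\alpha|=1$, $\Phi^{(0)}_{\bm\alpha}=(2\pi i)^{-1}m_{\bm\alpha}$. Both are $C^\infty$ and satisfy $\mathcal{D}^{\bm\alpha'}_{\bm 0}[\Phi^{(0)}_{\bm\alpha}]=\delta_{\bm\alpha',\bm\alpha}$ for $|\bm\alpha'|\le|\bm\alpha|$. Their residual is
\[
R^{(0)}:=(\mathcal{T}+1+\mathcal{V})\Phi^{(0)}_{\bm\alpha}=(\mathcal T+1)\Phi^{(0)}_{\bm\alpha}+V\Phi^{(0)}_{\bm\alpha}.
\]
Since $V\in PSH^{s,2}$ and $\Phi^{(0)}_{\bm\alpha}$ is smooth, the product $V\Phi^{(0)}_{\bm\alpha}$ lies in $PSH^{s,2}$. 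For $|\bm\alpha|=1$ the function vanishes at $\bm 0$, so writing $m_{\bm\alpha}V$ as in Lemma~\ref{Lem6.1}(iii) places it in $PSH^{s+1,1}$. In general, $R^{(0)}\in PSH^{s+|\bm\alpha|,2-|\bm\alpha|}$.

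At each step $k$ we correct by
\[
\Phi^{(k+1)}_{\bm\alpha}=\Phi^{(k)}_{\bm\alpha}-w_k+c_k,
\]
where $w_k:=(\mathcal T+1)^{-1}R^{(k)}$ and $c_k$ is a smooth function built from $1$ and $m_{\bm\alpha'}$ that restores the jet conditions, i.e.\ subtracts $\mathcal D^{\bm\alpha'}_{\bm 0}[w_k]$ for $|\bm\alpha'|\le|\bm\alpha|$. These point values make sense as soon as $w_k\in H^{m}$ with $m>d/2+|\bm\alpha|$. The new residual is
\[
R^{(k+1)}=-\mathcal V(w_k-c_k)+(\mathcal T+1+\mathcal V)c_k-\ldots,
\]
so after collecting terms the only rough part is $-V(w_k-c_k)$ plus a smooth multiple of $V$. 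The key gain comes from Lemma~\ref{Lem6.2}. Because $w_k-c_k$ (more precisely, the part of it that is not already absorbed) has vanishing jet of order $<r$ at $\bm 0$, we get $\|V(w_k-c_k)\|_{H^{t+r}}\lesssim\|w_k-c_k\|_{H^{t+r+l}}$. Since $w_k$ gains two derivatives and $V$ costs $l<2$ of them, each step gains $b=2-l>0$ derivatives of regularity in the residual. The jet-matching terms $V c_k$ are of the same type as $V\Phi^{(0)}$, so they are absorbed by redefining $\Phi^{(k)}$ as a linear combination whose coefficients are fixed at the end.

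The cleanest way to organize this is to work with the Lemma~\ref{Lem6.4} splitting. For any residual $f\in PSH^{t,r}$, write $f=f^{\mathrm L}+f^{\mathrm G}$ with $f^{\mathrm G}\in H^{t+r}$. We invert $(\mathcal T+1)$ only on the singular part, and show that $(\mathcal T+1)^{-1}$ maps $PSH^{t,r}$ into $PSH^{t+2,r}$. This holds because multiplication by $m_{\bm\alpha}$ commutes with $\mathcal T$ up to lower-order terms: the commutator $[\mathcal T,m_{\bm\alpha}]$ involves derivatives of $m_{\bm\alpha}$ times derivatives, and is handled by induction on $|\bm\alpha|$ through the diagram \eqref{eq-sec6.1-diag}. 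After finitely many steps (about $\lceil (2-|\bm\alpha|)/b\rceil$ plus a few more until the $H^{s+2}$ threshold is reached), the residual lies in $H^{s+2}$. We take $\Phi_{\bm\alpha}$ to be the last iterate and set $R_{\bm\alpha}$ to be its residual. The iterate lies in $H^1$ because each $w_k\in H^{s+2}\subset H^1$, using $s>-1$.

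The main obstacle is the bookkeeping of the jet conditions when $s$ is small, in particular when $s+2\le d/2+|\bm\alpha|$. In that regime the point values $\mathcal D^{\bm\alpha'}_{\bm 0}[w_k]$ may not be defined at the first steps, and Lemma~\ref{Lem6.2} requires vanishing of the jet. To handle this I would first bootstrap the regularity using only Lemma~\ref{Lem5.2} (the case $r=0$, which needs no jet condition) until $w_k\in H^{m}$ with $m>d/2+1$. This is possible because the target threshold $s+2+|\bm\alpha|$ exceeds $d/2+|\bm\alpha|$ under the standing assumption $s>d/2-2$. Only after that would I impose the jet corrections and invoke Lemma~\ref{Lem6.2} with $r=1,2$.
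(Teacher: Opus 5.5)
\textbf{Gap in the case $\bm\alpha=\bm 0$.} Your scheme is, up to bookkeeping, the paper's. With the same starting functions $m_{\bm\alpha}/(2\pi i)^{|\bm\alpha|}$, your jet-free remainder $w_k-c_k$ equals the paper's $-\Psi^{(k+1)}=\mathcal{R}_{r_k}(\mathcal T+1)^{-1}\mathcal V\Psi^{(k)}$, and $c_k$ is its $\mathcal{Q}_{r_k}$-part. However, you define $c_k$ to match only the jet of order $\le|\bm\alpha|$, and with that choice the construction fails for $\bm\alpha=\bm 0$.

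In that case $w_k-c_k$ vanishes only to order zero at $\bm 0$. So Lemma~\ref{Lem6.2} is available only with $r=1$, and its admissible range $t+r\le s+r$ caps $V(w_k-c_k)$ at $H^{s+1}$. This cap is genuine, not an artifact of the lemma. When $\nabla(w_k-c_k)(\bm 0)\neq\bm 0$ (the generic situation), $V(w_k-c_k)$ contains $\sum_j\partial_j(w_k-c_k)(\bm 0)\,x_jV$. The functions $x_jV$ lie in $H^{s+1}$ but in general not in $H^{s+2}$: for the 3D Coulomb singularity, $x_j/|\bm x|$ is locally in $H^{3/2-\varepsilon}$ only. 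Hence your iteration stalls with residual in $H^{s+1}$, and the claim that each step gains $b$ derivatives stops holding.

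The missing idea is that the lemma imposes no condition on $\nabla\Phi_{\bm 0}(\bm 0)$. For $\bm\alpha=\bm 0$ the correction may, and must, also remove the first-order jet of $w_k$, i.e.\ use $\mathcal{R}_2$, at least in the final step(s). This is why the paper lets $r_j$ range over $|\bm\alpha|+1\le r_j\le 2$ and takes $r_{k_0-1}=2$.

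\textbf{The ``main obstacle'' paragraph.} It targets the wrong case and proposes a remedy that cannot work. For $|\bm\alpha|=1$ there is no difficulty: $w_0-\Phi^{(0)}_{\bm\alpha}=(\mathcal T+1)^{-1}(m_{\bm\alpha}V)/(2\pi i)\in H^{s+3}$, and $s+3>d/2+1$, so the first-order jet is defined from the first step. The place where a derivative may genuinely be undefined is $\nabla w_k(\bm 0)$ for $\bm\alpha=\bm 0$, once you use $r=2$. There $w_0=1+(\mathcal T+1)^{-1}V$ is only in $H^{s+2}$, which need not embed in $W^{1,\infty}$ when $s\le d/2-1$.

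Bootstrapping with $r=0$ via Lemma~\ref{Lem5.2} cannot get you past this. Lemma~\ref{Lem5.2} only gives $\mathcal V\phi\in H^t$ for $t\le s$, so $(\mathcal T+1)^{-1}\mathcal V\phi$ never exceeds $H^{s+2}$. For $|\bm\alpha|=1$ such a step would even lose the $H^{s+3}$ regularity you start with, since $w_0(\bm 0)\neq0$ in general. The correct order is:
\begin{itemize}
\item first take $r=1$ steps, which need only $w_k(\bm 0)$, defined because $s+2>d/2$;
\item these raise the remainder to $H^{s+3}\hookrightarrow W^{1,\infty}$, after which $r=2$ steps are available.
\end{itemize}

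\textbf{Two smaller points.} With $\Phi^{(k+1)}=\Phi^{(k)}-w_k+c_k$ and $w_k=(\mathcal T+1)^{-1}R^{(k)}$, one gets exactly $R^{(k+1)}=-\mathcal V(w_k-c_k)+(\mathcal T+1)c_k$. Your formula with $(\mathcal T+1+\mathcal V)c_k$ counts $\mathcal Vc_k$ twice. There is therefore no leftover multiple of $V$, and no ``absorption by linear combination'' is needed; as stated, that step would not be justified anyway. Likewise, the $PSH^{t,r}\to PSH^{t+2,r}$ mapping of $(\mathcal T+1)^{-1}$ and the Lemma~\ref{Lem6.4} splitting are unnecessary. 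Only the Sobolev regularity of the jet-free remainder and the smoothness of $(\mathcal T+1)c_k$ enter the argument.
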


\begin{lemma}\label{Lem6.6}
Let $V \in PSH^{s,2}$ and $r \in \{0,1,2\}$. 
Suppose $\Phi \in H^1$ satisfies
\begin{equation}
(\mathcal{T}+1+\mathcal{V})\Phi = R \in H^{s+2}, 
\qquad 
\mathcal{D}_{\bm 0}^{\bm \alpha}[\Phi] = 0, \quad \forall\, |\bm \alpha| < r.
\end{equation}
Then $\Phi \in PSH^{s+2+r,\,2-r}$.
\end{lemma}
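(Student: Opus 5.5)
The plan is to prove Lemma~\ref{Lem6.6} by showing, via Lemma~\ref{Lem6.1}(iii), that $m_{\bm\beta}\Phi\in H^{s+2+r+|\bm\beta|}$ for every $|\bm\beta|\le 2-r$. I will commute the multiplier $m_{\bm\beta}$ through the equation and use elliptic regularity of $\mathcal{T}+1$. Rewrite the equation as
\begin{equation*}
(\mathcal{T}+1)\Phi = R-\mathcal{V}\Phi .
\end{equation*}
Multiplying by $m_{\bm\beta}$ and using the commutator $[\mathcal{T},m_{\bm\beta}]$, which is a first-order operator with smooth coefficients involving lower $m_{\bm\gamma}$, $\bm\gamma<\bm\beta$, gives
\begin{equation*}
(\mathcal{T}+1)(m_{\bm\beta}\Phi)= m_{\bm\beta}R - (m_{\bm\beta}V)\Phi + [\mathcal{T},m_{\bm\beta}]\Phi .
\end{equation*}
The argument is an induction on $|\bm\beta|$, nested with a bootstrap in the Sobolev index.

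The first step is the base regularity. Lemma~\ref{Lem5.4}, in its proof form with $\lambda=-1$ and the source $R\in H^{s+2}\subset H^{s}$, gives $\Phi\in H^{s+2}$. This settles $r=0$, $\bm\beta=\bm 0$.

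The second step is to gain $r$ extra derivatives from the vanishing conditions. If $\mathcal{D}^{\bm\alpha}_{\bm 0}\Phi=0$ for $|\bm\alpha|<r$, Lemma~\ref{Lem6.2} with $V\in PSH^{s,2}\hookrightarrow PSH^{s,r}$ gives $\|V\Phi\|_{H^{t+r}}\lesssim\|\Phi\|_{H^{t+r+l}}$. Bootstrapping $\Phi\in H^{t+r+l}\Rightarrow\Phi\in H^{t+r+2}$ then takes $\Phi$ up to $H^{s+2+r}$. To apply Lemma~\ref{Lem6.2} the point evaluations must make sense, i.e.\ $\Phi\in H^{r-1+d/2+}$, and this follows from $s+2>d/2$. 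The bootstrap starts from the $r-1$ case.

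The third step handles the weights $|\bm\beta|\ge1$. The term $m_{\bm\beta}R$ is trivially in $H^{s+2+|\bm\beta|}$ as needed only up to $H^{s+r+|\bm\beta|}$, since $R\in H^{s+2}$ and $r+|\bm\beta|\le 2$. The commutator term lies one derivative below $m_{\bm\gamma}\Phi$ with $|\bm\gamma|=|\bm\beta|-1$, and so it is controlled by the induction hypothesis. The crucial term is $(m_{\bm\beta}V)\Phi$. Here $m_{\bm\beta}V\in PSH^{s+|\bm\beta|,2-|\bm\beta|}$ by Lemma~\ref{Lem6.1}, so Lemma~\ref{Lem6.2} applies with $s$ replaced by $s+|\bm\beta|$ and $r$ unchanged. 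This yields
\begin{equation*}
\|(m_{\bm\beta}V)\Phi\|_{H^{s+|\bm\beta|+r}}\lesssim\|\Phi\|_{H^{s+|\bm\beta|+r+l'}},
\end{equation*}
with the smaller loss $l'\le l$, because the multiplier is now more regular. The right-hand side is bounded provided $|\bm\beta|+l'\le 2$, and Step 2 supplies exactly this. Elliptic regularity then gives $m_{\bm\beta}\Phi\in H^{s+2+r+|\bm\beta|}$.

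I expect the main obstacle to be the index bookkeeping in Step 3. The admissible range $-1\le t+r\le s+r$ of Lemma~\ref{Lem6.2} has to hold with shifted $s$. I also need $l'$, the loss for $PSH^{s+|\bm\beta|,\cdot}$ potentials, to satisfy $|\bm\beta|+l'\le 2$ uniformly, including the low-regularity case $-1<s<0$ where $l>1$. If this fails directly, I would instead split $V=V^{\mathrm L}+V^{\mathrm G}$ via Lemma~\ref{Lem6.4}: the smooth-away part $V^{\mathrm G}\in H^{s+2}$ is harmless, and the localized part is treated with the Taylor factorization of Lemma~\ref{Lem6.3}, writing $m_{\bm\beta}V\Phi=\sum (\bm x^{\bm\alpha}m_{\bm\beta}h_cV)\phi_{\bm\alpha}$ and applying Lemma~\ref{Lem5.1} directly, exactly as in Case~2 and Case~3 of the proof of Lemma~\ref{Lem6.2}.
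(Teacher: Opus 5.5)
Your proposal is correct and follows essentially the same route as the paper. You first bootstrap to $\Phi\in H^{s+2+r}$ with Lemma~\ref{Lem6.2} and the vanishing conditions. You then multiply by $m_{\bm\beta}$ and control $(m_{\bm\beta}V)\Phi$ by Lemma~\ref{Lem6.2} applied to $m_{\bm\beta}V\in PSH^{s+|\bm\beta|,2-|\bm\beta|}$ with loss $l'=\max\{0,l-|\bm\beta|\}$; since $|\bm\beta|+l'=\max\{|\bm\beta|,l\}\le 2$ always, your Lemma~\ref{Lem6.4} fallback is unnecessary. Finally, you rewrite the commutator through the lower weights $m_{\bm\gamma}\Phi$, exactly as the paper does in the $r=0$, $|\bm\beta|=2$ case, and conclude by Lemma~\ref{Lem6.1}(iii). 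One small slip: $m_{\bm\beta}R$ is only in $H^{s+2}$, not $H^{s+2+|\bm\beta|}$, but $H^{s+2}\subset H^{s+r+|\bm\beta|}$ is all you actually use.
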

\begin{proof}[Proof of Lemma~\ref{Lem6.6}]
    Define $\mathcal{T}_{\star}=\mathcal{T}+1$. The proof proceeds in two steps.
    
    \textbf{Step 1.} $\Phi \in H^{s+2+r}$.
        We argue by a bootstrap argument. By Lemma~\ref{Lem6.2}, for $-1\leq t+r\leq s+r$, taking the $H^{t+r}$ norm on equation 
        $
        \mathcal{T}_{\star}\Phi = R - \mathcal{V}\Phi
        $
        gives
        \begin{equation}\label{eq-Lem6.6-01}
        \begin{aligned}
          \|\Phi\|_{H^{t+r+2}} \lesssim \|R\|_{H^{t+r}}+\|V\Phi\|_{H^{t+r}}
          \lesssim \|R\|_{H^{t+r}} + \|\Phi\|_{H^{t+r+l}}.
        \end{aligned}
        \end{equation}
        This estimate implies the bootstrapping implication
        $
        \Phi \in H^{t+r+l} \Longrightarrow \Phi\in H^{t+r+2}
        $.
        Starting from $\psi \in H^{1}$, a finite iteration of this argument lifts the regularity to $\Phi \in H^{s+2+r}$.
        
    \textbf{Step 2.} $\Phi \in PSH^{s+2+r,2-r}$. We first claim that for every $r'\leq 2-r$
        \begin{equation}\label{eq-Lem6.6-02}
        \bigl(\mathcal{T}_{\star}+\mathcal{V}\bigr)(m_{\bm \alpha}\Phi) \in H^{s+r+|\bm \alpha|},\ \forall\, |\bm \alpha|\leq r' 
        \quad \Longrightarrow \quad \Phi\in PSH^{s+2+r,r'}.
        \end{equation}To prove this claim, we observe from the diagram \eqref{eq-sec6.1-diag} that $m_{\bm \alpha}V \in PSH^{s+|\bm \alpha|,\,2-|\bm \alpha|}$. Then, by taking $s' = s+|\bm \alpha|$, $t' = s'$ and $l' = \max\{0,l-|\bm \alpha|\}$ in Lemma~\ref{Lem6.2}, we have
        \begin{equation}\label{eq-Lem6.6-03}
        \|m_{\bm \alpha}V\Phi\|_{H^{s+r+|\bm \alpha|}}
        \lesssim
        \|m_{\bm \alpha}V\|_{PSH^{s+|\bm \alpha|,\,2-|\bm \alpha|}}
        \|\Phi\|_{H^{s+r+|\bm \alpha|+l'}}.
        \end{equation}
        Since
        \[
        s+r+|\bm \alpha|+l'
        =
        s+r+\max\{|\bm \alpha|,l\}
        \le
        s+r+2,
        \]
        and $\Phi\in H^{s+r+2}$, we conclude from \eqref{eq-Lem6.6-03} that $m_{\bm \alpha}V\Phi \in H^{s+r+|\bm \alpha|}$.
        Then, by the left-hand side of \eqref{eq-Lem6.6-02}, we obtain
        $m_{\bm \alpha}\Phi \in H^{s+r+|\bm \alpha|+2}$ for every $|\bm \alpha|\leq r'$, which means $\Phi \in PSH^{s+2+r,r'}$ by Lemma~\ref{Lem6.1} (iii).
    
      Now we turn to estimate the left-hand side of \eqref{eq-Lem6.6-02}. A direct calculation by expanding $\Delta (m_{\bm \alpha}\Phi)$ yields
        \begin{equation}\label{eq-Lem6.6-04}
            \bigl(\mathcal{T}_{\star}+\mathcal{V}\bigr)(m_{\bm \alpha}\Phi) =m_{\bm \alpha}R + \Phi \mathcal{T}m_{\bm \alpha} -            2\sum_{|\bm \beta|=1}\mathcal{D}^{\bm \beta}[m_{\bm \alpha}]\mathcal{D}^{\bm \beta}[\Phi].
        \end{equation}
        Since $m_{\bm \alpha}$ is $C^{\infty}$ smooth and $\Phi\in H^{s+2+r}$, we have
        \begin{equation}\label{eq-Lem6.6-05}
            m_{\bm \alpha} R \in H^{s+2},\quad \Phi \mathcal{T}_{\star}m_{\bm \alpha} \in H^{s+2+r},\quad \mathcal{D}^{\bm \beta}[m_{\bm \alpha}]\mathcal{D}^{\bm \beta}[\Phi]\in H^{s+1+r}.
        \end{equation}
       Combining \eqref{eq-Lem6.6-04} and \eqref{eq-Lem6.6-05}, we establish the left-hand side of \eqref{eq-Lem6.6-02} for $r,r'$ satisfying
        \[
        s+r+r' \le \min\{s+2,\; s+2+r,\; s+1+r\}
        \quad \Longleftrightarrow \quad
        r' \le \min\{2-r,\,1\}.
        \]
        This implies that $\Phi \in PSH^{s+2+r,r'}$ for $r' \le \min\{2-r,1\}$. The only remaining case for proving $\Phi \in PSH^{s+2+r,2-r}$ is $r=0$.
        In this case, it suffices to show the cross term
        $\mathcal{D}^{\bm \beta}[m_{\bm \alpha}]\mathcal{D}^{\bm \beta}[\Phi] \in H^{s+2}.$
        Write $\bm \alpha = \bm \alpha_1 +\bm \alpha_2$ with $|\bm \alpha_1|=|\bm \alpha_2|=1$. Then 
        \[
        \mathcal{D}^{\bm \beta}[m_{\bm \alpha}]
        \mathcal{D}^{\bm \beta}[\Phi]
        = 
        \mathcal{D}^{\bm \beta}[m_{\bm \alpha_2}]
        \mathcal{D}^{\bm \beta}[m_{\bm \alpha_1}\Phi]
        +
        \mathcal{D}^{\bm \beta}[m_{\bm \alpha_1}]
        \mathcal{D}^{\bm \beta}[m_{\bm \alpha_2}\Phi]
        -2
        \Phi\mathcal{D}^{\bm \beta}[m_{\bm \alpha_2}]
        \mathcal{D}^{\bm \beta}[m_{\bm \alpha_1}].\]
        Each term on the right‑hand side lies in  $H^{s+2}$ owing to $m_{\bm \alpha_i}\Phi\in H^{s+3}$ (by the previously established $\Phi \in PSH^{s+2,1}$ in the case $r=0,r'=1$). This completes the proof of  Lemma~\ref{Lem6.6}.
\end{proof}

\subsection{Proof of \texorpdfstring{Lemma~\ref{Lem2.1}}{Theorem X}}
\label{subsec:AR3}
This section is devoted to the proof of Lemma~\ref{Lem2.1}.
We denote $\mathcal{D}_{p}^{\bm \alpha} := \mathcal{D}^{\bm \alpha}_{\bm x_p}$ for the $p$-th singular point $\bm x_p$ in Assumption~\ref{Asp::ARFSM}.  By  Lemmas~\ref{Lem6.5} and \ref{Lem6.6}, There exists 
\begin{equation}\label{Eq-6.3-01}
\Phi_{p,\bm \alpha}\in PSH^{s+2+|\bm \alpha|,2-|\bm \alpha|}
\quad \text{such that} \quad
\left\{
\begin{aligned}
(\mathcal{T}+\mathcal{V}_p)\Phi_{p,\bm \alpha}
&=
R_{p,\bm \alpha}\in H^{s+2},\\
\mathcal{D}_{p}^{\bm \alpha'}[\Phi_{p,\bm \alpha}]
&=
\delta_{\bm \alpha',\bm \alpha},
\quad
\forall\, |\bm \alpha'|\le |\bm \alpha|.
\end{aligned}
\right.
\end{equation}

\begin{proof}[Proof of Lemma~\ref{Lem2.1}] 
We define $\psi^{\geq 1}: = \psi^{(1)}+\psi^{\geq 2}.$
For $r=1,2$, substituting the expansion $\psi = \sum_{0\leq j<r}^{}\psi^{(j)} +\psi^{\geq r},$ and \eqref{Eq-2.2-01} into the eigenvalue equation $(H-\lambda)\psi = 0$ yields
\begin{equation}\label{Eq-lem2.1-03}
    0 = \mathcal{T}\psi^{\geq r} + \sum_{p=1}^{n_p} \mathcal{V}_p\Bigl(\psi - \sum_{|\bm \alpha|<r}\beta_{p,\bm \alpha}\Phi_{p,\bm \alpha}\Bigr) + R(\lambda),
\end{equation}
with the residual
\begin{equation}\label{Eq-lem2.1-04}
    R(\lambda) = \sum_{p=1}^{n_p}\sum_{|\bm \alpha|<r}\beta_{p,\bm \alpha}R_{p,\bm \alpha} + (\mathcal{V}_0 - \lambda)\psi.
\end{equation}

    By Lemma~\ref{Lem5.4}, we have $\|\psi\|_{H^{s+2}}\lesssim\nm{\psi}_{L^2}$. 
    The proof  proceeds in two steps of bootstrapping via \eqref{Eq-lem2.1-03}.
    
     \textbf{Step 1.} $r=1\Rightarrow \psi^{\geq 1}\in H^{s+3}$. By the Sobolev embedding $H^{s+2}\hookrightarrow L^{\infty}$, the coefficients $\beta_{p,\bm 0}= \mathcal{D}_{p}^{\bm 0}[\psi]$ are bounded by
    \begin{equation}\label{Eq-lem2.1-05}
        \quad |\beta_{p,\bm 0}| \leq \nm{\psi}_{L^{\infty}}\lesssim\|\psi\|_{H^{s+2}}\lesssim \|\psi\|_{L^2}.
    \end{equation}
    Since $\Phi_{p,0}\in PSH^{s+2,2}\hookrightarrow H^{s+2}$, we use \eqref{Eq-lem2.1-05} to obtain 
    \begin{equation}\label{Eq-lem2.1-06}
       \| \psi^{(0)} \|_{H^{s+2}} = \Bigl\| \sum_{p=1}^{n_p}\beta_{p,\bm 0}\Phi_{p,\bm 0}\Bigr\|_{H^{s+2}} \lesssim \|\psi\|_{L^2},
    \end{equation}
    where the last inequality follows from $\Phi_{p,\bm \alpha}\in PSH^{s+2+|\bm \alpha|,2-|\bm \alpha|}$ that only depends on $\mathcal{V}_p$. Consequently,
    \[
    \|\psi^{\geq 1}\|_{H^{s+2}}=\|\psi-\psi^{(0)}\|_{H^{s+2}}\lesssim \|\psi\|_{L^2},
    \]
    which serves as the starting point of the bootstrap argument. 
    
    Let $t$ satisfies $s\leq t+1 \leq s+1$. Taking $H^{t+1}$ norm on both sides of \eqref{Eq-lem2.1-03} gives
    \begin{equation}\label{Eq-lem2.1-07}
        \| \psi^{\geq 1} \|_{H^{t+3}} \lesssim \sum_{p=1}^{n_p}\Bigl\|\mathcal{V}_p\bigl(\psi-\beta_{p,\bm 0}\Phi_{p,\bm 0}\bigr)\Bigr\|_{H^{t+1}}+\|R(\lambda)\|_{H^{t+1}}.
    \end{equation}
    The residual \eqref{Eq-lem2.1-04} is bounded using \eqref{Eq-lem2.1-05} and $H^{s+2}\times H^{t+1}\hookrightarrow H^{t+1}$,
    \begin{equation}\label{Eq-lem2.1-08}
        \|R(\lambda)\|_{H^{t+1}} \lesssim \bigl(\lambda+\|V_0\|_{H^{s+2}}\bigr)\|\psi\|_{H^{t+1}} +\sum_{p=1}^{n_p}|\beta_{p,\bm 0}|\|R_{p,\bm 0}\|_{H^{t+1}}\lesssim \|\psi\|_{L^2}.
    \end{equation}
    To bound the first term, we localize the problem around each singular point. Define the periodic neighborhood of $\bm x_p$ by
    \begin{equation}\label{Eq-lem2.1-09}
    B_{p}:=\bigl\{\bm x \in \Omega :|\bm x-\bm x_p+\bm n|<r_m/2 \text{ for some } \bm n\in \mathbb{Z}^d\bigr\},
    \end{equation}
    where $r_m = \min_{p\neq q}|\bm x_p - \bm x_q|$ is the minimal distance between distinct singular points. Taking $\delta = r_m/2$ in Lemma~\ref{Lem6.4} , we obtain
    \[
        V_p = V_p^{\mathrm{L}}+V^{\mathrm{G}}_p,\quad 
        \Phi_{p,\bm 0} = \Phi_{p,\bm 0}^{\mathrm{L}}+\Phi^{\mathrm{G}}_{p,\bm 0},
    \]
    where $V^{\mathrm{L}}_p$, $\Phi^{\mathrm{L}}_{p,\bm 0}$ are supported in $B_p$, and $V_p^{\mathrm{G}}\in H^{s+2}$, $\Phi_{p,\bm 0}^{\mathrm{G}}\in H^{s+4}$. Note that $B_p\cap B_q = \emptyset, \forall\, p\neq q$, we have  
    \[
        \psi(\bm x)-\beta_{p,\bm 0}\Phi_{p,\bm 0}(\bm x) = \psi^{\geq 1}(\bm x) +\sum_{q\neq p} \beta_{q,\bm 0}\Phi_{q,\bm 0}^{\mathrm{G}}(\bm x), \quad \forall\, \bm x\in B_p.
    \]
    Note that $\mathcal{D}_{p}^{\bm 0}[\psi - \beta_{p,\bm 0}\Phi_{p,\bm 0}]=0$, we apply Lemma~\ref{Lem6.2} with $r=1$ for $V^{\mathrm{L}}_p$ term 
    \begin{equation}\label{Eq-lem2.1-10}
    \begin{aligned}
        \Bnm{\mathcal{V}_p^{\mathrm{L}}(\psi - \beta_{p,\bm 0}\Phi_{p,\bm 0})}_{H^{t+1}} 
        &=\Bnm{\mathcal{V}_p^{\mathrm{L}}(\psi^{\ge 1} +\sum_{q\neq p} \beta_{q,\bm 0}\Phi^{G}_{q,\bm 0})}_{H^{t+1}}\\
        &\lesssim \|V^{\mathrm{L}}_p\|_{PSH^{s,1}}\bnm{\psi^{\ge 1} +\sum_{q\neq p} \beta_{q,\bm 0}\Phi^{G}_{q,\bm 0}}_{H^{t+1+l}}\\
        &\lesssim 
        \|\psi^{\ge 1}\|_{H^{t+1+l}} + \|\psi\|_{L^2}.
    \end{aligned}
    \end{equation}
    The $V^{G}_p$ term is bounded using Lemma~\ref{Lem5.2} with $s' = s+1$, $t'=t+1$ and $l' = \max\{0,l-1\}$
    \begin{equation}\label{Eq-lem2.1-11}
        \Bigl\|\mathcal{V}_p^{G}\bigl(\psi-\beta_{p,\bm 0}\Phi_{p,\bm 0}\bigr)\Bigr\|_{H^{t+1}}
        \lesssim \bnm{V_{p}^{\mathrm{G}}}_{H^{s+1}}\bnm{\psi-\beta_{p,\bm 0}\Phi_{p,0}}_{H^{t+1+l'}}\lesssim \|\psi\|_{L^2},
    \end{equation}
    where the last inequality follows from $t+1+l' = t+\max\{1,l\}\leq s+2$. Combining \eqref{Eq-lem2.1-10} and \eqref{Eq-lem2.1-11} gives
   \begin{equation}\label{Eq-lem2.1-12}  
        \Bigl\|\mathcal{V}_p\bigl(\psi-\beta_{p,\bm 0}\Phi_{p,\bm 0}\bigr)\Bigr\|_{H^{t+1}}
        \lesssim\|\psi\|_{L^2}+\|\psi^{\geq 1}\|_{H^{t+l+1}}.
    \end{equation}
    Substituting \eqref{Eq-lem2.1-08} and \eqref{Eq-lem2.1-12} into \eqref{Eq-lem2.1-07} yields
    \begin{equation}
        \|\psi^{\geq 1}\|_{H^{t+3}} \lesssim \|\psi^{\geq 1}\|_{H^{t+l+1}} + \|\psi\|_{L^2}.
    \end{equation}
    This gives the bootstrapping implication 
    $
        \psi^{\geq 1} \in H^{t+1+l} \Longrightarrow \psi^{\geq 1} \in H^{t+3}.
    $
    Starting from $\psi^{\geq 1}\in H^{s+2}$, a finite iteration of this argument yields
    \begin{equation}
        \|\psi^{\geq 1}\|_{H^{s+3}} \lesssim  \|\psi^{\geq 1}\|_{H^{s+2}}+\|\psi\|_{L^2}\lesssim
        \|\psi\|_{L^2}.
    \end{equation}
   
   \textbf{Step 2.} $r=2\Rightarrow \psi^{\geq 2}\in H^{s+3}$. 
    The proof follows the same strategy as in \textbf{Step 1}; we therefore only outline the main steps below and defer the details to Appendix~\ref{proofthmAR1},
    \[
    \psi^{\ge 1} \in H^{s+3} 
    \overset{\mathrm{embedding}}{\xLongrightarrow{\qquad\quad}}
    \beta_{p,\bm \alpha}<+\infty \Longrightarrow\psi^{\ge 2}\in H^{s+3} 
    \overset{\mathrm{bootstrap}}{\xLongrightarrow{\qquad\quad}}
    \psi^{\ge 2} \in H^{s+4}.
    \]
\end{proof}

\subsection{Proof of \texorpdfstring{Theorem~\ref{Thm03}}{Theorem Y}} 
\label{subsec:AR4}

\begin{proof}[Proof of Theorem~\ref{Thm03}]
    Applying the triangle inequality
    \begin{equation}\label{Eq-thm03-03}
         \|\mathcal{A}_N \psi^N-\psi\|_{H^{1}}\lesssim  
         \big\|\mathcal{A}_N \big(\psi^N-\mathcal{P}_N\psi\big)\big\|_{H^{1}}+ \|\mathcal{A}_N \mathcal{P}_N\psi-\psi\|_{H^{1}}.
    \end{equation}
    The proof process in three steps.
    
    \textbf{Step 1. Stability of the linear system.}
   In the AR post-processing, one needs to solve a linear system to recover the coefficients $\beta_{p,\bm \alpha}$ in \eqref{Eq-3.0-02} from the given $\psi^N$ and $\Phi_{p,\bm \alpha}$. It is therefore necessary to analyze the stability of this system.

    For each singular point $p$ and multi-index $|\bm \alpha|\leq 1$, define the scaled quantities
    \begin{equation}\label{eq-thm2-SCALE}
    \hat{d}_{p,\bm \alpha}
    :=
    N^{-|\bm \alpha|}
    \mathcal{D}^{\bm \alpha}_{p}[\psi^N],
    \qquad
    \hat{\beta}_{p,\bm \alpha}
    :=
    N^{-|\bm \alpha|}
    \beta_{p,\bm \alpha}.
    \end{equation}
    Under this notation, the linear system \eqref{Eq-3.0-02} becomes
    \[
    \hat\Phi \bm{\hat{\beta}}
    =
    \bm{\hat{d}},
    \]
    where $\bm{\hat{\beta}},\bm{\hat{d}}$ are vectors with entries $\hat{\beta}_{p,\bm \alpha}$ and $\hat{d}_{p,\bm \alpha}$, respectively. The coefficient matrix $\Phi$ has entries (where we write $\mathcal{D}_p^{\bm \alpha }:= D_{\bm x_p}^{\bm \alpha}$)
    \begin{equation}
        \hat\Phi[p,\bm \alpha';q,\bm \alpha]:= \left\{
        \begin{aligned}
           &-N^{|\bm \alpha|-|\bm \alpha'|}\mathcal{D}^{\bm \alpha'}_p\bigl[\mathcal{P}_N^\perp\Phi_{q,\bm \alpha}\bigr],&& p\neq q,\\
           &N^{|\bm \alpha|-|\bm \alpha'|}\mathcal{D}^{\bm \alpha'}_p\bigl[\mathcal{P}_N\Phi_{p,\bm \alpha}\bigr],&& p= q.\\
        \end{aligned}
        \right.
    \end{equation}
    Our goal is to show that the matrix $\hat\Phi$ is diagonally dominant for all sufficiently large $N$. Specifically, its off‑diagonal entries are $o(1)$ while the diagonal entries are $1+o(1)$ as $N\to \infty$. To this end, We first collect three estimates that will be used repeatedly.
    Remind that by \eqref{Eq-6.3-01}, we have $\Phi_{q,\bm \alpha}\in PSH^{s+2+|\bm \alpha|,2-|\bm \alpha|}$.
    
    \textbf{Estimate 1.}($|\bm \alpha'|\leq|\bm \alpha|$, arbitrary $p,q$).
    Since the index $s+2+|\bm \alpha| > d/2 + |\bm \alpha'|$, applying  Lemma~\ref{Lem5.5} (ii) yields
    \begin{equation}
       N^{|\bm \alpha|-|\bm \alpha'|}\Bigl| \mathcal{D}^{\bm \alpha'}_p\big[\mathcal{P}_N^\perp\Phi_{q,\bm \alpha}\big]\Bigr|\lesssim N^{d/2-s-2}\|\Phi_{q,\bm \alpha}\|_{H^{s+2+|\bm \alpha|}}=o(1);
    \end{equation}

    \textbf{Estimate 2.}($|\bm \alpha'| > |\bm \alpha|$, arbitrary $p,q$). Applying Lemma~\ref{Lem5.5}  (iii)  yields
    \begin{equation}\label{Eq-thm03-07}
    N^{|\bm \alpha|-|\bm \alpha'|}\Bigl|\mathcal{D}^{\bm \alpha'}_p[\mathcal{P}_N\Phi_{q,\bm \alpha}]\Bigr|\lesssim N^{\max\{-(|\bm \alpha'|-|\bm \alpha|)^{\m},d/2-s-2\}} \|\Phi_{q,\bm \alpha}\|_{H^{s+2+|\bm \alpha|}} =o(1);
    \end{equation}

    \textbf{Estimate 3.}($|\bm \alpha'| > |\bm \alpha|$, $p\neq q$). 
    By Lemma~\ref{Lem6.4}, we have the decomposition $\Phi_{q,\bm \alpha}=  \Phi_{q,\bm \alpha}^{\mathrm{L}}+ \Phi_{q,\bm \alpha}^{\mathrm{G}}$, where 
    $\Phi_{q,\bm \alpha}^{\mathrm{L}}$ is supported in $B_q$ (see \eqref{Eq-lem2.1-09}), and  $\Phi_{q,\bm \alpha}^{\mathrm{G}}\in H^{s+4}$. By $H^{s+4}\hookrightarrow W^{|\bm \alpha'|,\infty}$, we have 
    \begin{equation}\label{Eq-thm03-08}
    N^{|\bm \alpha|-|\bm \alpha'|}\Bigl|\mathcal{D}^{\bm \alpha'}_p[\Phi_{q,\bm \alpha}]\Bigr|\lesssim N^{|\bm \alpha|-|\bm \alpha'|}\|\Phi^{\mathrm{G}}_{q,\bm \alpha}\|_{H^{s+4}}= o(1).
    \end{equation}

    With the above estimates at our disposal, we now examine the four possible index configurations.
    
    \textbf{Case 1.} $|\bm \alpha'| \le |\bm \alpha|$ and $p\neq q$. By  \textbf{Estimate 1}, we have
    \[
        \hat\Phi[p,\bm \alpha';q,\bm \alpha]=  -N^{|\bm \alpha|-|\bm \alpha'|}\mathcal{D}^{\bm \alpha'}_p[\mathcal{P}^{\perp}_N\Phi_{q,\bm \alpha}] = o(1);
    \]
    
    \textbf{Case 2.} $|\bm \alpha'| > |\bm \alpha|$ and $p\neq q$. Combining  \textbf{Estimates 2} and \textbf{3} yields
    \[
        \hat\Phi[p,\bm \alpha';q,\bm \alpha]\leq  N^{|\bm \alpha|-|\bm \alpha'|}\left(\Bigl|\mathcal{D}^{\bm \alpha'}_p[\mathcal{P}_N\Phi_{q,\bm \alpha}]\Bigr|+\Bigl|\mathcal{D}^{\bm \alpha'}_p[\Phi_{q,\bm \alpha}]\Bigr|\right) = o(1);
    \]
    
    \textbf{Case 3.} $|\bm \alpha'| > |\bm \alpha|$ and $p = q$. From  \textbf{Estimate 2}, we obtain
    \[
        \hat\Phi[p,\bm \alpha';q,\bm \alpha] =  N^{|\bm \alpha|-|\bm \alpha'|}\mathcal{D}^{\bm \alpha'}_p[\mathcal{P}_N\Phi_{q,\bm \alpha}] = o(1);
    \]
    
    \textbf{Case 4.} $|\bm \alpha'| \leq |\bm \alpha|$ and $p = q$. 
    Using $\mathcal{D}^{\bm \alpha'}_{\bm p}[\Phi_{p,\bm \alpha}]=\delta_{\bm \alpha',\bm \alpha}$ and  \textbf{Estimate 1} gives
    \[
        \hat\Phi[p,\bm \alpha';q,\bm \alpha] =  N^{|\bm \alpha|-|\bm \alpha'|}\left(\delta_{\bm \alpha',\bm \alpha}-\mathcal{D}^{\bm \alpha'}_p[\mathcal{P}^\perp_N\Phi_{q,\bm \alpha}]\right) =\delta_{\bm \alpha',\bm \alpha} + o(1).
    \]

    In summary, we have estimated that
    \[
        \hat\Phi[p,\bm \alpha';q,\bm \alpha] = \delta_{p,q}\delta_{\bm \alpha,\bm \alpha'}+o(1).
    \]
    Consequently, there exists $N_0$ such that for all $N\geq N_0$,  the solution of $\Phi \bm{\hat{x}} =\bm{\hat{b}} $ satisfies the uniform bound $\| \bm{\hat{x}}\|_{\infty} \leq 2\bigl\| \bm{\hat{b}}\|_{\infty}$ in vector norm. With this stability result in hand, we now return to estimate the two terms in \eqref{Eq-thm03-03}.
    
    \textbf{Step 2. Stability of AR operator.} Let $\epsilon^N:=\psi^N-\mathcal{P}_N\psi$. We shall show that $\mathcal{A}_N$ is bounded uniformly in $N$, i.e. for any $\epsilon^N\in \mathscr{X}_N$ 
    \begin{equation}\label{Eq-thm03-09}
        \big\|\mathcal{A}_N \epsilon^N\big\|_{H^{1}}\lesssim \big\| \epsilon^N\big\|_{H^{1}}.
    \end{equation}
    From the definition \eqref{Eq-3.0-01} of $\mathcal{A}_N$, 
    \begin{equation}\label{Eq-thm03-10}
        \mathcal{A}_N \epsilon^N  = \epsilon^N + \mathcal{P}_N^\perp\sum_{p=1}^{n_p}\sum_{|\alpha|\leq 1}\epsilon^N_{p,\bm \alpha}\Phi_{p,\bm \alpha},
    \end{equation}
    where the coefficients $\epsilon_{p,\bm \alpha}^N$ are determined by the scaled linear system
    \[
    \hat{\Phi} \bm{\hat{x}}= \bm{\hat{b}},\quad \bm{\hat{x}}[p,\bm \alpha]=N^{-|\bm \alpha|}\epsilon_{p,\bm\alpha},\quad \bm{\hat{b}}[p,\bm \alpha]=N^{-|\bm \alpha|}\mathcal{D}_{p}^{\bm \alpha}[\epsilon^N].
    \]
    Owing to the stability of scaled linear system, for any for all sufficiently large $N$, we have $\| \bm{\hat{x}}\|_{\infty} \leq 2\bigl\| \bm{\hat{b}}\|_{\infty}$. Hence  
    \begin{equation}\label{Eq-thm03-11}
        \epsilon_{p,\bm \alpha}^N\leq 2N^{|\bm \alpha|}\max_{q,\bm \alpha'} N^{-|\bm \alpha'|}\mathcal{D}_q^{\bm \alpha'}[\epsilon^N].
    \end{equation}
    Now $\epsilon^N = \mathcal{P}_N\epsilon^N$. Choose $m = d/2-s-1$, applying  Lemma~\ref{Lem5.5} (iii) gives
    \begin{equation}\label{Eq-thm03-12}
        N^{-|\bm \alpha'|}\max_{q} \mathcal{D}_q^{\bm \alpha'}[\epsilon^N] \leq N^{-|\bm \alpha'|}\bigl\|\epsilon^N\bigr\|_{W^{|\bm \alpha'|,\infty}}\lesssim N^{s+1}\bigl\|\epsilon^N\bigr\|_{H^m}.
    \end{equation}
    Since $m < 1$ we have $\bigl\|\epsilon^N\bigr\|_{H^m}\le\bigl\|\epsilon^N\bigr\|_{H^1}$. Substituting \eqref{Eq-thm03-12} into \eqref{Eq-thm03-11} yields
    \begin{equation}\label{Eq-thm03-13}
        |\epsilon_{p,\bm \alpha}| \lesssim  N^{|\bm \alpha|+s+1}\bigl\|\epsilon^N\bigr\|_{H^m}\leq N^{|\bm \alpha|+s+1}\bigl\|\epsilon^N\bigr\|_{H^1}.
    \end{equation}
    On the other hand, applying Lemma~\ref{Lem5.5} (ii),
    \begin{equation}\label{Eq-thm03-14}
        \bigl\|\mathcal{P}^\perp_N \Phi_{p,\bm \alpha}\bigr\|_{H^1} \lesssim N^{-s-1-|\bm \alpha|}\bigl\| \Phi_{p,\bm \alpha}\bigr\|_{H^{s+2+|\bm \alpha|}}.
    \end{equation}
    Inserting \eqref{Eq-thm03-13} and \eqref{Eq-thm03-14} into \eqref{Eq-thm03-10}, we have 
    \[
    \begin{aligned}
        \bigl\|\mathcal{A}_N\epsilon^N\bigr\|_{H^1} 
        &\leq \bigl\|\epsilon^N\bigr\|_{H^1} + \sum_{p=1}^{n_p}\sum_{|\alpha|\leq  1}|\epsilon^N_{p,\bm \alpha}|\bigl\|\mathcal{P}_N^\perp\Phi_{p,\bm \alpha}\bigr\|_{H^1}\\
        &\lesssim \bigl\|\epsilon^N\bigr\|_{H^1}\left(1 + \sum_{p=1}^{n_p}\sum_{|\alpha|\leq 1} \bigl\|\Phi_{p,\bm \alpha}\bigr\|_{H^{s+2+|\bm \alpha|}}\right).
    \end{aligned}
    \]
    This proves the uniform bound \eqref{Eq-thm03-09}.
    
    \textbf{Step 3. Approximation estimate.} We now prove that for the exact eigenfunction $\psi$, the approximation error of AR operator satisfies 
    \begin{equation}\label{Eq-thm03-15}
        \|\mathcal{A}_N \mathcal{P}_N\psi-\psi\|_{H^{1}}\lesssim N^{-(s+3)}\|\psi\|_{L^2}.
    \end{equation}
    From the regularity result in Lemma~\ref{Lem2.1}, we have 
    \begin{equation}\label{Eq-thm03-16}
    \psi = \psi^{\geq 2}+ \sum_{p=1}^{n_p}\sum_{|\bm \alpha|\leq 1}\beta_{p,\bm \alpha}\Phi_{p,\bm \alpha},    
    \qquad
    \| \psi^{\geq 2}\|_{H^{s+4}} \lesssim \|\psi\|_{L^2},
    \end{equation}
    where the coefficients $\beta_{p,\bm \alpha}$ satisfy
    \begin{equation}\label{Eq-thm03-17}
        \mathcal{D}_p^{\bm \alpha'}\bigl[\psi - \sum_{|\bm \alpha'|\leq 1} \beta_{q,\bm \alpha}\Phi_{p,\bm \alpha}\bigr] = 0.
    \end{equation}
    By construction \eqref{Eq-3.0-01} of $\mathcal{A}_N$, we have
    \begin{equation}\label{Eq-thm03-18}
        \mathcal{A}_N \mathcal{P}_N\psi  = \mathcal{P}_N\psi + \mathcal{P}_N^\perp\sum_{p=1}^{n_p}\sum_{|\alpha|\leq1}\beta^N_{p,\bm \alpha}\Phi_{p,\bm \alpha},
    \end{equation}
    where the coefficients $\beta_{p,\bm \alpha}^N$ satisfy
    \begin{equation}\label{Eq-thm03-19}
        \mathcal{D}_p^{\bm \alpha'}\bigl[\mathcal{A}_N\mathcal{P}_N\psi - \sum_{|\bm \alpha'|\leq 1} \beta_{q,\bm \alpha}\Phi_{p,\bm \alpha}\bigr] = 0.
    \end{equation}
    Subtracting \eqref{Eq-thm03-16} from \eqref{Eq-thm03-18} gives
    \begin{equation}\label{Eq-thm03-20}
        \mathcal{A}_N \mathcal{P}_N\psi-\psi = \mathcal{P}_N^\perp\psi^{\geq 2}+\sum_{p=1}^{n_p}\sum_{|\bm \alpha|\leq 1}\epsilon^N_{p,\bm \alpha}\mathcal{P}_N^\perp\Phi_{p,\bm \alpha},
    \end{equation}
    where $\epsilon_{p,\bm \alpha}^N = \beta_{p,\bm \alpha}^N -\beta_{p,\bm \alpha}$. 
    Subtracting \eqref{Eq-thm03-17} from \eqref{Eq-thm03-19} gives 
    \begin{equation}\label{Eq-thm03-21}
        {D}^{\bm \alpha'}_{p}\Bigl[\mathcal{P}_N^\perp\psi^{\geq 2}-\sum_{|\bm \alpha|\le 1} \epsilon^N_{p,\bm \alpha}\mathcal{P}_N\Phi_{p,\bm\alpha}+\sum_{q\neq p}\sum_{|\bm \alpha|\le 1}\epsilon^N_{q,\bm \alpha}\mathcal{P}_N^\perp\Phi_{q,\bm\alpha}\Bigr] = 0,
    \end{equation}
    which is a linear system for the vector $\hat{x}$ with entries $N^{-|\bm \alpha|}\epsilon^N_{p,\bm \alpha}$, the right-hand side vector $\hat{b}$ with entries $N^{-|\bm \alpha'|}\mathcal{D}^{\bm \alpha'}_p[\mathcal{P}_N^\perp \psi^{\geq 2}]$, and the coefficient matrix $\hat{\Phi}$. Due to the stability of this linear system, we have for sufficiently large $N$ that
    \begin{equation}
        |\epsilon_{p,\bm \alpha}^N|\lesssim N^{|\bm \alpha|}\max_{q,\bm \alpha'} N^{-|\bm \alpha'|}\mathcal{D}_q^{\bm \alpha'}[\mathcal{P}^\perp_N\psi^{\geq 2}].
    \end{equation}
    Since $s+4>d/2+|\bm \alpha'|$,  we obtain by  Lemma~\ref{Lem5.5} (ii),
    \begin{equation}\label{Eq-thm03-23}
        |\epsilon_{p,\bm \alpha}^N|
        \lesssim N^{|\bm \alpha|-|\bm \alpha'|}\bigl\|\mathcal{P}^\perp_N\psi^{\geq 2}\bigr\|_{W^{|\bm \alpha'|,\infty}}
        \lesssim N^{|\bm \alpha|+d/2}N^{-s-4}\bigl\|\psi^{\geq 2}\bigr\|_{H^{s+4}}.
    \end{equation}
    Now we return to the expression \eqref{Eq-thm03-20}. By \eqref{Eq-thm03-23} and Lemma~\ref{Lem5.5} (i), we have 
    \begin{equation}
    \begin{aligned}
        \bigl\|\mathcal{A}_N \mathcal{P}_N\psi-\psi\bigr\|_{H^1} 
        &\leq \bigl\|\mathcal{P}_N^\perp\psi^{\geq 2}\bigr\|_{H^1} + \sum_{p=1}^{n_p}\sum_{|\bm \alpha|\leq 1}|\epsilon^N_{p,\bm \alpha}| \bigl\|\mathcal{P}_N^\perp\Phi_{p,\bm \alpha}\bigr\|_{H^1}\\
        &\lesssim N^{-s-3}\bigl\|\psi^{\geq 2}\bigr\|_{H^{s+4}}\\
        &\quad \times\Bigl(1 + N^{d/2-s-2}\sum_{p=1}^{n_p}\sum_{|\alpha|\leq 1} \bigl\|\Phi_{p,\bm \alpha}\bigr\|_{H^{s+2+|\bm \alpha|}}\Bigr).
    \end{aligned}
    \end{equation}
    Since $s>d/2-2$, the sum over $p,\bm \alpha$ is bounded by a finite constant independent of $N$. Combining this estimate with the regularity bound \eqref{Eq-thm03-16} gives precisely \eqref{Eq-thm03-15}.

    Finally, applying the triangle inequality \eqref{Eq-thm03-03} together with the stability estimate \eqref{Eq-thm03-09} and the approximation estimate \eqref{Eq-thm03-15} yields the desired convergence bound \eqref{Eq-thm03-01}. The proof is thus completed.
\end{proof}

\section{Conclusions and discussions}
\label{sec:conclusions}
In this paper we studied FSM for Schr\"{o}dinger eigenvalue problems with potentials containing isolated point singularities. We derived an asymptotic expansions of eigenfunctions that separate a regular component from a finite number of explicitly characterized asymptotic functions associated with each singular point.
Based on this expansion, we proposed an asymptotic-recovery (AR) post-processing technique that recover the  high-frequency components from the truncated Fourier coefficients. The resulting AR-FSM significantly improves the convergence behavior of the standard Fourier spectral method. 

Rigorous error estimates are established for FSM and AR-FSM for singular potentials $V \in H^{s}$ with $s > \max\{d/2-2,-1\}$, where $d$ denotes the spatial dimension. We first prove the optimal convergence orders of  FSM, showing that it achieves order $2s+2$ for eigenvalues, order $s+1$ for eigenfunctions in the $H^1$ norm together with a super-convergence gain of order $b=\min\{(s+2-d/2)^{\m},s+1,2\}$. Building on this result, we show that the AR-FSM fully exploits this super-convergence, yielding convergence rates of order $2s+2+2b$ for eigenvalues and $s+1+b$ for eigenfunctions. Numerical experiments confirm the theoretical predictions and demonstrate that the proposed methods substantially improve the convergence rates of FSM while retaining its computational efficiency.

In the analysis, we introduce a rigorous definition of point singularities and develops a foundational framework for their study. It further establishes an asymptotic expansion of eigenfunctions consisting of a
regular component in $H^{s+4}$ together with $d+1$ asymptotic functions associated with each singular point. These contributions are of independent interest beyond the AR-FSM framework.

We do not further investigate the dependence of the error estimates on the eigenvalue index $n$. Intuitively, one should replace the regularity results used throughout the analysis by estimates of the form
$\|\psi_n\|_{H^{t}}\lesssim (\lambda_n+M)^{t/2}\|\psi_n\|_{L^2}$. 
Such estimates are of interest for two reasons. First, they provide quantitative criteria for determining how many computed eigenpairs are reliable at a given truncation parameter $N$. Second, they constitute a necessary step toward the analysis of time-dependent Schr\"odinger equation, where the solution typically involves contributions from infinitely many eigenpairs.

An interesting observation from the convergence results is that the lower bound of convergence order improves as $d$ increases. Indeed, the eigenvalue convergence order of FSM is $2s+2 \ge d-2$, since $s \ge d/2-2$. On the other hand, the computational complexity of FSM scales as $M = N^d \log N$. Consequently, the eigenvalue error measured with respect to the computational complexity scales like $ M^{-1+2/d}$,
which approaches the rate $M^{-1}$ as $d\to\infty$.
This observation raises two questions related to the curse of dimensionality: What is the minimum computational complexity $M$ required to observe the asymptotic $M^{-1}$ convergence regime, and how can the tensor-product grid be avoided in order to reduce minimum computational cost.

One direction of advance is to incorporate the asymptotic expansions Lemma~\ref{Lem2.1} directly into the discretization, leading to an enhanced Fourier spectral methods that further improve the convergence order. 
Such developments may provide a systematic framework for constructing high-accuracy spectral algorithms for Schr\"odinger equation with point-singular potential. 

Another direction is to extend the analysis of point singularities to broader classes of partial differential equations. Such extensions could provide a unified framework for treating singularities in a wide range of applications



\bmhead{Acknowledgements}
This research was supported by the National Natural Science Foundation of China (Nos. 12325112, 12288101) and  the High-performance Computing Platform of Peking University.

\begin{appendices}

\section{Proof of Lemma~\ref{Lem5.5}} \label{proofLem5.5}
\begin{proof} 
    Write $\psi$ in its Fourier series
    \begin{equation}
        \phi = \sum_{\bm k \in \mathbb{Z}^d} \hat{\phi}_{\bm k} e_{\bm k},\quad \hat{\phi}_{\bm k} := \langle e_{\bm k},\phi \rangle.
    \end{equation}

    \noindent\textbf{(i)} For $t<m$,
    \[
    \begin{aligned}
        \|\mathcal{P}_N^\perp \phi\|_{H^t}^2 
        &= \sum_{\|\bm k\|_{\infty}\ge N} \bigl(4\pi^2|\bm k|^2+1\bigr)^{t}|\hat{\phi}_{\bm k}|^2\\
        &\leq (2\pi N)^{2t-2m}\sum_{\|\bm k\|_{\infty}\ge N} \bigl(4\pi^2|\bm k|^2+1\bigr)^{m}|\hat{\phi}_{\bm k}|^2\leq\|\phi\|_{H^m}^2.
    \end{aligned}
    \]

    \noindent\textbf{(ii)}  $t\in \mathbb{N}$ and $t+d/2<m$.  Using the Cauchy–Schwarz inequality,
    \[
    \begin{aligned}
        \|\mathcal{P}_N^\perp \phi\|_{W^{t,\infty}}^2 
        &\lesssim \left(\sum_{\|\bm k\|_{\infty}\ge N} \big(4\pi|\bm k|^2+1\big)^{t/2}|\hat{\phi}_{\bm k}|\right)^2\\
        &\lesssim \sum_{\|\bm k\|_{\infty}\ge N}\bigl(4\pi^2|\bm k|^2+1\bigr)^{t-m}\sum_{\|\bm k\|_{\infty}\ge N} \bigl(4\pi^2|\bm k|^2+1\bigr)^{m}|\hat{\phi}_{\bm k}|^2 .
    \end{aligned}
    \]
    Because $2t-2m+d<0$, the first sum is bounded by $CN^{2t-2m+d}$. Therefore
    \[
        \|\mathcal{P}_N^\perp \phi\|_{W^{t,\infty}}\lesssim N^{t-m+d/2}\|\phi\|_{H^m}.
    \]

    \noindent\textbf{(iii)}  $t\in \mathbb{N}$ and $t+d/2>m$.  The same argument applied to the low‑frequency part gives
    \[
    \begin{aligned}
        \|\mathcal{P}_N \phi\|_{W^{t,\infty}} \lesssim \Bigl(\sum_{\|\bm k\|_{\infty}\le N}\bigl(4\pi^2|\bm k|^2+1\bigr)^{t-m}\Bigr)^{1/2}\|\phi\|_{H^m}.
    \end{aligned}
    \]
    For $2t-2m+d>0$, the first sum is bounded by $CN^{2t-2m+d}$. For $2t-2m+d\leq 0$, the first sum is bounded by $N^{\epsilon}$ for any $\epsilon>0$. Therefore
    \[
        \|\mathcal{P}_N \phi\|_{W^{t,\infty}}\lesssim N^{\max\{-0^{\m},t-m+d/2\}}\|\phi\|_{H^m}.
    \]
    This completes the proof of Lemma~\ref{Lem5.5}.
\end{proof}

\section{Proof of Lemma~\ref{Lem5.6}}\label{proofLem5.6}
\begin{proof}
 (i) By the Lax-Milgram theorem, it suffices to establish the coercivity of the operator $\mathcal{H}-\lambda^\star$ on $\mathscr{X}_N^\perp$. 
By Lemma~\ref{Lem5.3}, there exists $M>0$ such that
\begin{equation}\label{eq-Lem5.6-01}
0.5 \|\psi^\perp\|_{H^1}^2
\leq 
\langle \psi^\perp, (\mathcal{H}+M)\psi^\perp\rangle
=
\langle \psi^\perp, (\mathcal{H}-\lambda^\star)\psi^\perp\rangle 
+ (\lambda^\star+M)\|\psi^\perp\|_{L^2}^2.
\end{equation}
Since $\psi^\perp = \mathcal{P}_N^\perp \psi^\perp$, Lemma~\ref{Lem5.5} implies
\begin{equation}\label{eq-Lem5.6-02}
(\lambda^\star+M)\|\psi^\perp\|_{L^2}^2 
\leq 
\tfrac14 \bigl(1 + M(\pi N)^{-2}\bigr)\|\psi^\perp\|_{H^1}^2.
\end{equation}
Substituting \eqref{eq-Lem5.6-02} into \eqref{eq-Lem5.6-01}, we obtain
\begin{equation}
\langle \psi^\perp, (\mathcal{H}-\lambda^\star)\psi^\perp\rangle
\geq  \tfrac14 \bigl(1 - M(\pi N)^{-2}\bigr)\|\psi^\perp\|_{H^1}^2.
\end{equation}
This proves the coercivity of $\mathcal{H}-\lambda^\star$ on $\mathscr{X}_N^\perp$ for $N\geq N_0:= \sqrt{M}$.

    (ii) Note that the derivative of $\mathcal{H} - \mathcal{F}_N(\lambda^\star)$ is given by
    \[
    \frac{\mathrm{d}}{\mathrm{d} \lambda^\star} \bigl[\mathcal{H} - \mathcal{F}_N(\lambda^\star)\bigr] = -\mathcal{V} \left[\frac{\mathrm{d}}{\mathrm{d} \lambda^\star} \mathcal{G}_N(\lambda^\star)\right] \mathcal{V} = -\mathcal{V} \left[\mathcal{G}_N(\lambda^\star)\right]^2 \mathcal{V},
    \]
    which is bounded and non-positive. Hence, the eigenvalue $\sigma_m^N(\lambda^\star)$ is continuous and non-increasing.

    (iii) Set $\lambda^\star = \lambda_n\leq \pi^2N^2$ and let $\alpha_n=\operatorname{dim}\operatorname{ker}(\lambda-\mathcal{H})\geq 1$ denote the multiplicity of $\lambda_n$ for $\mathcal{H}$. Since $\mathcal{G}_N(\lambda^\star)$ is uni-solvent,  the construction yields a one‑to‑one correspondence between the eigenfunctions of the original problem \eqref{Eq-1.1-02} and those of the reduced problem \eqref{Eq-5.2-02}
    \begin{equation} \label{Eq-lem5.6-04}
    \mathcal{H}\psi = \lambda^\star\psi \Longleftrightarrow \mathcal{P}_N\bigl(\mathcal{H}-\mathcal{F}_N(\lambda^\star)\bigr)\psi^N = \lambda^\star\psi^N,
    \end{equation}
    where $\psi^N = \mathcal{P}_N \psi$ and $\psi  = \bigl(1- \mathcal{G}_N(\lambda^\star) \mathcal{V}\bigr)\psi^N$.  \eqref{Eq-lem5.6-04} shows that the reduced problem \eqref{Eq-5.2-03} also possesses $\lambda_n$  as an eigenvalue with the same multiplicity $\alpha_n$. Thus, it only remains to prove that $\lambda_n$ is exactly the $n$-th eigenvalue $\mathcal{H}-\mathcal{F}_N(\lambda)$, or equivalently,  \begin{equation}\label{Eq-lem5.6-05}
    \bigl|\{m\in \mathbb{N}^+:\lambda_m<\lambda_n\}\bigr| = 
    \bigl|\{m\in \mathbb{N}^+:\sigma_m^N(\lambda_n)<\lambda_n\}\bigr|.
    \end{equation}
    For each fixed $m \in \mathbb{N}^+$, consider the fixed-point equation $x-\sigma_m^N(x)=0$ on $(-\infty,\lambda_n)$. By (ii),  the function $x-\sigma_m^N(x)$ is continuous and strictly increasing in $x$, with $x-\sigma_m^N(x)\leq x-\sigma^N_m(\lambda_n) \to -\infty $ as $x \to -\infty$. Consequently, there exists a unique $x\in (-\infty,\lambda_n)$ satisfying $x = \sigma_m^N(x) $ if and only if $\sigma_m^N(\lambda_n)<\lambda_n$. Hence,
    \begin{equation}
        \bigl|\{ m\in \mathbb N^{+} :\ \exists\, x\in(-\infty,\lambda_n),\ \sigma_m^N(x)=x\}\bigr| =
        \bigl|\{ m\in \mathbb N^{+} :\ \sigma_m^N(\lambda_n)<\lambda_n\}\bigr|.
    \end{equation}
    On the other hand, the correspondence in \eqref{Eq-lem5.6-04} identifies each $x$ such that $\sigma_m^N(x) = x$ with an eigenvalue $\lambda = x$ of $\mathcal{H}$ of the same multiplicity. Looping through every $x \in (-\infty, \lambda_n)$ gives
    \begin{equation}
       \bigl|\{ m\in \mathbb N^{+} :\ \exists\, x\in(-\infty,\lambda_n),\ \sigma_m^N(x)=x\}\bigr| = \bigl|\{m\in \mathbb{N}^+:\lambda_m<\lambda_n\}\bigr|.
    \end{equation}
    Combining the last two equalities yields precisely \eqref{Eq-lem5.6-05}.
\end{proof}

\section{Proof of Proposition \ref{Prop04}}\label{proofProp04}
\begin{proof}
    By the min–max theorem,
    \begin{equation}\label{Eq-prop04-03}
    \begin{aligned}
    \lambda_{1,n}
    &= \min_{\dim W = n} \max_{u \in W} \frac{\langle u, \mathcal{S}_1 u \rangle}{\langle u, u \rangle} \\
    &\leq \max_{u \in W_n(\mathcal{S}_2)} \frac{\langle u, (\mathcal{S}_2 + \mathcal{S}_1 - \mathcal{S}_2) u \rangle}{\langle u, u \rangle} \\
    &= \lambda_{2,n} + \max_{u \in W_n(\mathcal{S}_2)} \frac{\langle u, (\mathcal{S}_1 - \mathcal{S}_2) u \rangle}{\langle u, u \rangle}.
    \end{aligned}
    \end{equation}
    By symmetry between $\mathcal{S}_1$ and $\mathcal{S}_2$, we similarly obtain
    \begin{equation}\label{Eq-prop04-04}
    \lambda_{2,n} \leq \lambda_{1,n} + \max_{u \in W_n(\mathcal{S}_1)} \frac{\langle u, (\mathcal{S}_2 - \mathcal{S}_1) u \rangle}{\langle u, u \rangle}.
    \end{equation}
    Combining \eqref{Eq-prop04-03} and \eqref{Eq-prop04-04} yields the desired \eqref{Eq-prop04-01}.

    We now proceed to prove \eqref{Eq-prop04-02}. Without loss of generality, we set $M=0$; the case $M\neq 0$ reduces to this setting by introducing $\mathcal{{S}}_1'=\mathcal{S}_1+M$ and $\mathcal{{S}}_2'=\mathcal{S}_2+M$.  Let $\Gamma$ be a circle centered at $\lambda_{1,n}$ with radius $\frac{1}{2}\gamma_{1,n}$. Then the projection operator $\mathcal{P}_{n,\mathcal{S}_1}$ can be expressed as
    \begin{equation}\label{Eq-prop04-05}
    \mathcal{P}_{n,\mathcal{S}_1} = \frac{1}{2\pi i} \int_{\Gamma} (z - \mathcal{S}_1)^{-1} \, dz.
    \end{equation}
    Since $\mathcal{S}_{2}u_{2,n} = \lambda_{2,n}u_{2,n}$ and $\lambda_{2,n}$ lies inside circle $\Gamma$, we have
    \begin{equation}\label{Eq-prop04-06}
        u_{2,n} = \frac{1}{2\pi i }\int_{\Gamma}(z-\mathcal{S}_2)^{-1}u_{2,n}\, dz.
    \end{equation}
    Combining \eqref{Eq-prop04-05} and \eqref{Eq-prop04-06}, we have
    \begin{equation}
        \begin{aligned}
            u_{2,n}-\mathcal{P}_{n,\mathcal{S}_1}u_{2,n} 
            &= \frac{1}{2\pi i}\int_{\Gamma}(z-\mathcal{S}_1)^{-1}(\mathcal{S}_{2}-\mathcal{S}_1)(z-\mathcal{S}_2)^{-1}u_{2,n}\, dz\\
            &= \frac{1}{2\pi i}\int_{\Gamma}\frac{dz}{z-\lambda_{2,n}}(z-\mathcal{S}_1)^{-1}(\mathcal{S}_2-\mathcal{S}_1)u_{2,n}.\\
        \end{aligned}
    \end{equation}
    Let $\phi = u_{2,n}-\mathcal{P}_{n,\mathcal{S}_1}u_{2,n},$ we can estimate the left-hand side of \eqref{Eq-prop04-02} as follows:
    \begin{equation}
        \begin{aligned}\label{Eq-prop04-07}
             \|\phi\| _{\mathcal{S}_1}^2
             &\leq \left(\frac{1}{2\pi}\int_{\Gamma} \frac{|dz|}{|z-\lambda_{2,n}|}\right)\sup_{z\in \Gamma}\left|\langle \frac{\mathcal{S}_1}{z-\mathcal{S}_1}\phi,\big(\mathcal{S}_2-\mathcal{S}_1\big)u_{2,n}\rangle\right|\\
             &\leq \left(\frac{1}{2\pi}\int_{\Gamma} \frac{|dz|}{|z-\lambda_{2,n}|}\right)
             \sup_{z\in \Gamma}\left\|\frac{\mathcal{S}_1}{z-\mathcal{S}_1}\phi\right\|_{\mathcal{S}_1}
             \sup_{v\in \mathscr{X}_N\setminus\{0\}}\left|\frac{\langle v,(\mathcal{S}_2-\mathcal{S}_1)u_{2,n}\rangle}{\|v\|_{\mathcal{S}_1}}\right|.
        \end{aligned}
    \end{equation}
    The first factor is bounded by
    \begin{equation}\label{Eq-prop05-09}
        \frac{1}{2\pi}\int_{\Gamma}\frac{|d z|}{|z-\lambda_{2,n}|}
        \leq \frac{1}{2\pi}\sup_{z\in \Gamma}\frac{1}{|z-\lambda_{2,n}|}\int_{\Gamma}|dz| 
        \leq \frac{2\pi \frac{1}{2}\gamma_{n,\mathcal{S}_1}}{2\pi \frac{1}{6}\gamma_{n,\mathcal{S}_1}} = 3.
    \end{equation}
    For the second factor, we have for any $z \in \Gamma$,
    \begin{equation}\label{Eq-prop04-09}
    \begin{aligned}
        \left\|\frac{\mathcal{S}_1}{z-\mathcal{S}_1}\phi\right\|_{\mathcal{S}_1} 
        &\leq  \|\phi\|_{\mathcal{S}_1}\max_{j \geq 1}\frac{\lambda_{1,j}}{|z-\lambda_{1,j}|}\\
        &\leq  \|\phi\|_{\mathcal{S}_1} \max\left\{ \frac{\lambda_{1,n}}{|z-\lambda_{1,n}|},  \max_{\lambda_{1,j}\neq \lambda_{1,n}} \left| \frac{z}{z-\lambda_{1,j}}\right|+1 \right\}\\
        &\leq  \|\phi\|_{\mathcal{S}_1} \max\left\{ \frac{\lambda_{1,n}}{\frac{1}{2}\gamma_{n,\mathcal{S}_1}},  \frac{\lambda_{1,n}+\frac{1}{2}\gamma_{n,\mathcal{S}_1}}{\frac{1}{2}\gamma_{n,\mathcal{S}_1}}+1 \right\}\\
        &\leq  \|\phi\|_{\mathcal{S}_1}(2+2\frac{\lambda_{1,n}}{\gamma_{n,\mathcal{S}_1}}). 
    \end{aligned}
    \end{equation}
    Combining \eqref{Eq-prop04-07}-\eqref{Eq-prop04-09} yields the desired \eqref{Eq-prop04-02}.
\end{proof}

\section{Proof of Lemma~\ref{Lem6.3}}\label{prooflem2}
\begin{proof}[Proof of Lemma~\ref{Lem6.3}]
Since $\mathcal{D}^{\bm \alpha}_{\bm 0}[\phi]=0$ for all $|\bm \alpha|<r$, the Taylor expansion of $\phi$ at $\bm x=0$ contains no terms of order lower than $r$. Hence, by Taylor's theorem with integral remainder,
\begin{equation}\label{eq-Lem6.3-02}
\phi(\bm x)
=
\sum_{|\bm \alpha|=r}
\bm x^{\bm \alpha}\frac{r}{\bm \alpha!}
\int_0^1
(1-t)^{r-1}
\mathcal{D}^{\bm \alpha}[\phi](t\bm x)\dt
\end{equation}
where $\bm \alpha!:= \prod_{j=1}^{d}\alpha_j!$.
This yields the decomposition
\begin{equation}\label{eq-Lem6.3-03}
    \phi(\bm x)= \sum_{|\bm \alpha|=r} \bm x^{\bm \alpha}\phi_{\bm \alpha}(\bm x), \quad \phi_{\bm \alpha}= \frac{r}{\bm \alpha!} \int_0^1 (1-t)^{r-1}\mathcal{D}^{\bm \alpha}[\phi](t\bm x)\dt.
\end{equation}
Since $\bigl\|\mathcal{D}^{\bm \alpha}[\phi]\bigr\|_{H^{m}(B_R)}\lesssim \bigl\|\phi\bigr\|_{H^{m+r}(B_R)}\lesssim \|\phi\|_{H^{m+r}}$, the problem reduces to showing that for every $f \in H^{m}$ with $m>\max \{0,d/2-1\}$,
\begin{equation}\label{eq-Lem6.3-04}
    \Bigl\|\int_{0}^1(1-t)^{r-1}f(t\bm x)\dt\Bigr\|_{H^{m}(B_R)}\lesssim \|f\|_{H^{m}(B_R)}.
\end{equation}
Define 
\begin{equation}
 g(\bm x) = \int_{0}^1(1-t)^{r-1}f(t\bm x)\dt.   
\end{equation}
To bound $\|g\|_{H^{m}(B_R)}$, we use the Gagliardo–Nirenberg semi-norm representation for the fractional Sobolev space
\begin{equation}\label{eq-Lem6.3-05}
    \|g\|_{H^{m}(B_R)} \lesssim \|g\|_{L^{2}(B_R)}+\sum_{|\bm \alpha|=\lfloor m\rfloor}|\mathcal{D}^{\alpha}g|_{H^{\theta}(B_R)},
\end{equation}
where $\theta = m-\lfloor m\rfloor$ and the semi-norm is defined by
\[
    |f|_{H^{\theta}(B_R)}^2 := 
    \int_{B_R\times B_R}\frac{\bigl|f(\bm x)-f(\bm y)\bigr|^2}{|\bm x -\bm y|^{d+2\theta}}\mathrm{d} \bm x\mathrm{d}\bm y.
\]
First, the $L^2$ norm in \eqref{eq-Lem6.3-05} is bounded by Minkowski's integral inequality
\begin{equation}\label{eq-Lem6.3-06}
    \|g\|_{L^2(B_R)}\leq\int_0^1 \|f(t\cdot)\|_{L^{2}(B_R)}\dt = \int_{0}^{1}t^{-d/2}\|f\|_{L^{2}(B_{tR})}\dt.
\end{equation}
For any $p>1$, the H\"older inequality gives
\begin{equation}\label{eq-Lem6.3-07}
    \nm{f}_{L^{2}(B_{tR})}^2 \leq \left(\int_{B_{tR}} |f|^{2p}\right)^{1/p}\bigl|B_{tR}\bigr|^{1-\frac{1}{p}}\lesssim t^{d(1-\frac{1}{p})}\|f\|_{L^{2p}(B_R)}^2,
\end{equation}
where the constant in the last inequality is independent of $t$.
Since $m>\max\{0,d/2-1\}$, one can choose a constant $p$ such that 
$d/2-m<d/(2p)<\min\{d/2,1\}$. This choice guarantees $p>1,\ -d/2p>-1$, and $H^m(B_R)\hookrightarrow L^{2p}(B_R)$. Combining these with \eqref{eq-Lem6.3-06} and \eqref{eq-Lem6.3-07} then yields
\begin{equation}\label{eq-Lem6.3-08}
\begin{aligned}
    \|g\|_{L^{2}(B_R)}\lesssim \int_0^1 t^{-d/2p}\|f\|_{L^{2p}(B_R)}\dt \lesssim \|f\|_{H^{m}(B_R)}.
\end{aligned}
\end{equation}
Next, the semi-norm in \eqref{eq-Lem6.3-05} is bounded by
the scaling property for semi-norms
\[
    |h(t\cdot)|_{H^{\theta}(B_R)} = t^{\theta-d/2}| h(\cdot)|_{H^{\theta}(B_{tR})},
\]
and the Minkowski’s integral inequality
\begin{equation}\label{eq-Lem6.3-09}
\begin{aligned}
     |\mathcal{D}^{\bm \alpha}g|_{H^{\theta}(B_R)}
    &\lesssim
    \int_0^1 t^{|\bm \alpha|}\bigl|\mathcal{D}^{\bm \alpha}[f](t \cdot)\bigr|_{H^{\theta}(B_R)}\dt
    \\&\lesssim 
    \int_{0}^{1} t^{|\bm \alpha|+\theta-d/2}|\mathcal{D}^{\bm \alpha}f|_{H^{\theta}(B_{tR})}\dt.
\end{aligned}
\end{equation}
For $|\bm \alpha| =\lfloor m  \rfloor$, the factor $|\bm \alpha|+\theta-d/2= m-d/2>-1$, therefore \eqref{eq-Lem6.3-09} gives
\begin{equation}\label{eq-Lem6.3-10}
    \sum_{|\bm \alpha|=\lfloor m\rfloor}|\mathcal{D}^{\alpha}g|_{H^{\theta}(B_R)}\lesssim \sum_{|\bm \alpha|=\lfloor m\rfloor}|\mathcal{D}^{\alpha}f|_{H^{\theta}(B_R)}\lesssim \|f\|_{H^{m}(B_R)}.
\end{equation}
Combining \eqref{eq-Lem6.3-08} and \eqref{eq-Lem6.3-10} together completes the proof of \eqref{eq-Lem6.3-04}.
\end{proof}

\section{Proof of Lemma~\ref{Lem6.5}}\label{proofLem6.5}
\begin{proof}[Proof of Lemma~\ref{Lem6.5}]
First, define the expansion operator $\mathcal{Q}_r$ for $r=1,2$ by
\begin{equation}\label{eq-Lem7.5-01}
\mathcal{Q}_{r}[f](\bm x)
:=
\sum_{|\bm \alpha|< r}
\frac{1}{(2\pi i)^{|\bm \alpha|}}
\mathcal{D}^{\bm \alpha}_{\bm 0}[f]\,
m_{\bm \alpha}(\bm x),
\end{equation}
where $m_{\bm \alpha}$ is defined in \eqref{Eq-6.1-02}. The residual of the expansion \eqref{eq-Lem7.5-01} satisfies 
\begin{equation}\label{eq-Lem7.5-02}
\mathcal{R}_r[f]
:=
f-\mathcal{Q}_r[f],
\qquad
\mathcal{D}^{\bm \alpha}_{\bm 0}\!\left[\mathcal{R}_r[f]\right]=0,
\quad
|\bm \alpha|<r.
\end{equation}
Then, by Lemma~\ref{Lem6.2}, for $-1\leq t+r\leq s+r$,
\begin{equation}\label{eq-Lem7.5-03}
\mathcal{R}_r[f]\in H^{t+r+l}
\quad\Longrightarrow\quad
\mathcal{V}\mathcal{R}_r[f]\in H^{t+r}.
\end{equation}
The definition of $\mathcal{Q}_r$ requires the existence of
$\mathcal{D}^{\bm \alpha}_{\bm 0}[f]$ for all $|\bm \alpha|<r$, it is well defined for
$f\in H^{d/2-1+r}\hookrightarrow W^{r-1,\infty}$. Moreover, $\mathcal{Q}_r[f]\in C^{\infty}$ since $m_{\bm \alpha}\in C^{\infty}$. 
Therefore, we obtain for $s-1\leq t\leq s$ (where $t+l+r\geq s+l-1+r>d/2-1+r$)
\begin{equation}\label{eq-Lem7.5-04}
    f \in H^{t+r+l} 
    \quad \Longrightarrow\quad 
    \left\{
    \begin{aligned}
    &\mathcal{Q}_r[f] \in C^{\infty}\\
    &\mathcal{R}_r[f] \in H^{t+r+l}   
    \end{aligned}
    \right.
    \quad \Longrightarrow\quad 
    \mathcal{V}\mathcal{R}_r[f] \in H^{t+r}
\end{equation}

Next, for every $|\bm \alpha|\leq 1$, we construct an iterative sequence
$(\Phi_{\bm \alpha}^{(k)},\Psi_{\bm \alpha}^{(k)},R_{\bm \alpha}^{(k)})$
satisfying
\begin{equation}\label{eq-Lem7.5-05}
    \bigl(\mathcal{T}_{\star}+\mathcal{V}\bigr)
    \Phi_{\bm \alpha}^{(k)}
    =
    \mathcal{V}\Psi_{\bm \alpha}^{(k)}
    +R_{\bm \alpha}^{(k)},
    \qquad k\in \mathbb{N},
\end{equation}
where $\mathcal{T}_{\star}:=\mathcal{T}+1$. We initialize the iteration by
\begin{equation}\label{eq-Lem7.5-05.1}
    \Phi_{\bm \alpha}^{(0)}
    =
    \Psi_{\bm \alpha}^{(0)}
    =
    \frac{m_{\bm \alpha}}{(2\pi i)^{|\bm \alpha|}}.
\end{equation}
Then $R_{\bm \alpha}^{(0)}=\mathcal{T}_{\star}\Phi_{\bm \alpha}^{(0)}\in C^{\infty}$, and
$\mathcal{D}^{\bm \alpha'}_{\bm 0}
\bigl[\Phi_{\bm \alpha}^{(0)}\bigr]
=
\delta_{\bm \alpha',\bm \alpha}$ for every
$|\bm \alpha'|\leq |\bm \alpha|.$
To successively eliminate
$\mathcal{V}\Psi_{\bm \alpha}^{(k)}$ term on the right-hand side of \eqref{eq-Lem7.5-05} while preserving $\mathcal{D}_{\bm 0}^{\bm \alpha'}[\Phi_{\bm \alpha}^{(k)}]$ for $|\bm \alpha'|\leq |\bm \alpha|$, we update $\Phi_{\bm \alpha}^{(k)}$ using the residual operator $\mathcal{R}_{r_k}$,
\begin{equation}\label{eq-Lem7.5-06}
    \Phi^{(k+1)}_{\bm \alpha}
    :=
    \Phi_{\bm \alpha}^{(k)}
    -
    \mathcal{R}_{r_k}
    \mathcal{T}_{\star}^{-1}
    \mathcal{V}
    \Psi_{\bm \alpha}^{(k)},
\end{equation}
where $r_k \in \{1,2\}$ is determined later.
If $r_j\geq |\bm \alpha|+1$ for every $0\leq j\leq k$, then by \eqref{eq-Lem7.5-02},
\begin{equation}\label{eq-Lem7.5-07}
\mathcal{D}^{\bm \alpha'}_{\bm 0}\bigl[\Phi_{\bm \alpha}^{(k+1)}\bigr] = \mathcal{D}^{\bm \alpha'}_{\bm 0}\bigl[\Phi_{\bm \alpha}^{(k)}\bigr]=\cdots=\mathcal{D}^{\bm \alpha'}_{\bm 0}\bigl[\Phi_{\bm \alpha}^{(0)}\bigr]=\delta_{\bm \alpha',\bm \alpha},\quad \forall\, |\bm \alpha'|\leq |\bm \alpha|.
\end{equation}
Applying $\mathcal{T}_{\star}+\mathcal{V}$ on both sides of \eqref{eq-Lem7.5-06} and requiring the resulting equation to retain the form of \eqref{eq-Lem7.5-05}, we update
\begin{equation}\label{eq-Lem7.5-08}
    \Psi_{\bm \alpha}^{(k+1)} = -\mathcal{R}_{r_k}\mathcal{T}_\star^{-1}\mathcal{V}\Psi_{\bm \alpha}^{(k)},\quad R_{\bm \alpha}^{(k+1)}= R^{(k)}_{\bm \alpha}+\mathcal{T}_{\star}\mathcal{Q}_{r_k}\mathcal{T}_{\star}^{-1}\mathcal{V}\Psi_{\bm \alpha}^{(k)}.
\end{equation}

Lastly, we claim that, for a suitable choice of $r_k$, there exists $k_0$ such that
\begin{equation}\label{eq-Lem7.5-clm}
\mathcal{V}\Psi_{\bm \alpha}^{(k_0)} 
+
R_{\bm \alpha}^{(k_0)}
\in H^{s+2},\qquad 
\mathcal{D}_{\bm 0}^{\bm \alpha'}[\Phi_{\bm \alpha}^{(k_0)}]
=
\delta_{\bm \alpha',\bm \alpha}, 
\quad
|\bm \alpha'|\leq |\bm \alpha|.
\end{equation}
Then the proof is completed using this claim by setting $\Phi_{\bm \alpha}:=\Phi_{\bm \alpha}^{(k_0)}$. Note that the second part of the claim is satisfied using \eqref{eq-Lem7.5-07} if $r_k \geq |\bm \alpha|+1$ for every $k\in\mathbb{N}$. To establish the first part of the claim, we use \eqref{eq-Lem7.5-08} to obtain
\begin{equation}\label{eq-Lem7.5-09}
    \Psi^{(k+1)}_{\bm \alpha}
    =
    -\mathcal{R}_{r_k}\mathcal{T}_{\star}^{-1}\mathcal{V}\Psi_{\bm \alpha}^{(k)}
    =
    (-1)^{k+1}
    \mathcal{R}_{r_k}
    \Bigl(
    \prod_{j=0}^{k-1}
    \mathcal{T}_{\star}^{-1}\mathcal{V}\mathcal{R}_{r_j}
    \Bigr)
    \mathcal{T}_{\star}^{-1}\mathcal{V}\Psi_{\bm \alpha}^{(0)}.
\end{equation}
By \eqref{eq-Lem7.5-04}, the operator
$\mathcal{T}_{\star}^{-1}\mathcal{V}\mathcal{R}_r$
maps $H^{t+r+l}$ into $H^{t+r+2}$ for
$s-1\leq t\leq s$.
Since $r_j\in \mathbb{N}$ is chosen from $|\bm \alpha|+1\leq r_j\leq 2$ in  \eqref{eq-Lem7.5-09}, 
$\mathcal{T}_{\star}^{-1}\mathcal{V}\mathcal{R}_{r_j}$
maps $H^{t'+l}$ into $H^{t'+2}$ for
$t'\in \cup_{r_j}[s+r_j-1,\,s+r_j]=[s+|\bm \alpha|,\,s+2]$. Iterating this bootstrap finitely many times, we obtain a $k_0\in\mathbb N$ and $r_j$ satisfies $|\bm \alpha|+1\leq r_j\leq 2$ such that the operator 
$\prod_{j=0}^{k_0-2}   \mathcal{T}_{\star}^{-1}\mathcal{V}\mathcal{R}_{r_j}$
maps $H^{s+|\bm \alpha|+l}$ to $H^{s+4}$. Note that by Lemma~\ref{Lem6.1} (iii), the initialization \eqref{eq-Lem7.5-05} satisfies
    \[
        \mathcal{V}\Phi_{\bm \alpha}^{(0)} = m_{\bm \alpha}V/(2\pi i)^{|\bm \alpha|} \in H^{s+|\bm \alpha|} 
        \quad \Longrightarrow \quad 
        \mathcal{T}_{\star}^{-1}\mathcal{V}\Phi_{\bm \alpha}^{(0)}\in H^{s+|\bm \alpha|+2}\hookrightarrow H^{s+|\bm \alpha|+l},
    \]
we have
\begin{equation}\label{eq-Lem7.5-10}
    \Bigl(
    \prod_{j=0}^{k_0-2}
    \mathcal{T}_{\star}^{-1}\mathcal{V}\mathcal{R}_{r_j}
    \Bigr)
    \mathcal{T}_{\star}^{-1}\mathcal{V}\Psi_{\bm \alpha}^{(0)} \in H^{s+4}.
\end{equation}
Since $H^{s+4}\hookrightarrow H^{s+2+l}$, choosing
$r_{k_0-1}=2$ and applying \eqref{eq-Lem7.5-04} once more yields $\mathcal{V}\Psi^{(k_0)}_{\bm \alpha}\in H^{s+2}$. Since each $r_j$ is chosen so that \eqref{eq-Lem7.5-04} applies, we have
\[
\mathcal{Q}_{r_j}\mathcal{T}_{\star}^{-1}\mathcal{V}\Psi^{(j)}_{\bm \alpha}
\in C^{\infty}.
\]
Hence \eqref{eq-Lem7.5-08} implies $R^{(k_0)}_{\bm \alpha}\in C^{\infty}$. Combined with $\mathcal{V}\Psi_{\bm \alpha}^{(k_0)}\in H^{s+2}$, this establishes the claim \eqref{eq-Lem7.5-clm}.
\end{proof}

\section{Step 2 in the proof of Lemma~\ref{Lem2.1}}
\label{proofthmAR1}
\begin{proof}[proof of \textbf{Step 2}]
$r=2\Rightarrow \psi^{\geq 2} \in H^{s+4}$.
    In \textbf{Step 1}, we derive that 
    \[ \|\psi^{\geq 1}\|_{H^{s+3}}\lesssim\|\psi\|_{L^2},\quad | \beta_{p,\bm 0}|\lesssim \|\psi\|_{L^2}.
    \]
    For $|\bm \alpha| = 1$, the coefficients $\beta_{p,\bm \alpha}$ are bounded by the  embedding $H^{s+3}\hookrightarrow W^{1,\infty}$
    \begin{equation}\label{Eq-lem2.1-15}
    \begin{aligned}
        |\beta_{p,\bm \alpha}|
        &=\bigl|\mathcal{D}^{\bm \alpha}_p[\psi^{\geq 1}+\sum_{q\neq p}\beta_{q,\bm 0}\Phi_{q,\bm 0}]\bigr|
        \leq \bigl\|\psi^{\geq 1}+\sum_{q\neq p}\beta_{q,\bm 0}\Phi_{q,\bm 0}^{\mathrm{G}}\bigr\|_{H^{s+3}}
        \lesssim \|\psi\|_{L^2}.
    \end{aligned}
    \end{equation}
     Since $\Phi_{p,\bm \alpha}\in PSH^{s+3,1}\hookrightarrow H^{s+3}$ for $|\bm \alpha|=1$. we use \eqref{Eq-lem2.1-15} to obtain
     \begin{equation}\label{Eq-lem2.1-16}
        \|\psi^{(1)}\|_{H^{s+3}} =\Bigl\| \sum_{p=1}^{n_p}\sum_{\bm |\alpha|  =1}\beta_{p,\bm \alpha}\Phi_{p,\bm \alpha}\Bigr\|_{H^{s+3}}\lesssim \|\psi\|_{L^2}.
    \end{equation}
    Consequently,
    \begin{equation}
        \|\psi^{\geq 2}\|_{H^{s+3}} =\|\psi^{\geq 1} - \psi^{(1)}\|_{H^{s+3}}\lesssim \|\psi\|_{L^2},
    \end{equation}
    which serves as the starting point of the bootstrap argument. 
    
    Let $t$ satisfies $s+1\leq t+2 \leq s+2$. Taking $H^{t+2}$ norm on both sides of \eqref{Eq-lem2.1-03} gives
    \begin{equation}\label{Eq-lem2.1-17}
        \| \psi^{\geq 2} \|_{H^{t+4}} \lesssim \sum_{p=1}^{n_p}\Bigl\|\mathcal{V}_p\bigl(\psi-\sum_{|\bm \alpha|\leq 1}\beta_{p,\bm \alpha}\Phi_{p,\bm \alpha}\bigr)\Bigr\|_{H^{t+2}}+\|R(\lambda)\|_{H^{t+2}}.
    \end{equation}
    The second term is bounded using \eqref{Eq-lem2.1-05} and \eqref{Eq-lem2.1-15} by
    \begin{equation}\label{Eq-lem2.1-18}
        \|R(\lambda)\|_{H^{t+2}} \lesssim \bigl(\lambda+\|V_0\|_{H^{s+2}}\bigr)\|\psi\|_{H^{t+2}} +\sum_{p=1}^{n_p}\sum_{|\bm \alpha|\leq 1} |\beta_{p,\bm \alpha}|\|R_{p,\bm \alpha}\|_{H^{t+2}}\lesssim \|\psi\|_{L^2}.
    \end{equation}
    To bound the first term, we localize the problem as in \textbf{Step 1}. By Lemma~\ref{Lem6.4}  with $\delta = r_m/2$, we have the decomposition for $|\bm \alpha|\leq 1$
    \[
    \Phi_{p,\bm \alpha} = \Phi_{p,\bm \alpha}^\mathrm{L} +\Phi_{p,\bm \alpha}^\mathrm{G},\quad\Phi_{p,\bm \alpha}^\mathrm{L}\in PSH^{s+2+|\bm \alpha|,2-|\bm \alpha|},\quad\Phi_{p,\bm \alpha}^\mathrm{G}\in H^{s+4}.
    \]
    Note that $B_p\cap B_q = \emptyset, \forall\, p\neq q$, we have  
    \[
        \psi(\bm x)-\sum_{|\bm \alpha|\leq 1}\beta_{p,\bm \alpha}\Phi_{p,\bm \alpha}(\bm x) = \psi^{\geq 2}(\bm x) +\sum_{q\neq p} \sum_{|\bm \alpha|\leq 1}\beta_{q,\bm \alpha}\Phi_{q,\bm \alpha}^{\mathrm{G}}(\bm x), \quad \forall\, \bm x\in B_p.
    \]
     Applying Lemma~\ref{Lem6.2} with $r=2$ to bound the $V_p^{L}$ term, we have 
     \begin{equation}\label{Eq-lem2.1-19}
       \begin{aligned}
            \Bigl\|\mathcal{V}_p\bigl(\psi-\sum_{|\bm \alpha|\leq 1}\beta_{p,\bm \alpha}\Phi_{p,\bm \alpha}\bigr)\Bigr\|_{H^{t+2}}
            &\lesssim  \|V_{p}^{\mathrm{G}}\|_{H^{s+2}}\|\psi-\sum_{|\bm \alpha|\leq 1}\beta_{p,\bm \alpha}\Phi_{p,\bm \alpha}\|_{H^{t+2}}\\
            &+ \|V_p^{\mathrm{L}}\|_{PSH^{s,2}}\Bigl\|\psi^{\geq 2}+\sum_{q\neq p}\sum_{|\bm \alpha|\leq 1}\beta_{q,\bm 0}\Phi_{q,\bm 0}^{\mathrm{G}}\Bigr\|_{H^{t+l+2}}\\
            &\lesssim \|\psi\|_{L^2}+\|\psi^{\geq 2}\|_{H^{t+l+2}}.
       \end{aligned}
    \end{equation}
    Combining \eqref{Eq-lem2.1-19},\eqref{Eq-lem2.1-18}, and \eqref{Eq-lem2.1-17} together, using same bootstrap argument used in \textbf{Step 1} yields $\|\psi^{\geq 2}\|_{H^{s+4}}\lesssim \|\psi\|_{L^2}$. Combining this with \eqref{Eq-lem2.1-06} and \eqref{Eq-lem2.1-16} completes the proof of Lemma~\ref{Lem2.1}.
\end{proof}

\end{appendices}



\end{document}